\documentclass[oneside]{amsart}
\usepackage{graphicx} 
\usepackage{comment}
\usepackage{amsmath}
\usepackage{relsize}
\usepackage{amsthm}
\usepackage{color,pxfonts,fix-cm}
\usepackage{latexsym}
\usepackage[mathletters]{ucs}
\DeclareUnicodeCharacter{32}{$\ $}
\usepackage[T1]{fontenc}
\usepackage[utf8x]{inputenc}
\usepackage{pict2e}
\usepackage{wasysym}
\usepackage[english]{babel}
\usepackage{tikz}
\usepackage{tikz-cd}
\usepackage[a4paper]{geometry}
\usepackage{parskip}
\usepackage[numbers,sort&compress]{natbib}
\usepackage{enumitem}

\theoremstyle{plain}
\newtheorem{theorem}{Theorem}[section]
\newtheorem{lemma}[theorem]{Lemma}
\newtheorem{proposition}[theorem]{Proposition}
\newtheorem{corollary}[theorem]{Corollary}
\theoremstyle{definition}
\newtheorem{definition}[theorem]{Definition}
\theoremstyle{remark}
\newtheorem{remark}[theorem]{Remark}

\makeatletter
\renewcommand\subsubsection{\@startsection{subsubsection}{3}%
  \z@{.5\linespacing\@plus.7\linespacing}{.3\linespacing}%
  {\normalfont\bfseries}}
\makeatother

\title{Rigidity of self-maps of $V_{n,2}$ and manifolds
tangentially homotopy equivalent to $V_{n,2} \times S^k$}
\author{Sagnik Biswas}
\date{}

\begin{document}

\begin{abstract}
We study two problems concerning the Stiefel manifolds $V_{n,2}$ and their products with spheres.
First, we address a rigidity problem: we determine, for most values of~$n$, all self-maps of $V_{n,2}$ that are homotopic to an almost diffeomorphism.
Second, we classify smooth closed manifolds tangentially homotopy equivalent to $V_{n,2} \times S^k$ up to almost diffeomorphism, for $k = 3, 5$ or $7 \leq k, k \neq 2^i - 2 \ \text{and} \ Dim(V_{n,2} \times S^k) \neq 2^i - 2$.
Our method is to find explicit inverses in the structure set via normal invariants of specific tangential homotopy equivalences. In favourable cases --- notably $V_{12,2} \times S^3$, $V_{16,2} \times S^3$, $V_{10,2} \times S^5$ --- the classification is complete: every such manifold is almost diffeomorphic to $V_{n,2} \mathbin{\#} \Sigma \times S^k$ for some exotic sphere $\Sigma$.
In the general case, we identify inverses for a large subgroup of $\operatorname{Im}(\eta)$ and provide a reasonable direction for the remainder.
\end{abstract}

\maketitle

\section{Introduction}
Classifying smooth manifolds up to diffeomorphism is the fundamental question in Topology. It was proved that this question is unsolvable in its full form. Some restriction to the problem is therefore required. One approach is to fix the homotopy type of the manifold and then classify all manifolds up to diffeomorphism within the fixed homotopy type. This problem is highly non-trivial in itself. However, there is a framework with which one can attack it, namely the approach using the surgery exact sequence. A natural and more tractable version of the problem is to classify all smooth manifolds up to almost diffeomorphism in a fixed tangential homotopy type. While still a difficult problem, it is considerably more approachable than the original one.

A necessary condition for two manifolds of the same homotopy type to be almost diffeomorphic is that they are tangentially homotopy equivalent. In general, this condition is not sufficient to guarantee almost diffeomorphism, as there are examples of manifolds that are tangentially homotopy equivalent but not homeomorphic~\cite{ottenberger,crowley:7mflds}. However, in the following cases~\cite{desapio} tangential homotopy equivalence is shown to be sufficient:

\begin{enumerate}
\item We are classifying in the homotopy type of an $(n-1)$-connected $2n$-manifold.
\item We are classifying in the homotopy type of an $(n-1)$-connected $(2n+1)$-manifold which is stably parallelizable, $H_n$ is cyclic, and $n$ is even.
\end{enumerate}

In this paper, we fix our manifold to be $M_{n,k} = V_{n,2} \times S^k$, where $n$ and $k$ vary. Our objective is to investigate how manifolds in the tangential homotopy type of $M_{n,k}$ look. In doing so, we show that in many cases all manifolds tangentially homotopy equivalent to $M_{n,k}$ are homeomorphic to $M_{n,k}$.

We also investigate the rigidity problem for the manifolds $V_{n,2}$: do there exist self-maps on these manifolds that can be homotoped to an almost diffeomorphism? What are necessary and sufficient conditions for a self-map to be homotopic to an almost diffeomorphism? Such maps must be self-homotopy equivalences, so this is equivalent to asking the same question for self-homotopy equivalences rather than arbitrary self-maps. For an infinite collection of $V_{n,2}$, we completely classify all such self-homotopy equivalences.

The main results of this paper are the following.

\noindent\textbf{Theorem 4.2} (Rigidity for $V_{2n+1,2}$, \S4).
Let $P_g$ be a pinch map of $V_{2n+1,2}$ where $g\colon S^{4n-1}\to S^{2n-1}\cup_2 e^{2n}$.
\begin{enumerate}[label=\arabic*)]
\item When $2n\equiv 4,6\pmod{8}$: $P_g$ is homotopic to an almost diffeomorphism if and only if
  $\{g\}=i^*\{\alpha\}$ for some $\alpha\in\pi^{s}_{4n-1}(S^{2n-1})$ with $\{\alpha\}$ divisible by~$2$.
\item When $2n\equiv 0\pmod{8}$: $P_g$ is homotopic to an almost diffeomorphism if and only if
  $\{g\}=i^*\{\alpha\}$ and either $HD\{\alpha\}$ or $HD\{\alpha\}-J(\xi)$ is divisible by~$2$.
\end{enumerate}

\noindent\textbf{Theorem 4.4} (Rigidity for $V_{2n+2,2}$, \S4).
Let $g=\alpha\vee\beta\colon S^{4n+1}\to S^{2n}\vee S^{2n+1}$ and $P_g$ the corresponding pinch map of $V_{2n+2,2}$.
Then $P_g$ is homotopic to an almost diffeomorphism if and only if $\{\alpha\},\{\beta\}\in\operatorname{Im}\,J$.

\noindent\textbf{Corollary 5.3} (Classification for $V_{2n+1,2}\times S^k$, \S5.A).
Let $V=V_{2n+1,2}$ and $k=3,5$ or $7\le k$, $k\ne 2^i-2$ and, $Dim(V_{n,2} \times S^k) \neq 2^i - 2$.
If $\Omega_1$ and $\Omega_2$ is surjective onto $\operatorname{Im}(\tau_*)$ ,
then every smooth closed manifold tangentially homotopy equivalent to $V\times S^k$
is almost diffeomorphic to $(V\mathbin{\#}\Sigma)\times\Sigma^k$ for some $\Sigma\in\Theta_{4n-1}$, $\Sigma^k\in\Theta_k$.

\noindent\textbf{Corollary 5.6} (Classification for $V_{2n+2,2}\times S^k$, \S5.B).
Let $V=V_{2n+2,2}$ and $k=3,5$ or $7\le k$, $k\ne 2^i-2$ and $Dim(V_{n,2} \times S^k) \neq 2^i - 2$.
If $\Psi_1$ and $\Psi_2$ are surjective onto $Im(\tau_*)$, then every smooth closed manifold
tangentially homotopy equivalent to $V\times S^k$ is almost diffeomorphic to $(V\mathbin{\#}\Sigma)\times\Sigma^k$.
In particular this holds for $V_{12,2}\times S^3$, $V_{16,2}\times S^3$ and $V_{10,2}\times S^5$.

\noindent\textbf{Organisation of the paper.}
Section~2 establishes the necessary background: the surgery exact sequence and normal invariants, the normal invariant criterion for almost diffeomorphisms, Spanier--Whitehead duality, and Crowley's framework for computing normal invariants of pinch maps.
In Section~3 we use this framework to explicitly compute the normal invariants of self-homotopy equivalences of $V_{n,2}$, treating the families $V_{2n+2,2}$ (\S3.1) and $V_{2n+1,2}$ (\S3.2) in turn.
Section~4 applies these computations to a rigidity problem: we determine, in most cases completely, all self-maps of $V_{n,2}$ that are homotopic to an almost diffeomorphism --- not only the pinch maps. Theorem~4.2 settles this for $V_{2n+1,2}$ and Theorem~4.4 for $V_{2n+2,2}$.
Section~5 addresses the classification of smooth closed manifolds tangentially homotopy equivalent to $V_{n,2} \times S^k$ up to almost diffeomorphism. Our approach is to find explicit inverses of elements in $\operatorname{Im}(\eta) \subset [V \times S^k, G/O]$ via normal invariants of specific tangential homotopy equivalences. In favourable cases we find inverses for all of $\operatorname{Im}(\eta)$, yielding a complete classification (Corollaries~5.3 and~5.6). In the general case we find inverses for a large subgroup of $\operatorname{Im}(\eta)$ and provide a possible way forward for approaching the remainder (Theorem~5.1 and Remark~5.4). Section~5.1 treats $V = V_{2n+1,2}$ and Section~5.2 treats $V = V_{2n+2,2}$.

\section{Preliminaries}

\subsection{Surgery exact sequence and normal invariants}

Throughout, $M^m$ denotes a smooth, closed, oriented, simply connected manifold of dimension $m \geq 5$.

\subsubsection*{Structure set and normal invariants}
We use the standard notation: $G$, $SG$ for the stable monoid of self-homotopy equivalences of spheres and its degree-$1$ component; $O$, $SO$ for the stable orthogonal groups; and $G/O$ for the homotopy fibre of $O \to G$, which classifies stable vector bundles with a fibre homotopy trivialisation of the associated sphere bundle. We refer to~\cite{ranicki} for details.

The \emph{structure set} $hS(M)$ is the set of $h$-cobordism classes of homotopy equivalences $f : N \xrightarrow{\simeq} M$~\cite{browder,wall}. The \emph{normal invariant map} $\eta : hS(M) \to [M, G/O]$ assigns to $f$ the classifying map of the stable bundle $(f^{-1})^*\nu_N - \nu_M$ together with its canonical fibre homotopy trivialisation~\cite[\S 3]{ranicki}. The set of normal invariants $\mathcal{N}(M) \cong [M, G/O]$~\cite[Proposition 9.43]{ranicki}.

\subsubsection*{Surgery exact sequence}
For a simply connected manifold $M^m$, $m \geq 5$, the \emph{surgery exact sequence}~\cite{browder,wall} is:
\[ \cdots \to
L_{m+1}(\mathbb{Z}) \xrightarrow{\;\partial\;} hS(M) \xrightarrow{\;\eta\;} [M,\, G/O] \xrightarrow{\;\sigma\;} L_m(\mathbb{Z})
\]
Here $\sigma$ is the surgery obstruction and the Wall groups $L_n(\mathbb{Z})$ are the standard $4$-periodic groups.

\subsubsection*{Tangential structure set and tangential surgery exact sequence}
A \emph{tangential homotopy equivalence} $f : N \xrightarrow{\simeq} M$ is a homotopy equivalence such that $f^* \nu_M \cong \nu_N$. The \emph{tangential structure set} $hS^t(M) \stackrel{\mathrm{def}}{=} \{ (f,b) \mid f:N \to M,\, b: \nu_N \to \nu_M \}/{\simeq}$. Here $f$ is a tangential homotopy equivalence and $b$ is a bundle map covering $f$ that is a fibrewise isomorphism.

To define the tangential normal invariant map $\eta^t$, we need the infinite loop space $QS^0$ and its additive structure. We follow~\cite[\S 6]{crowley:kervaire}. Define $QS^0 := \operatorname{colim}_{N} \, \Omega^N S^N$, with path components $(QS^0)_k$ indexed by degree. The loop-space multiplication $* : QS^0 \times QS^0 \to QS^0$ adds degrees on $\pi_0$ and makes $[X, QS^0]$ into an abelian group for any finite CW complex $X$. All path components are homotopy equivalent via translation; in particular, the $[1]*$ action provides a bijection:
\[
 [1]* \;:\; [X,\, (QS^0)_0] \;\xrightarrow{\;\sim\;}\; [X,\, (QS^0)_1] = [X,SG].
\]
Given a tangential homotopy equivalence $(f,b) \in hS^t(M)$, the Pontryagin--Thom construction yields a Thom-space level map $\operatorname{Th}(b) : \operatorname{Th}(\nu_N) \to \operatorname{Th}(\nu_M)$. Composing with the collapse map $S^{m+K} \to \operatorname{Th}(\nu_N)$ and taking the Spanier--Whitehead dual gives a stable cohomotopy element $\tilde{\eta}^t(f,b) \in \{M_+, S^0\}_*$. Via the loop-suspension adjunction and the pointed-to-free isomorphism, this lands in the degree-one component of $QS^0$, defining the \emph{tangential normal invariant}~\cite{madsen}:
\[
\eta^t(f, b) \;\in\; [M,\, (QS^0)_1] \;\cong\; [M,\, SG].
\]

These fit into the \emph{tangential surgery exact sequence} :
\[
L_{m+1}(\mathbb{Z}) \xrightarrow{\;\partial\;} hS^t(M) \xrightarrow{\;\eta^t\;} [M,\, SG] \xrightarrow{\;\sigma\;} L_m(\mathbb{Z}).
\]

\subsubsection*{Relating the two sequences}
There is a fibration $SO \to SG \xrightarrow{\tau} G/O$, inducing a map $\tau_* : [M, SG] \to [M, G/O]$. This fits into a map of exact sequences:
\[
\begin{tikzcd}[row sep=2em, column sep=2.5em]
L_{m+1}(\mathbb{Z}) \arrow[r,"\partial"] \arrow[d,"="]
& hS^t(M) \arrow[r,"\eta^t"] \arrow[d,"F"]
& {[M, SG]} \arrow[r,"\sigma"] \arrow[d,"\tau_*"]
& L_m(\mathbb{Z}) \arrow[d,"="] \\
L_{m+1}(\mathbb{Z}) \arrow[r,"\partial"]
& hS(M) \arrow[r,"\eta"]
& {[M, G/O]} \arrow[r,"\sigma"]
& L_m(\mathbb{Z})
\end{tikzcd}
\]
In particular, $\tau_*(\eta^t(f,b)) = \eta(f)$, and $\operatorname{Im}(\tau_*) \subset [M, G/O]$ consists precisely of the normal invariants of tangential homotopy equivalences. Here $F$ is the forgetful map.

\subsubsection*{Composition formula for normal invariants}
Let $N \xrightarrow{f} P \xrightarrow{g} M$ be compositions of homotopy equivalences. Madsen's composition formula~\cite[\S 2.5--2.6]{madsen} gives:
\[
\eta(g \circ f) \;=\; \eta(g) \;+\; (g^{-1})^{*}\,\eta(f) \;\;\in\; [M,\; G/O].
\]
The same formula holds for tangential normal invariants $\eta^t$ with $SG$ in place of $G/O$.

\subsection{Normal invariant criterion for almost diffeomorphisms}

\subsubsection*{Almost diffeomorphisms}
A homeomorphism $f : M_1 \to M_2$ between closed smooth $m$-manifolds is called an \emph{almost diffeomorphism} if there exist points $p \in M_1$, $q \in M_2$ such that the restriction
\[
f|_{M_1 \setminus \{p\}} \;:\; M_1 \setminus \{p\} \;\xrightarrow{\;\cong\;}\; M_2 \setminus \{q\}
\]
is a diffeomorphism. Two manifolds $M_1$ and $M_2$ are \emph{almost diffeomorphic} if such a map exists.

\subsubsection*{The normal invariant criterion for almost diffeomorphism}

\begin{proposition}
\label{prop:cstar}
Let $M^m$ be a smooth, closed, oriented, simply connected manifold with $m \geq 5$, and let $\bar{f} : M \xrightarrow{\simeq} M$ be a homotopy equivalence. Then $\bar{f}$ is homotopic to an almost diffeomorphism if and only if
\[
\eta(\bar{f}) \;\in\; \operatorname{im}\bigl(c^* : [S^m,\, G/O] \to [M,\, G/O]\bigr).
\]
\end{proposition}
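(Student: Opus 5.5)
Both directions go through the surgery exact sequence together with the identification of almost diffeomorphisms with connected sum by homotopy spheres. Recall that $\Theta_m \cong hS(S^m)$ for $m\ge 5$, and that the normal invariant of a homotopy sphere $\Sigma \to S^m$ lies in $[S^m,G/O]=\pi_m(G/O)$. For a homotopy sphere $\Sigma$ there is a standard homeomorphism $h_\Sigma: M\#\Sigma \to M$. It is the identity away from a disc and a radial extension on the disc, so it is a diffeomorphism off one point, i.e.\ an almost diffeomorphism. The key fact I will establish is that
\[
\eta(h_\Sigma\colon M\#\Sigma\to M) \;=\; c^*\eta(\Sigma\to S^m),
\]
where $c\colon M\to S^m$ is the collapse of the complement of a disc. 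This holds by naturality of normal invariants under the pinch: the normal data of $h_\Sigma$ is trivial outside the disc, and on the disc it is that of $\Sigma$.

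For the direction ``$\Rightarrow$'', suppose $\bar f\simeq g$ with $g$ an almost diffeomorphism, diffeomorphic off $p\mapsto q$. Removing small discs, $g$ restricts to a diffeomorphism $M\setminus D\to M\setminus D'$ on the boundary sphere, and the complement disc is glued by some diffeomorphism of $S^{m-1}$. Hence $g$ factors as $h_\Sigma\circ\phi$, where $\phi\colon M\to M\#\Sigma$ is a diffeomorphism and $\Sigma$ is the twisted sphere of that gluing. Since diffeomorphisms have trivial normal invariant, Madsen's composition formula gives
\[
\eta(\bar f)=\eta(h_\Sigma)+(h_\Sigma^{-1})^*\eta(\phi)=\eta(h_\Sigma)=c^*\eta(\Sigma)\in\operatorname{im}c^*.
\]
(Homotopic maps have equal normal invariants.)

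For the direction ``$\Leftarrow$'', suppose $\eta(\bar f)=c^*x$. The surgery obstruction of $c^*x$ equals that of $x$, because $\sigma$ is natural for degree-one collapses: both are computed from the top cell, i.e.\ by the Kervaire/signature formula evaluated on the fundamental class. This obstruction vanishes, since $c^*x=\eta(\bar f)$ is realized. In $hS(S^m)$ the element $x$ is therefore realized by some $\Sigma$, so $\eta(\bar f)=\eta(h_\Sigma)$. By exactness at $hS(M)$, $\bar f$ and $h_\Sigma$ then differ by the action of $L_{m+1}(\mathbb Z)$. The main obstacle is to show that this action is absorbed by homotopy spheres. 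The plan is to use the fact that the $L_{m+1}$-action on $hS(M)$ is realized by connected sum with the Milnor/Kervaire plumbing homotopy spheres, i.e.\ it factors through $bP_{m+1}\subset\Theta_m$ acting by $\#$. Then $[\bar f]=[h_{\Sigma\#\Sigma'}]$ in $hS(M)$. By the $s$-cobordism theorem (simply connected, $m\ge5$), $\bar f\colon M\to M$ and $h_{\Sigma''}\colon M\#\Sigma''\to M$ being equal in $hS(M)$ yields a diffeomorphism $\psi\colon M\to M\#\Sigma''$ with $h_{\Sigma''}\circ\psi\simeq\bar f$. This composite is an almost diffeomorphism, which proves the claim.
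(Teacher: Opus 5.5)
Your argument is correct and is essentially the paper's. For ($\Rightarrow$) you write the almost diffeomorphism as a diffeomorphism followed by $h_\Sigma$ and use $\eta(h_\Sigma)=c^*\eta(\Sigma)$; the factorisation holds up to an Alexander-trick isotopy, which is how the paper uses Crowley's Theorem~3.1. For ($\Leftarrow$) you realise the class by a homotopy sphere, absorb the $L_{m+1}(\mathbb{Z})$-ambiguity into $bP_{m+1}$, and conclude with the $h$-cobordism theorem. Your check that $\sigma(x)=\sigma(c^*x)=\sigma(\eta(\bar f))=0$ is more careful than the paper, which instead asserts that $\eta$ is surjective for $S^m$. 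That assertion is false in general, e.g.\ when $m\equiv 0 \pmod 4$, where $\sigma\colon \pi_m(G/O)\to L_m(\mathbb{Z})$ is nonzero.
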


\begin{proof}
$(\Rightarrow)$\; Suppose $\bar{f}$ is homotopic to an almost diffeomorphism $f$. By~\cite[Theorem~3.1]{crowley:7mflds}, there is a diffeomorphism $f' : M \mathbin{\#} \Sigma \xrightarrow{\cong} M$ such that $f' = f$ on $M \setminus D$. The map $f \# h_\Sigma \colon M \mathbin{\#} \Sigma \to M$ coincides with $f' $ on $M \setminus D$. Since both are homeomorphisms agreeing outside a disc, they are topologically isotopic~\cite{alexander}. Hence in $hS(M)$ we have $[M \# \Sigma, f \# h_\Sigma] = [M \# \Sigma, f']$. This gives $0 = \eta(f') = \eta(f \# h_\Sigma) = \eta(f) + c^*(\eta([\Sigma]))$, so $\eta(f) \in \operatorname{Im}\, c^*$.

\medskip
$(\Leftarrow)$\; Let $\eta(\bar{f}) \in \operatorname{Im}\, c^*$. Then $\eta(\bar{f}) = c^*\eta([\Sigma]) = \eta(h_\Sigma)$ (using surjectivity of the normal invariant map for $S^m$). Hence $[\bar{f}] = [h_{\Sigma \# \Sigma'}] \in hS(M)$, which yields a homotopy-commutative diagram:
\[
\begin{tikzcd}[column sep=3em]
M \arrow[r, "f"] \arrow[d, "\phi^{-1}"', "\cong"] & M \\
M \mathbin{\#} \Sigma \mathbin{\#} \Sigma' \arrow[ur, "h_{\Sigma \mathbin{\#} \Sigma'}"'] &
\end{tikzcd}
\]
The lower composed map is an almost diffeomorphism, so $\bar{f}$ is homotopic to an almost diffeomorphism.
\end{proof}

\subsection{Spanier--Whitehead duality}

We recall the basic setup of Spanier--Whitehead duality; see~\cite[I.4]{browder} and~\cite{funcspaces} for details. Two finite pointed CW complexes $X$ and $Y$ are \emph{$n$-dual} if there exists a duality map $\mu : S^n \to X \wedge Y$ inducing slant-product isomorphisms $\widetilde{H}^r(X) \xrightarrow{\cong} \widetilde{H}_{n-r}(Y)$ for all $r$. The \emph{$n$-dual} of $X$, denoted $D_n X$, is unique up to stable equivalence. Duality extends to maps and reverses their direction: for finite pointed CW complexes $X$ and $Y$ there is a natural isomorphism
\[
[\Sigma^k X,\, \Sigma^k Y] \;\cong\; [\Sigma^l D_n Y,\, \Sigma^l D_n X]
\]
in the stable range. We use several standard properties of the duality functor~\cite[Thm~6.1, 6.5, Cor~6.7]{funcspaces} (duals of wedges, suspensions, cofibre sequences, self-duality of stable maps between spheres, and the homology--cohomology interchange); these will be recalled as needed in \S\,3.

\subsection{Normal invariants of pinch maps}

\providecommand{\Dhat}{\widehat{\vphantom{D^{\ast\ast}}D}}

We summarise the framework of \textbf{\cite[\S 6--7]{crowley:kervaire}} for computing the tangential normal invariants of pinch maps. Throughout, $X^m$ is a smooth, closed, oriented, simply connected manifold with $m \geq 5$, embedded in $\mathbb{R}^{m+K}$ for $K$ large.

\subsubsection*{Pinch maps}
Let $X^m$ have CW-structure with top cell $e^m$ attached by $\rho : S^{m-1} \to X^{(m-1)}$. Given a map $x : S^m \to X$ factoring through the lower skeleton (i.e.\ $x$ factors as $S^m \xrightarrow{y} Y \xrightarrow{t} X$ for some sub-skeleton or submanifold $Y \subset X$), the \emph{pinch map} $p(x) : X \to X$ is defined as:
\[
p(x) \;:\; X \;\xrightarrow{\;q\;}\; X \vee S^m \;\xrightarrow{\;\mathrm{Id} \vee x\;}\; X \vee X \;\xrightarrow{\;\nabla\;}\; X,
\]
where $q$ is the quotient map collapsing the boundary of the top cell to a point, and $\nabla$ is the fold map.  Since $x$ factors through the lower skeleton, $p(x)$ induces the identity on all homology groups and hence is a degree-one homotopy equivalence. Moreover, the tangent bundle is preserved: if $x$ pulls back the stable normal bundle $\nu_X$ trivially (which happens whenever $X$ is stably parallelizable), then $p(x)$ is a \emph{tangential homotopy equivalence}~\cite[\S 7]{crowley:kervaire}.

\subsubsection*{Normal bordism and the tangential normal invariant of a sum}
A normal map over $X$ defines a bordism element $[P, h, b] \in \Omega_m(X, \nu_X)$, which the Pontryagin--Thom construction identifies with a class $\mu_X([P,h,b]) = \rho_b \in \pi_{m+K}(\operatorname{Th}(\nu_X))$. The sum $[X, \mathrm{Id}, \mathrm{Id}] + [P, h, b]$ corresponds geometrically to a pinch map when $[P, h, b]$ arises from a map $x : S^m \to X$~\cite[Proof of 7.4]{crowley:kervaire}. The Thom space $\operatorname{Th}(\nu_X)$ is $S$-dual to $X_+$~\cite[I.4]{browder}, giving an isomorphism
\[
\Dhat \;:\; \pi_{m+K}(\operatorname{Th}(\nu_X)) \;\xrightarrow{\;\cong\;}\; \{X_+,\, S^0\}_* \;\cong\; [X_+,\, (QS^0)]_* \;\cong\; [X,\, QS^0].
\]

\begin{lemma}[{\cite[Lemma~6.5]{crowley:kervaire}}]
\label{lem:eta-connected-sum}
Let $[P, h, b] \in \Omega_m(X, \nu_X)_0$ be a degree-0 normal bordism element with Pontryagin--Thom class $\rho_b = \mu_X([P, h, b]) \in \pi_{m+K}(\operatorname{Th}(\nu_X))_0$. Then the tangential normal invariant of the ``connected sum with identity'' is:
\[
\eta^t\bigl([X,\, \mathrm{Id},\, \mathrm{Id}] + [P,\, h,\, b]\bigr) \;=\; [1] * \Dhat(\rho_b) \;\in\; [X,\, SG].
\]
This translates the tangential normal invariant of a normal degree-$0$ map inside $[X,(\Omega^\infty S^\infty)_0]$ to the normal invariant of the corresponding pinch map inside $[X,SG]$.
\end{lemma}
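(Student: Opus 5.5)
The plan is to compare two elements of $[X,SG]$: the tangential normal invariant of the normal map $[X,\mathrm{Id},\mathrm{Id}]+[P,h,b]$, and the translate $[1]*\Dhat(\rho_b)$ of the Spanier--Whitehead dual of its Pontryagin--Thom class. I will do this by unwinding the definition of $\eta^t$ recalled in \S2.1, namely the Thom-space map, composed with the collapse map and then dualised. First I identify the Pontryagin--Thom class of the sum. Since $\mu_X$ is additive on $\Omega_m(X,\nu_X)$, we have
\[
\mu_X([X,\mathrm{Id},\mathrm{Id}]+[P,h,b]) = c_X + \rho_b \in \pi_{m+K}(\operatorname{Th}(\nu_X)),
\]
where $c_X : S^{m+K}\to \operatorname{Th}(\nu_X)$ is the collapse map of the identity. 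The element $c_X$ has degree one on the top cell, while $\rho_b$ has degree $0$ by hypothesis.

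Next I apply the duality isomorphism $\Dhat$. It is a group isomorphism from $\pi_{m+K}(\operatorname{Th}(\nu_X))$ to $\{X_+,S^0\}$, and the group structure on the target corresponds to the loop product $*$ on $[X,QS^0]$; this is standard, since $*$ is induced by the infinite loop structure and hence agrees with the stable sum. Thus
\[
\Dhat(c_X+\rho_b)=\Dhat(c_X)*\Dhat(\rho_b).
\]
The key input is Atiyah duality, in the form of the Browder reference: $c_X$ is exactly the duality map exhibiting $\operatorname{Th}(\nu_X)$ as $S$-dual to $X_+$. Consequently $\Dhat(c_X)$ is the dual of the identity, which is the unit class $[1]$, i.e.\ the constant map $X\to (QS^0)_1$ obtained from the projection $X_+\to S^0$. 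Degrees add under $*$, so the product $[1]*\Dhat(\rho_b)$ lies in the degree-one component $(QS^0)_1=SG$, as claimed.

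Finally I must check that $\Dhat$ of the Pontryagin--Thom class of an arbitrary degree-one normal map $(N,f,b)$ with $f$ a homotopy equivalence equals $\eta^t(f,b)$. This holds because the definition of $\eta^t$ is the dual of $\operatorname{Th}(b)\circ c_N$, and $\operatorname{Th}(b)\circ c_N$ is precisely $\mu_X([N,f,b])$ by the Pontryagin--Thom construction. The remaining issue is to keep track of the identification between reduced classes $\{X_+,S^0\}$ and unbased maps $X\to QS^0$, and to make sure the degree-zero component is translated by $[1]$ rather than by some other unit. I expect the main obstacle to be this bookkeeping: verifying that the isomorphism $\Dhat$ is additive with respect to $*$, and that $c_X$ dualises exactly to $[1]$ and not to another degree-one unit. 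I would settle both points by naturality of $S$-duality applied to the top-cell collapse $X_+\to S^0$.
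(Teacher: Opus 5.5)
Your argument is correct. The paper gives no proof of its own here; it imports the statement from \cite[Lemma~6.5]{crowley:kervaire}. With the definition of $\eta^t$ in \S2.1 (the $S$-dual of $\operatorname{Th}(b)\circ c_N$, i.e.\ $\widehat{D}\circ\mu_X$ on degree-one classes), your unwinding is the natural proof. Three facts suffice: $\mu_X$ is additive, $\widehat{D}$ is additive once $[X,QS^0]$ carries the loop sum $*$, and $\widehat{D}(c_X)=[1]$.

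Two points should be tightened.

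\emph{The identification of $\widehat{D}(c_X)$.} $c_X$ is not itself the duality map, and $\widehat{D}(c_X)$ is not ``the dual of the identity''. The Atiyah duality map is $\Delta\circ c_X\colon S^{m+K}\to\operatorname{Th}(\nu_X)\wedge X_+$, where $\Delta$ is the Thom diagonal. Let $\varepsilon\colon X_+\to S^0$ be the augmentation induced by $X\to\mathrm{pt}$ (not a top-cell collapse). Since $(1\wedge\varepsilon)\circ\Delta=\mathrm{id}_{\operatorname{Th}(\nu_X)}$, the element of $\{X_+,S^0\}$ corresponding to $c_X$ is exactly $\varepsilon$. Under $\{X_+,S^0\}\cong[X_+,QS^0]_*\cong[X,QS^0]$, $\varepsilon$ is the constant map at $1\in(QS^0)_1$. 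This is the precise form of the naturality you intended to invoke.

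\emph{The domain of $\eta^t$.} The class $[X,\mathrm{Id},\mathrm{Id}]+[P,h,b]$ is represented by $X\sqcup P\to X$, which is not a homotopy equivalence. So your last paragraph, restricted to $f$ a homotopy equivalence, does not literally apply to it. What you need, and what the statement presupposes, is the following. The formula $\widehat{D}(\operatorname{Th}(b)\circ c_N)$ makes sense for every degree-one tangential normal map, and it depends only on the class in $\Omega_m(X,\nu_X)_1$ because it factors through $\mu_X$. Hence $\eta^t=\widehat{D}\circ\mu_X$ on all of $\Omega_m(X,\nu_X)_1$. For the pinch maps used later, the same class is also represented by the homotopy equivalence $p(x)$, so the two readings agree.

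If $\eta^t$ were instead defined intrinsically rather than via $S$-duality, identifying it with $\widehat{D}\circ\mu_X$ would be the real content of the lemma. With this paper's definition that step is tautological.
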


\subsubsection*{Setup for the computation of normal invariant of pinch maps}
We now specialise to the situation where the degree-0 normal bordism element comes from a map through a submanifold. Let $t : Y^{m-l} \hookrightarrow X^m$ be a codimension-$l$ submanifold ($l > 0$) with normal bundle $\nu_t$, and let $t^! : X_+ \to \operatorname{Th}(\nu_t)$ be the Pontryagin--Thom collapse (umkehr) map for the embedding $t$ and $+$ going to basepoint.

We suppose that we are given a map $y : S^m \to Y$ such that the composite $x = t \circ y$,
\[
x \;:\; S^m \;\xrightarrow{\;y\;}\; Y \;\xrightarrow{\;t\;}\; X,
\]
pulls back $\nu_X$ trivially. Since $\nu_{S^m}$ is trivial, this is equivalent to assuming the existence of a bundle map $b_y : \nu_{S^m} \to t^*(\nu_X)$. If $b_t : t^*(\nu_X) \to \nu_X$ is the canonical bundle map, we set $b_x := b_t \circ b_y$ and consider the following diagram of bundle maps:
\[
\begin{tikzcd}[row sep=2.5em, column sep=3em]
\nu_{S^m}
  \arrow[r, "b_y"]
  \arrow[d]
& t^*(\nu_X)
  \arrow[r, "b_t"]
  \arrow[d]
& \nu_X
  \arrow[d]
\\
S^m
  \arrow[r, "y"]
& Y
  \arrow[r, "t"]
& X
\end{tikzcd}
\]
The homotopy class $\rho_x := \mu_X([S^m, x, b_x])$ is then given as the composite of Thom-space maps:
\begin{equation}\label{eq:rhox}\tag{2.1}
\rho_x \;=\;\;:\; S^{m+K} \;\xrightarrow{\;\rho_y\;}\; \operatorname{Th}\bigl(t^*(\nu_X)\bigr) \;\xrightarrow{\;\operatorname{Th}(b_t)\;}\; \operatorname{Th}(\nu_X),
\end{equation}
where $\rho_y$ is the homotopy class $\operatorname{Th}(b_y)_*(\rho_{S^m}) \in \pi_{m+K}(\operatorname{Th}(t^*(\nu_X)))$, and $\operatorname{Th}(b_t)$, $\operatorname{Th}(b_y)$ denote the induced maps of Thom spaces. Since $\rho_x$ has degree zero, we have the map $\Dhat(\rho_x) : X_+ \to (QS^0)_0$ is defined. To analyse $\Dhat(\rho_x)$ we consider the $S$-duals of the maps in~\eqref{eq:rhox}.

\begin{lemma}[{\cite[Lemma~7.3]{crowley:kervaire}}]
\label{lem:SW-dual-factoring}
In the above setup, with $\rho_x = \operatorname{Th}(b_t) \circ \rho_y$ as in~\eqref{eq:rhox}:
\begin{enumerate}[label=(\roman*)]
\item The $S$-dual of the Thom-level bundle map $\operatorname{Th}(b_t) : \operatorname{Th}(t^*\nu_X) \to \operatorname{Th}(\nu_X)$ is the collapse (umkehr) map $t^! : X_+ \to \operatorname{Th}(\nu_t)$.
\item The duality isomorphism identifies $\pi_{m+K}(\operatorname{Th}(t^*\nu_X))$ with $[\operatorname{Th}(\nu_t),\, (QS^0)_0]$.
\item The dual element factors as:
\[
\Dhat(\rho_x) \;=\; \Dhat(\rho_y) \circ t^! \;\in\; [X_+,\, (QS^0)_0].
\]
\end{enumerate}
\end{lemma}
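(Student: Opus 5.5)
The plan is to derive all three parts from naturality of Spanier--Whitehead duality with respect to composition, together with the identification of the $S$-dual of a Thom-space map induced by a bundle map over an embedding.

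For (i), I would start from the ambient set-up. Embed $X \subset \mathbb{R}^{m+K}$ and let $U$ be a tubular neighbourhood, so that $\operatorname{Th}(\nu_X) \cong U/\partial U$. The standard duality map $\mu_X : S^{m+K} \to \operatorname{Th}(\nu_X) \wedge X_+$ is the Pontryagin--Thom collapse followed by the Thom diagonal. Restricting $\nu_X$ to $Y$ gives $t^*\nu_X$, whose total space is a tubular neighbourhood of $Y$ in $\mathbb{R}^{m+K}$, because $\nu_Y \cong t^*\nu_X \oplus \nu_t$ stably. Hence $\operatorname{Th}(t^*\nu_X)$ is $(m+K)$-dual to $\operatorname{Th}(\nu_t)$, by Atiyah duality for the manifold $D(\nu_t)$ with boundary $S(\nu_t)$, that is, $D(\nu_t)/S(\nu_t) = \operatorname{Th}(\nu_t)$. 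I would then check that the square of duality maps
\[
\begin{tikzcd}[column sep=2.5em]
S^{m+K} \arrow[r] \arrow[d] & \operatorname{Th}(t^*\nu_X)\wedge \operatorname{Th}(\nu_t) \arrow[d, "\operatorname{Th}(b_t)\wedge 1"] \\
\operatorname{Th}(\nu_X)\wedge X_+ \arrow[r, "1\wedge t^!"] & \operatorname{Th}(\nu_X)\wedge \operatorname{Th}(\nu_t)
\end{tikzcd}
\]
commutes up to homotopy. Both composites are collapse maps onto the neighbourhood of $Y$ inside the neighbourhood of $X$, followed by the diagonal. This commutation is exactly the defining condition for $t^!$ and $\operatorname{Th}(b_t)$ to be $S$-dual, which gives (i).

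For (ii), I would use the general fact that duality converts homotopy into cohomotopy. Since $\operatorname{Th}(t^*\nu_X)$ is $(m+K)$-dual to $\operatorname{Th}(\nu_t)$, we get
\[
\pi_{m+K}(\operatorname{Th}(t^*\nu_X)) = [S^{m+K}, \operatorname{Th}(t^*\nu_X)] \cong \{\operatorname{Th}(\nu_t), S^0\}.
\]
The right-hand side is $[\operatorname{Th}(\nu_t), QS^0]_*$ by the loop--suspension adjunction, exactly as in the definition of $\Dhat$. The degree-zero condition then places the image in the component $(QS^0)_0$; to see this, restrict along the bottom cell, or equivalently use that the augmentation corresponds to the Hurewicz degree.

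For (iii), I would apply the contravariance of duality on composites, $D(f\circ g) = D(g)\circ D(f)$, to $\rho_x = \operatorname{Th}(b_t)\circ\rho_y$. The dual of $\rho_y : S^{m+K}\to \operatorname{Th}(t^*\nu_X)$ is $\Dhat(\rho_y) : \operatorname{Th}(\nu_t)\to S^0$, and by (i) the dual of $\operatorname{Th}(b_t)$ is $t^!$. This gives $\Dhat(\rho_x) = \Dhat(\rho_y)\circ t^!$, with the component statement inherited from (ii).

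The main obstacle is (i), specifically the compatibility of the chosen duality maps up to sign and homotopy. I would handle it by building all the neighbourhoods nested inside a single embedding, so that the commutation holds on the nose at the level of collapse maps.
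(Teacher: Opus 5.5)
Your proposal is correct and is essentially the standard argument behind the cited \cite[Lemma~7.3]{crowley:kervaire}; the paper itself gives no proof. The argument has three steps: Atiyah duality for the codimension-zero piece $D(\nu_t)\subset X$, the commuting square of duality maps (which holds on the nose once the neighbourhoods are nested), and contravariance of $S$-duality applied to $\rho_x=\operatorname{Th}(b_t)\circ\rho_y$.

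Two small corrections, neither affecting validity. First, the total space of $t^*\nu_X$ over $Y$ is not a codimension-zero thickening in $\mathbb{R}^{m+K}$; $\nu_X|_{D(\nu_t)}$ is, and that is what your Atiyah-duality sentence actually uses. Second, (ii) needs no degree-zero hypothesis: $\operatorname{Th}(\nu_t)$ is connected because $l>0$, so every pointed map $\operatorname{Th}(\nu_t)\to QS^0$ automatically lands in $(QS^0)_0$.
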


\begin{lemma}[{\cite[Lemma~7.4]{crowley:kervaire}}]
\label{lem:eta-pinch}
There exists a bundle map $b : \nu_X \to \nu_X$ covering the pinch map $p(x) : X \to X$ such that the tangential normal invariant is:
\[
\eta^t\bigl(p(x),\, b\bigr) \;=\; [1] * \Dhat(\rho_x) \;=\; [1] * \bigl(\Dhat(\rho_y) \circ t^!\bigr) \;\in\; [X,\, SG].
\]
\end{lemma}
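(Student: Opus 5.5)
The plan is to reduce the statement to Lemma~\ref{lem:eta-connected-sum} and Lemma~\ref{lem:SW-dual-factoring}, which are already in hand. First we identify the pinch map $p(x)$ geometrically with the ``connected sum with the identity'' normal map $[X,\mathrm{Id},\mathrm{Id}] + [S^m, x, b_x]$.

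Since $x = t\circ y$ pulls back $\nu_X$ trivially, the bundle map $b_x = b_t\circ b_y$ makes $(S^m, x, b_x)$ a normal map of degree $0$ over $X$. Its bordism class is therefore a degree-zero element of $\Omega_m(X,\nu_X)$.

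Concretely, form the boundary connected sum of $X\times I$ with $S^m\times I$ along a small disc in the top cell. The result is a normal bordism from $X \# S^m \cong X$ to the disjoint union of $X$ and $S^m$. On the end $X\#S^m$, the reference map is exactly the composite
\[
X \xrightarrow{q} X\vee S^m \xrightarrow{\mathrm{Id}\vee x} X \xrightarrow{} X,
\]
that is, $p(x)$, as in \cite[Proof of 7.4]{crowley:kervaire}. The bundle data glue to a bundle map $b:\nu_X\to\nu_X$ covering $p(x)$. This is because $b_x$ agrees with the identity framing over the disc where the gluing takes place, after a homotopy that uses the triviality of $x^*\nu_X$.

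Normal invariants are invariants of normal bordism classes over $X$. Hence $\eta^t(p(x),b) = \eta^t([X,\mathrm{Id},\mathrm{Id}]+[S^m,x,b_x])$. By Lemma~\ref{lem:eta-connected-sum} this equals $[1]*\Dhat(\rho_x)$, where $\rho_x=\mu_X([S^m,x,b_x])$.

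For the second equality we use \eqref{eq:rhox}, which writes $\rho_x = \operatorname{Th}(b_t)\circ\rho_y$. Lemma~\ref{lem:SW-dual-factoring}(iii) then gives $\Dhat(\rho_x)=\Dhat(\rho_y)\circ t^!$. Applying $[1]*$ to both sides yields the formula in the statement.

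The main obstacle is the first step. We have to check carefully that the glued bundle map on the end $X\#S^m$ really covers $p(x)$ up to homotopy. We also have to check that this bundle-map choice is compatible with the one implicit in Lemma~\ref{lem:eta-connected-sum}.

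This is where the statement only asserts existence of \emph{some} $b$. Different trivialisations of $x^*\nu_X$ change $b$ by an element acting through $[X,SO]$. We will simply take $b$ to be the bundle map produced by the gluing, which is exactly the one realising the bordism sum, so no further comparison is needed.
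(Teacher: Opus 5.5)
Your proposal is correct and follows essentially the same route as the paper, which defers to \cite[Proof of 7.4]{crowley:kervaire}: you identify $(p(x),b)$ with the bordism sum $[X,\mathrm{Id},\mathrm{Id}]+[S^m,x,b_x]$ via the connected-sum ($1$-handle) bordism, then apply Lemma~\ref{lem:eta-connected-sum} for the first equality and \eqref{eq:rhox} together with Lemma~\ref{lem:SW-dual-factoring}(iii) for the second. One small point: the gluing really only needs the bundle maps to extend over the contractible $1$-handle, which is an orientation check; triviality of $x^*\nu_X$ is not used again there, since it is already encoded in the existence of $b_x$.
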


\subsubsection{About normal invariant of certain kinds of pinch map on $V \times S^k$}
Let $\alpha := i \circ a \colon S^{m+k} \xrightarrow{a} V \xrightarrow{i} V \times S^k$, where $i$ is inclusion into a fibre. The \textbf{pinch map} $p_\alpha \colon V \times S^k \to V \times S^k$ is the composition:
\[
V \times S^k \;\xrightarrow{\;\operatorname{pinch}\;}\; V \times S^k \vee S^{m+k} \;\xrightarrow{\;\mathrm{Id} \,\vee\, \alpha\;}\; V \times S^k \vee V \times S^k \;\xrightarrow{\;\nabla\;}\; V \times S^k,
\]
Here we collapse the boundary of an embedded disc $D^{m+k} \hookrightarrow \operatorname{int}(V \times D^k_-)$ to a point, creating a wedge summand $S^{m+k}$.

Decompose $S^k = D^k_- \cup_{S^{k-1}} D^k_+$, giving $V \times S^k = {V \times D^k_-} \cup {V \times D^k_+}$.
Since the pinch is performed from the interior of $V \times D^k_- \hookrightarrow V \times S^k$, the entire construction (pinch, wedge, fold) takes place in the interior of $M_1 = V \times D^k_-$. On $M_2 = V \times D^k_+$, none of these operations have any effect.
In other words, we can do the \emph{same} pinch-wedge-fold construction on $V \times D^k_-$ alone:
\[
P_\alpha\big|_{M_1} \;:\; V \times D^k_- \;\xrightarrow{\;\operatorname{pinch}\;}\; V \times D^k_- \vee S^{m+k} \;\xrightarrow{\;\mathrm{Id} \,\vee\, \alpha\;}\; V \times D^k_- \vee V \times S^k \;\xrightarrow{\;\nabla'\;}\; V \times D^k_- \subset V \times S^k,
\]
which is a self-homotopy equivalence of $(V \times D^k_-,\; V \times S^{k-1})$ relative to the boundary.
The pinch map on all of $V \times S^k$ is then:
\[
P_\alpha \;=\; P_\alpha\big|_{M_1} \;\cup\; \mathrm{Id}_{M_2}.
\]
That is, $P_\alpha$ on $V \times S^k$ is exactly $P_\alpha|_{V \times D^k_-}$ capped on top by $V \times D^k_+ \xrightarrow{\mathrm{Id}} V \times D^k_+$. By~\cite[Prop.~2.3]{bks}, $\eta(P_\alpha)$ lies in the $[\Sigma^k V \vee S^k, G/O]$ component.

The tangential normal map covering $P_\alpha$ is the pair $(P_\alpha,\, \overline{P_\alpha})$, where the covering bundle map (following~\cite[proof of Lemma~7.4]{crowley:kervaire}) is:
\[
\overline{P_\alpha} \;=\; \mathrm{Id}(\mathcal{\mathlarger{\mathlarger {\nu}}}_{(V \times S^k)}) \;\mathbin{\#}\; (b_i \circ b_a).
\]
Here $b_a$ is the Pontryagin--Thom bundle map on the attaching map $a$ and $b_i$ is the bundle map on the inclusion $i \colon V \hookrightarrow V \times S^k$.
Crucially, the connected-sum modification $b_i \circ b_a$ is supported entirely inside $M_1 = V \times D^k_-$. On $M_2 = V \times D^k_+$:
\[
\overline{P_\alpha}\big|_{M_2} \;=\; \mathrm{Id}(\mathlarger{\mathlarger {\nu}}_{V \times S^k})\big|_{M_2} \;=\; \mathrm{Id}_{\mathlarger{\nu}_{M_2}}.
\]
So the tangential normal map $(P_\alpha, \overline{P_\alpha})$ is again of the form:
\begin{center}
\emph{non-trivial tangential map on $V \times D^k_-$, capped by identity (map and bundle map) on $V \times D^k_+$.}
\end{center}
The same Proposition~\ref{prop:cstar} (applied in the tangential surgery exact sequence with $SG$ in place of $G/O$) gives:
\[
\eta^t(P_\alpha,\, \overline{P_\alpha}) \;=\; c^*\,\eta^t\bigl((P_\alpha, \overline{P_\alpha})\big|_{M_1}\bigr) \;\in\; \operatorname{im}\bigl(c^* \colon [\Sigma^k V \vee S^k,\; SG] \to [V \times S^k,\; SG]\bigr).
\]
\begin{remark}
\label{rem:identity-on-M2}
The key point in both cases is the same: both the map $P_\alpha$ \emph{and} the covering bundle map $\overline{P_\alpha}$ are the identity on $M_2 = V \times D^k_+$. For the map, this is by construction (the pinch takes place only in the lower hemisphere). For the bundle map, this holds because $\overline{P_\alpha} = \mathrm{Id}(\mathlarger{\mathlarger {\nu}}) \mathbin{\#} (b_i \circ b_a)$, and the connected-sum modification is localised inside $M_1$.
\end{remark}
\section{Normal invariants of self homotopy equivalences of $V_{n,2}$}

Any homotopy equivalence of $V_{n,2}$ is a post-composition of a diffeomorphism with a pinch map~\cite[Theorem~2.4]{nomura}, so computing normal invariants of pinch maps suffices. In \S\,3.1 we treat the family $V_{2n+2,2}$, where the splitting $\pi_{4n+1}(V) \cong \pi_{4n+1}(S^{2n}) \oplus \pi_{4n+1}(S^{2n+1})$ allows a direct computation. In \S\,3.2 we treat $V_{2n+1,2}$, where the Moore-space skeleton $S^{2n-1} \cup_2 e^{2n}$ requires a more involved analysis.

\subsection{Calculating Normal Invariants for $V_{2n+2,2}$}

We have the fibration
\[
S^n \xrightarrow{i} V_{2n+2,2} \underset{s}{\overset{\pi}{\rightleftarrows}} S^{n+1}
\]
We also have a CW structure $V_{2n+2,2} \simeq (S^{2n} \vee S^{2n+1}) \cup_{\rho} e^{4n+1}$, where $\rho$ is stably trivial~\cite[Theorem~7.10]{james}.

There is a short exact sequence
\[
1 \to \pi_{2b-3}(S^{b-1}) + \pi_{2b-3}(S^{b-2}) / H \to \mathcal{E}(V_{b,2}) \to \mathbb{Z}_2 \times \mathbb{Z}_2 \to 1,
\]
where $H$ is the subgroup generated by $J(\xi \eta_{b-2})$ and the Whitehead product $[\eta_{b-2}^2, \iota_{b-2}]$, and $b = 2n+2$~\cite[Theorem~2.4]{nomura}. Here the first map is given by pinch maps and the last component corresponds to diffeomorphisms. We compute the normal invariants of the pinch maps first.

\subsubsection*{3.1.A.\ Normal invariants of pinch maps using fibration}

Note that $\pi_{4n+1}(S^{2n}) \oplus \pi_{4n+1}(S^{2n+1}) \xrightarrow[\cong]{l^*+s^*} \pi_{4n+1}(V)$ is an isomorphism, since $V$ admits a section of the sphere bundle. Hence any map $S^{4n+1} \to V$ factors through lower-dimensional spheres and has degree $0$, so every element of $\pi_{4n+1}(V)$ gives a tangential homotopy equivalence as a pinch map. We follow the setup of~\cite{crowley:kervaire}. 

\begin{lemma}\label{lem:3.1}
Let $S^{4n+1} \xrightarrow{\alpha} S^{2n}$ be an element of $\pi_{4n+1}(S^{2n})$ and let $p_{(l \circ \alpha)}$ denote the corresponding pinch map.
\begin{enumerate}[label=\textup{(\roman*)}]
\item $i^*\bigl(\tilde{\eta}^t(p_{(l \circ \alpha)},\mathrm{Id} \# b_l \circ b_\alpha)\bigr) = (0,\pm \alpha) \in \{ S^{2n},S^0\} \oplus \{ S^{2n+1},S^0\}$.
\item $i^*\bigl(\eta^t(p_{(l \circ \alpha)},\mathrm{Id} \# b_l \circ b_\alpha)\bigr) = (0,\pm \alpha) \in [S^{2n},SG] \oplus [S^{2n+1},SG]$.
\end{enumerate}
Here we are denoting the image of $x \in \{V,S^0\}$ again by $x \in [V,SG]$.
\end{lemma}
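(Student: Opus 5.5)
We apply Lemma~\ref{lem:eta-pinch} together with Lemma~\ref{lem:SW-dual-factoring} to the factorisation $x = l\circ\alpha \colon S^{4n+1}\xrightarrow{\alpha} S^{2n}\xrightarrow{l} V$. Here $t = l$ is the inclusion of the fibre sphere $Y = S^{2n}$, which has codimension $2n+1$ in $V = V_{2n+2,2}$, and $y=\alpha$. Lemma~\ref{lem:SW-dual-factoring}(iii) gives
\[
\Dhat(\rho_x) \;=\; \Dhat(\rho_\alpha)\circ l^{!}, \qquad l^{!}\colon V_+\to \operatorname{Th}(\nu_l).
\]
The normal bundle of the fibre inclusion is the pullback of the tangent bundle of the base at a point, so it is trivial. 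Hence $\operatorname{Th}(\nu_l)\simeq \Sigma^{2n+1}S^{2n}_+ \simeq S^{4n+1}\vee S^{2n+1}$. We then compute the two ingredients separately, and afterwards restrict along $i\colon S^{2n}\vee S^{2n+1}\to V$, the inclusion of the $(4n)$-skeleton.

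\textbf{First ingredient: $\Dhat(\rho_\alpha)$.} This is the $S$-dual of the Thom-level map $S^{4n+1+K}\to \operatorname{Th}(\alpha^*\nu)\simeq \Sigma^K S^{2n}_+$, namely $\Sigma^K\alpha$ on the top cell and degree zero on the bottom cell. By self-duality of stable maps between spheres, $\Dhat(\rho_\alpha)$ is $\pm\alpha$, viewed as a stable map $S^{2n+1}\to S^0$ of stem $2n$. It is supported on the $S^{2n+1}$ wedge summand of $\operatorname{Th}(\nu_l)$, while the $S^{4n+1}$ summand (the dual of the point) maps by the degree-$0$ class.

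\textbf{Second ingredient: the collapse $l^!$.} We must show that $l^{!}$ restricted to the skeleton $S^{2n}\vee S^{2n+1}$ is $(0,\pm 1)$ on the $S^{2n+1}$ summand. Cohomologically, $l^!$ is Poincaré dual to the fibre class: the Thom class of $\nu_l$ in degree $2n+1$ pulls back to the Poincaré dual of $[S^{2n}]$. That dual is the generator of $H^{2n+1}(V)$, which is dual to the section sphere $s(S^{2n+1})$. So on the $S^{2n+1}$ cell, $l^!$ has degree $\pm1$ into the bottom $S^{2n+1}$ of the Thom space, and on $S^{2n}$ it is null for degree reasons. There is one caveat: a degree argument only determines the map up to stable classes $S^{2n}\to S^{2n+1}$ and $S^{2n+1}\to S^{2n+1}$. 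The first of these is zero stably, and the second is detected by degree, so the skeleton map is indeed $(0,\pm1)$. The top-cell summand $S^{4n+1}$ of the Thom space receives only the top cell of $V$, so it contributes nothing after restricting along $i$.

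\textbf{Conclusion.} Composing, $i^*\tilde\eta^t = (\Dhat(\rho_\alpha)\circ l^!)\circ i = (0,\pm\alpha)$ in $\{S^{2n},S^0\}\oplus\{S^{2n+1},S^0\}$, which is (i). For (ii), Lemma~\ref{lem:eta-pinch} gives $\eta^t = [1]*\Dhat(\rho_x)$. The translation $[1]*$ identifies $[X,(QS^0)_0]$ with $[X,SG]$ naturally in $X$, so it commutes with $i^*$, and (ii) follows from (i) under the stated convention.

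The main obstacle is making the identification of $l^{!}$ on the skeleton rigorous, including sign and orientation bookkeeping and the trivialisation of $\nu_l$. The approach is to use the geometric transversality picture: the section sphere $s(S^{2n+1})$ meets the fibre $S^{2n}$ transversally in one point. Consequently the composite $S^{2n+1}\to V\to \operatorname{Th}(\nu_l)\to S^{2n+1}$ is a degree $\pm1$ collapse. Meanwhile $l(S^{2n})$ can be displaced off a parallel fibre, which makes $l^!\circ l$ null.
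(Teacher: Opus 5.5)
Your proof is correct and follows essentially the same route as the paper. Both use the Crowley--Hambleton factorisation $\widehat{D}(\rho_x)=\widehat{D}(\rho_\alpha)\circ l^{!}$, self-duality of stable maps between spheres to place $\pm\alpha$ on the $S^{2n+1}$ summand of $\operatorname{Th}(\nu_l)\simeq S^{4n+1}\vee S^{2n+1}$, and a duality argument (your Poincar\'e dual of the fibre class is the paper's statement that $l^{!}$ is an isomorphism on $H^{2n+1}$) showing $l^{!}$ is $\pm1$ on the $(2n+1)$-cell and null on $S^{2n}$. Two small slips: $\pm\alpha$ viewed as a stable map $S^{2n+1}\to S^0$ has stem $2n+1$, not $2n$, and the Thom space in your first ingredient should be $\operatorname{Th}(l^{*}\nu_V)$ rather than $\operatorname{Th}(\alpha^{*}\nu)$.
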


\begin{proof}[proof of (i)]
Let $S^{4n+1} \xrightarrow{\alpha} S^{2n}$ be an element of $\pi_{4n+1}(S^{2n})$ and let $p_{(l \circ \alpha)}$ denote the corresponding pinch map, which is a tangential homotopy equivalence of degree one. Embed $V \subset \mathbb{R}^{4n+1+k}$ for $k \gg 4n+1$. We have the following diagram:
\begin{equation}\label{eq:3.1}\tag{3.1}
\begin{tikzcd}
S^{4n+1} \times \mathbb{R}^k 
  \arrow[r,"\alpha \times Id"] 
  \arrow[d, "\psi","\cong"{left}] 
& S^{2n} \times \mathbb{R}^{k} 
  \arrow[r,"l \times Id"] 
  \arrow[d,"\phi","\cong" {left}] 
& V \times \mathbb{R}^{k} 
  \arrow[d,"{\tilde{\phi}}","\cong" {left}] 
\\
\nu_{S^{4n+1}} 
  \arrow[r, "b_\alpha"'] 
  \arrow[d]
& l^*(\nu_V) 
  \arrow[r, "b_l"'] 
  \arrow[d]
& \nu_V
  \arrow[d]
\\
S^{4n+1} 
  \arrow[r,"\alpha"] 
& S^{2n} 
  \arrow[r,"l"] 
& V 
\end{tikzcd}
\end{equation}
This leads us to the following diagram:
\begin{equation}\label{eq:3.2}\tag{3.2}
\begin{tikzcd}[column sep=3em,row sep=2.5em]
S^{4n+1+k}
  \arrow[r]
  \arrow[d,dotted,"D"]
&
\operatorname{Th}\bigl(\nu_{S^{4n+1}}\bigr)
  \arrow[r, "\operatorname{Th}(b_{\alpha})"]
  \arrow[d,dotted,"D"]
&
\operatorname{Th}\bigl(l^{*}\nu_V\bigr)
  \arrow[r, "\operatorname{Th}(b_l)"]
  \arrow[d,dotted,"D"]
&
\operatorname{Th}(\nu_V)
  \arrow[d,dotted,"D"]
\\
S^{k}
&
\Sigma^{k} S^{4n+1+k}_{+}
  \arrow[l, "\Sigma^{k} c_{\theta_0}"']
&
\Sigma^{k} \operatorname{Th}(\nu_{l})
  \arrow[l, "D(\operatorname{Th}(b_{\alpha}))"']
&
\Sigma^{k} V_{+}
  \arrow[l, "\Sigma^{k} l^{!}_{+}"']
\end{tikzcd}
\end{equation}
Here $D$ denotes the Spanier--Whitehead dual $D_{4n+1+2k}$~\cite[I.4]{browder}. This is not a commutative diagram; the dotted arrows indicate Spanier--Whitehead duals. By~\cite[Lemmas~7.3,~7.4]{crowley:kervaire}, the normal invariant of $p_{(l \circ \alpha)}$ is $\eta^t(p_{(l \circ \alpha)},\mathrm{Id} \# b_l \circ b_\alpha) = [1]*g_{(l \circ \alpha)}$, where $g_{(l \circ \alpha)} \in [V,\Omega^\infty_0S^\infty]$ is the following map:
\begin{equation}\label{eq:3.3}\tag{3.3}
g_{(l \circ \alpha)}: V \xrightarrow{\;l^{!}\;}
   \operatorname{Th}(\nu_l)
   \xrightarrow{\;\operatorname{adj}\bigl(\Sigma^{k} c_{s_0} \circ D(\operatorname{Th}(b_{\alpha}))\bigr)\;}
   \Omega^{k} S^{k}.
\end{equation}
We are interested in the adjoint of $g_{(l \circ \alpha)} \in \{V,S^0\} \cong [\Sigma^k V,S^k]$, which we denote $\tilde{\eta}^t(p_{(l \circ \alpha)},\mathrm{Id} \# b_l \circ b_\alpha)$:
\begin{equation}\label{eq:3.4}\tag{3.4}
\tilde{\eta}^t(p_{(l \circ \alpha)},\mathrm{Id} \# b_l \circ b_\alpha) : \Sigma^{k} V
 \xrightarrow{\;\Sigma^{k} l^{!}\;}
 \Sigma^{k} \operatorname{Th}(\nu_{l})
 \xrightarrow{\;D(\operatorname{Th}(b_{\alpha}))\;}
 \Sigma^{k} S^{4n+1}_{+}
 \xrightarrow{\;\Sigma^{k} c_{s_0}\;}
 S^{k}.
\end{equation}
From the CW structure of $V$ we have a split exact sequence $0 \to\{S^{4n+1},S^0\} \xrightarrow{q^*} \{V,S^0\} \xrightarrow{i^*} \{ S^{2n},S^0\} \oplus \{ S^{2n+1},S^0\} \to 0$. We compute $i^*(\tilde{\eta}^t(p_{(l \circ \alpha)}))$ component by component. We have the following diagram:
\begin{equation}\label{eq:3.5}\tag{3.5}
\begin{tikzcd}[column sep=3.4em,row sep=3em]
\operatorname{Th}(\nu_{S^{4n+1+k}})
  \arrow[r, "{\operatorname{Th}(\psi)}"{above}, "\cong"{below}]
  \arrow[d, "{\operatorname{Th}(b_{\alpha})}"'{left}]
&
S^{4n+1}_{+} \wedge S^{k}
  \arrow[r, "{\simeq}"{above}, "q"{below}]
  \arrow[d, "{\Sigma^{k}\alpha_{+}}"'{right}]
&
S^{4n+1+k} \vee S^{k}
  \arrow[d, "{\Sigma^{k}\alpha \,\vee\, \mathrm{Id}}"'{right}]
\\
\operatorname{Th}(l^{*}\nu_V)
  \arrow[r, "{\operatorname{Th}(\phi)}"{above}, "\cong"{below}]
&
S^{2n}_{+} \wedge S^{k}
  \arrow[r, "{p}"{above}, "\simeq"{below}]
&
S^{2n+k} \vee S^{k}
\end{tikzcd}
\end{equation}
Call the upper composite $m$ and the lower composite $h$; both are homotopy equivalences. The $S^{2n}$ component of $i^*(\tilde{\eta}^t(p_{(l \circ \alpha)}))$ is (where $i$ is taken up to homotopy between the CW complex $V$ and the manifold $V$):
\begin{equation}\label{eq:3.6}\tag{3.6}
\Sigma^k S^{2n} \xrightarrow{i}\Sigma^{k} V
 \xrightarrow{\;\Sigma^{k} l^{!}\;}
 \Sigma^{k} \operatorname{Th}(\nu_{l})
 \xrightarrow{\;D(\operatorname{Th}(b_{\alpha}))\;}
 \Sigma^{k} S^{4n+1}_+
 \xrightarrow{\;\Sigma^k C_{s_0}\;}
 S^{k}
\end{equation}
Since $\Sigma^{k} \operatorname{Th}(\nu_{l}) \xrightarrow{D(h)} S^{2n+1+k} \vee S^{4n+1+k}$, the map~\eqref{eq:3.6} is zero. Hence the $S^{2n}$ component of $i^*\tilde{\eta}^t(p_{(l \circ \alpha)},\mathrm{Id} \# b_l \circ b_\alpha)$ is zero.\phantom\qedhere

We now compute the $S^{2n+1}$ component of $i^*\tilde{\eta}^t(p_{(l \circ \alpha)},\mathrm{Id} \# b_l \circ b_\alpha)$. By~\cite[Lemmas~7.3,~7.4]{crowley:kervaire} our map is the bottom composite of the following commutative diagram:
\begin{equation}\label{eq:3.7}\tag{3.7}
\begin{tikzcd}[column sep=3.5em,row sep=3.0em]
\Sigma^{k} V
  \arrow[r, "\Sigma^{k} l^{!}"]
 &
\Sigma^{k} \operatorname{Th}(\nu_{l})
  \arrow[r, "D(\operatorname{Th}(b_{\alpha}))"]
  \arrow[d, "\simeq"{left}, "D(h)"]
&
\Sigma^{k} S^{4n+1}_{+}
  \arrow[r,"\Sigma^k C_{s_0}"]
  \arrow[d,"D(m)"{left},"\simeq"{right}]
&
S^{k}
\\
\Sigma^{k} S^{2n+1}
  \arrow[u,"i"]
  \arrow[r, "p"']
&
S^{2n+1+k} \;\vee\; S^{4n+1+k}
  \arrow[r, "{\{\alpha\} \vee \mathrm{Id}}"']
&
S^{k} \vee S^{4n+1+k}
  \arrow[ru, "\tilde{c}", bend right=15]
\end{tikzcd}
\end{equation}
The lower middle map uses $D(\{\alpha\}) = \{\alpha\}$, since stable maps between spheres are canonically self-dual~\cite[Thm~6.1]{funcspaces} (we dualized diagram~\eqref{eq:3.5}). Using the homology--cohomology interchange under Spanier--Whitehead duality ($\widetilde{H}_i(X) \cong \widetilde{H}^{n-i-1}(D_n X)$, see~\cite[I.4]{browder}) to determine $p$ and $\tilde{c}$, we obtain:
We have $\operatorname{Th}(b_l) \cong \operatorname{Th}(\ell \times \mathrm{Id}) \simeq \Sigma^{k} l \vee \mathrm{Id}$. The map $H_{2n}(S^{2n}_{+}) \xrightarrow{\,(\ell \vee \mathrm{Id})_{*}} H_{2n}(V_{+})$ is an isomorphism, hence $H_{2n}\bigl(\operatorname{Th}(b_l)\bigr)$ is an isomorphism, so $H^{2n+1}(l^!)$ is an isomorphism by duality~\cite[I.4.13]{browder}. Therefore $H_{2n+1+k}(p)$ is an isomorphism, so $p = \pm\text{inclusion}$. From the isomorphism of $H_k(C_{s_0})$ we get $\tilde{c}|_{S^k} = \pm\mathrm{Id}$. The stable map is therefore $\pm\{\alpha\}$, giving 
$i^*\tilde{\eta}^t(p_{(l \circ \alpha)},\mathrm{Id} \# b_l \circ b_\alpha)$.
\end{proof}

\begin{proof}[proof of (ii)]
    We just pass to $SG$ here, the proof is immediate.
\end{proof}

We now perform analogous calculations for pinch maps associated to elements of $\pi_{4n+1}(S^{2n+1})$.

\begin{lemma}\label{lem:3.2}
Let $S^{4n+1} \xrightarrow{\beta} S^{2n+1}$ be an element of $\pi_{4n+1}(S^{2n+1})$ and let $p_{(s \circ \beta)}$ denote the corresponding pinch map.
\begin{enumerate}[label=\textup{(\roman*)}]
\item $i^*\bigl(\tilde{\eta}^t(p_{(s \circ \beta)},\mathrm{Id} \# b_s \circ b_\beta)\bigr) = (\pm \beta,0)$ or $(\pm \beta, \pm (\beta \circ \eta)) \in \{ S^{2n},S^0\} \oplus \{ S^{2n+1},S^0\}$.
\item $i^*\bigl(\eta^t(p_{(s \circ \beta)},\mathrm{Id} \# b_s \circ b_\beta)\bigr) = (\pm \beta,0)$ or $(\pm \beta, \pm (\beta \circ \eta)) \in [S^{2n},SG] \oplus [S^{2n+1},SG]$.
\end{enumerate}
\end{lemma}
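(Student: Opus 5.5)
I will follow the proof of Lemma~\ref{lem:3.1} line by line, now with the section $s\colon S^{2n+1}\to V$ (the $(2n+1)$-cell) in place of the fibre inclusion $l$. By Lemmas~\ref{lem:SW-dual-factoring} and~\ref{lem:eta-pinch}, the tangential normal invariant is $[1]*g_{(s\circ\beta)}$. Its adjoint is the composite
\[
\Sigma^k V \xrightarrow{\Sigma^k s^!} \Sigma^k\operatorname{Th}(\nu_s) \xrightarrow{D(\operatorname{Th}(b_\beta))} \Sigma^k S^{4n+1}_+ \xrightarrow{\Sigma^k c_{s_0}} S^k .
\]
As before, I will use the split exact sequence $0\to\{S^{4n+1},S^0\}\to\{V,S^0\}\xrightarrow{i^*}\{S^{2n},S^0\}\oplus\{S^{2n+1},S^0\}\to 0$ and compute the two components separately. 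Part (ii) will again follow by passing from $\{V,S^0\}$ to $[V,SG]$ via $[1]*$.

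\textbf{Step 1: the analogue of diagram~\eqref{eq:3.5}.} Trivialise $\nu_{S^{4n+1}}$ and $s^*\nu_V$. Since $V$ is stably parallelizable, I get $\operatorname{Th}(s^*\nu_V)\simeq S^{2n+1+k}\vee S^k$ and $\operatorname{Th}(b_\beta)\simeq \Sigma^k\beta\vee \mathrm{Id}$. Dualising identifies $D(\operatorname{Th}(b_\beta))$ with $\{\beta\}\vee\mathrm{Id}\colon S^{2n+k}\vee S^{4n+1+k}\to S^k\vee S^{4n+1+k}$, using self-duality of stable maps between spheres. The difference from Lemma~\ref{lem:3.1} is that the dual of $S^{2n+1}$ in degree $4n+1$ is now the $2n$-sphere. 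So the umkehr $s^!\colon V_+\to\operatorname{Th}(\nu_s)$ is detected in $H^{2n}$. By the homology--cohomology interchange, $H_{2n+1}(\operatorname{Th}(b_s))$ is an isomorphism, hence $H^{2n}(s^!)$ is an isomorphism. Composing with the inclusion of the $S^{2n}$ cell gives $\pm$ the identity on the $S^{2n+k}$ summand. This yields the $S^{2n}$-component $\pm\beta$.

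\textbf{Step 2: the $S^{2n+1}$-component.} This is where the statement's ambiguity comes from, and I expect it to be the main obstacle. In Lemma~\ref{lem:3.1} the restriction of $\Sigma^k l^!$ to the other cell was zero for dimensional reasons. Here, restricting $\Sigma^k s^!$ to $\Sigma^kS^{2n+1}$ lands in $S^{2n+k}\vee S^{4n+1+k}$, and the map $\Sigma^kS^{2n+1}\to S^{2n+k}$ is stably an element of $\pi_1^s=\mathbb{Z}/2\langle\eta\rangle$. Homology cannot detect this element. The plan is to identify it with the stable attaching data between the $2n$- and $2n+1$-cells in the S-dual of $V_+$, which is $\operatorname{Th}(\nu_V)$. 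Since $V_{2n+2,2}\simeq(S^{2n}\vee S^{2n+1})\cup e^{4n+1}$, the stable splitting says the cells are unlinked in $V$. However, the umkehr of the section $s$ may differ from the splitting inclusion by a map that factors through $\eta$ (a Hopf-invariant / $Sq^2$ ambiguity depending on $n$ mod $4$). I would first try to check whether $Sq^2\colon H^{2n}\to H^{2n+2}$ in $\operatorname{Th}(\nu_s)$ vanishes. I do not expect to settle this in all cases, so I will only record that this component is $\epsilon\,\eta$ with $\epsilon\in\{0,1\}$.

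\textbf{Step 3: assembling the answer.} Composing with $\{\beta\}\vee\mathrm{Id}$ and with $\tilde c$, where $\tilde c|_{S^k}=\pm\mathrm{Id}$ from $H_k(c_{s_0})$ as in Lemma~\ref{lem:3.1}, gives the $S^{2n+1}$-component $\pm\epsilon(\beta\circ\eta)$. The top-cell summand $\{S^{4n+1},S^0\}$ is killed by $i^*$. Hence $i^*\tilde\eta^t=(\pm\beta,0)$ or $(\pm\beta,\pm\beta\circ\eta)$, which is (i), and (ii) follows immediately.
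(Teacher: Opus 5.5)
Your proposal is correct and takes essentially the same route as the paper. You fix the $S^{2n}$-component as $\pm\{\beta\}$ because $\Sigma^k s^!$ is a homology isomorphism in the relevant degree. The paper derives this from $s^*$ being an isomorphism on $H^{2n+1}$, which is forced by $\pi\circ s=\mathrm{Id}$; you should cite that fact to justify your claim that $\operatorname{Th}(b_s)$ is an isomorphism on $H_{2n+1}$ after the Thom shift. You then leave the $S^{2n+1}$-component as an undetermined element of $\pi_1^s\cong\mathbb{Z}/2\langle\eta\rangle$ composed with $\{\beta\}$, which gives exactly the stated disjunction $0$ or $\pm(\beta\circ\eta)$. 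The paper likewise leaves this ambiguity unresolved, so your $Sq^2$ discussion is not needed.
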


\begin{proof}[proof of (i)]
Let $\beta \in \pi_{4n+1}(S^{2n+1})$. We have the following diagram:
\begin{equation}\label{eq:3.8}\tag{3.8}
\begin{tikzcd} 
S^{4n+1} \times \mathbb{R}^k \arrow[r,"\beta \times id"] \arrow[d, "\psi","\cong" {left}] 
& S^{2n+1} \times \mathbb{R}^k \arrow[r, " s \times \text{id} "] \arrow[d, "\phi","\cong"{left}] 
& V \times \mathbb{R}^k \arrow[d, "\tilde{\phi}", "\cong" {left}] \\
\nu_{S^{4n+1}} \arrow[r, "b_\beta"] \arrow[d] 
& s^*(\nu_V) \arrow[r, "b_s"] \arrow[d] 
& \nu_V \arrow[d] \\
S^{4n+1} \arrow[r, "\beta"] 
& S^{2n+1} \arrow[r, "s"] 
& V 
\end{tikzcd}
\end{equation}
Taking Thom-space level maps on upper squares, we get:
\begin{equation}\label{eq:3.9}\tag{3.9}
\begin{tikzcd}[column sep=large]
S^{4n+1+k} \arrow[r] 
  \arrow[d, "D", dotted] 
& \operatorname{Th}(\nu_{S^{4n+1}}) 
  \arrow[r, "\operatorname{Th}(b_\beta)"] 
  \arrow[d, "D",dotted] 
& \operatorname{Th}(s^* \nu_V) 
  \arrow[r, "\operatorname{Th}(b_s)"] 
  \arrow[d, "D",dotted] 
& \operatorname{Th}(\nu_V) 
  \arrow[d, "D",dotted] \\
S^k 
  & \Sigma^k S_+^{4n+1} 
      \arrow[l, "\Sigma^k c_{S^0}"'] 
  & \Sigma^k \operatorname{Th}(\nu(s)) 
      \arrow[l, "D(\operatorname{Th}(b_\beta))"'] 
  & \Sigma^k V_+ 
      \arrow[l, "\Sigma^k s_+^1"']
\end{tikzcd}
\end{equation}
Here $D = D_{4n+1+2k}$. By~\cite[Lemmas~7.3,~7.4]{crowley:kervaire}, the normal invariant of $p_{(s \circ \beta)}$ is $\eta^t(p_{(s \circ \beta)},\mathrm{Id} \# b_s \circ b_\beta) = [1]*g_{(s \circ \beta)}$, where $g_{(s \circ \beta)} \in [V,\Omega^\infty_0S^\infty]$ is:
\begin{equation}\label{eq:3.10}\tag{3.10}
g_{(s \circ \beta)}: V \xrightarrow{\;s^{!}\;}
   \operatorname{Th}(\nu(s))
   \xrightarrow{\;\operatorname{adj}\bigl(\Sigma^{k} c_{s_0} \circ D(\operatorname{Th}(b_{\beta}))\bigr)\;}
   \Omega^{k} S^{k}.
\end{equation}
The adjoint lies in $\{V,S^0\} \cong [\Sigma^k V,S^k]$~\cite[Lemmas~7.3,~7.4]{crowley:kervaire}. The map of interest (denoted $\tilde{\eta}^t(p_{(s \circ \beta)},\mathrm{Id} \# b_s \circ b_\beta)$) is:
\begin{equation}\label{eq:3.11}\tag{3.11}
\tilde{\eta}^t(p_{(s \circ \beta)},\mathrm{Id} \# b_s \circ b_\beta) = adj \, (g_{(s \circ \beta)}): \Sigma^{k} V
 \xrightarrow{\;\Sigma^{k} s^{!}\;}
 \Sigma^{k} \operatorname{Th}(\nu(s))
 \xrightarrow{\;D(\operatorname{Th}(b_{\beta}))\;}
 \Sigma^{k} S^{4n+1}_{+}
 \xrightarrow{\;\Sigma^{k} c_{s_0}\;}
 S^{k}.
\end{equation}

The $S^{2n}$ component of $i^*\tilde{\eta}^t(p_{(s \circ \beta)},\mathrm{Id} \# b_s \circ b_\beta)$ is the lower composite of the following commutative diagram (where $i$ is taken up to homotopy between the CW complex $V$ and the manifold $V$):
\begin{equation}\label{eq:3.12}\tag{3.12}
\begin{tikzcd}[column sep=3.5em,row sep=3.0em]
\Sigma^{k} V
  \arrow[r, "\Sigma^{k} s^{!}"]
 &
\Sigma^{k} \operatorname{Th}(\nu_(s))
  \arrow[r, "D(\operatorname{Th}(b_{\beta}))"]
  \arrow[d, "\simeq"{left}, "D(h')"]
&
\Sigma^{k} S^{4n+1}_{+}
  \arrow[r,"\Sigma^k C_{s_0}"]
  \arrow[d,"D(m')"{left},"\simeq"{right}]
&
S^{k}
\\
\Sigma^{k} S^{2n}
  \arrow[u,"i"]
  \arrow[r, "p' "]
&
S^{2n+k} \;\vee\; S^{4n+1+k}
  \arrow[r, "{\{\beta\} \vee \mathrm{Id}}"']
&
S^{k} \vee S^{4n+1+k}
  \arrow[ru, "\tilde{c}", bend right=15]
\end{tikzcd}
\end{equation}
Here we used that the dual of a wedge of spaces (resp.~maps) is the wedge of the duals~\cite[Theorem~6.5, Corollary~6.7]{funcspaces}. It remains to analyse $\Sigma^k s^!$ on $H_{2n+k}$, which we do via the following diagram:
\begin{equation}\label{eq:3.13}\tag{3.13}
\begin{tikzcd}[column sep=3.5em,row sep=3.0em]
H_{2n}(V_+)
  \arrow[r, "D","\cong"{below} ]
  \arrow[d, "(s^!_+)_*" ]
&
H^{2n+1+k}(\operatorname{Th}\nu_V)
  \arrow[r, "\cong"]
  \arrow[d]
&
H^{2n+1+k}(\Sigma^k V_+)
  \arrow[r, "\cong"]
  \arrow[d, "(\Sigma^k s_+)_*"]
&
H^{2n+1}(V_+)
  \arrow[d, "(s_+)_*"]
\\
H_{2n}(\operatorname{Th}\nu_(S))
  \arrow[r, "D","\cong"{below} ]
&
H^{2n+1+k}(\operatorname{Th}\,s^*\nu_V)
  \arrow[r, "\cong"]
&
H^{2n+1+k}(\Sigma^k S^{2n+1}_+)
  \arrow[r, "\cong"]
&
H^{2n+1}(S^{2n+1}_+)
\end{tikzcd}
\end{equation}
The composition $\mathrm{Id} : S^{2n+1} \xrightarrow{s} V \xrightarrow{\pi} S^{2n+1}$ induces $\mathrm{Id} : \mathbb{Z} \cong H^{2n+1}(S^{2n+1}) \xrightarrow{\pi^*} \mathbb{Z} \cong H^{2n+1}(V) \xrightarrow{s^*} \mathbb{Z} \cong H^{2n+1}(S^{2n+1})$. This forces $s^*$ to be an isomorphism, so $p'$ induces an isomorphism on $H_{2n+k}$, hence $p' = \pm\text{inclusion}$. The resulting stable map is $\{\beta\}$, which is the $S^{2n}$ component of $i^*\tilde{\eta}^t(p_{(s \circ \beta)},\mathrm{Id} \# b_s \circ b_\beta)$.

We now compute the $S^{2n+1}$ component. We have a similar diagram:
\begin{equation}\label{eq:3.14}\tag{3.14}
\begin{tikzcd}[column sep=3.5em,row sep=3.0em]
\Sigma^{k} V
  \arrow[r, "\Sigma^{k} s^{!}"]
&
\Sigma^{k} \operatorname{Th}(\nu_{(s)})
  \arrow[r, "D(\operatorname{Th}(b_{\beta}))"]
  \arrow[d, "\simeq"{left}, "D(h)"]
&
\Sigma^{k} S^{4n+1}_{+}
  \arrow[r, "\Sigma^{k} c_{S^{0}}"]
  \arrow[d, "D(m)"{left}, "\simeq"{right}]
&
S^{k}
\\
\Sigma^{k} S^{2n+1}
  \arrow[u, "i"]
  \arrow[r]
&
S^{2n+k} \;\vee\; S^{4n+1+k}
  \arrow[r, "{\{\beta\} \vee \mathrm{Id}}"]
&
S^{k} \vee S^{4n+1+k}
  \arrow[ru, bend right=15]
\end{tikzcd}
\end{equation}
The lower map is $0$ or $\pm(\beta \circ \eta)$, so
$i^*\tilde{\eta}^t(p_{(s \circ \beta)},\mathrm{Id} \# b_s \circ b_\beta) = (\pm \beta,0)$ or $(\pm \beta, \pm (\beta \circ \eta)) \in \{ S^{2n},S^0\} \oplus \{ S^{2n+1},S^0\}$.\phantom\qedhere
\end{proof}

\begin{proof}[proof of (ii)]
      we pass from $\{V,S^0\}$ to $[V,SG]$, where the tangential normal invariants lie, denoting the image of $x \in \{V,S^0\}$ again by $x \in \{V,SG\}$. Note that we have the map:
\begin{equation}\label{eq:3.15}\tag{3.15}
hS^t(V) \xrightarrow{\;\eta^t\;} [V,SG] \xrightarrow{\;i^*\;} [S^{2n},SG] \oplus [S^{2n+1},SG]
\end{equation}
It remains to show that all elements of $[S^{2n},SG] \oplus [S^{2n+1},SG]$ are realised by normal invariants of pinch maps. This follows by composing the previous maps, using the composition formula~\cite[2.5--2.6]{madsen}.
\end{proof}

Combining Lemmas~\ref{lem:3.1} and~\ref{lem:3.2} with the composition formula, we can now show that all elements of $\operatorname{Im}(\tau_*)$ are realised by compositions of pinch maps.

\begin{theorem}\label{thm:3.3}
Every element of $\operatorname{Im}(\tau^*) \subset [V, SG]$ has a representative of the form $\varphi^{(\Sigma,g)} : V \mathbin{\#} \Sigma \xrightarrow{h_\Sigma} V \xrightarrow{P_g} V$.
\end{theorem}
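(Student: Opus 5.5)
The plan is to use the split short exact sequence coming from the cell structure $V \simeq (S^{2n}\vee S^{2n+1})\cup_\rho e^{4n+1}$ with $\rho$ stably trivial. Since $\rho$ is stably trivial, the cofibre sequence gives a split short exact sequence of groups
\[
0 \to [S^{4n+1},SG] \xrightarrow{q^*} [V,SG] \xrightarrow{i^*} [S^{2n},SG]\oplus[S^{2n+1},SG] \to 0 .
\]
For each $x\in\operatorname{Im}(\tau_*)$ (viewed back in $[V,SG]$ via a tangential structure, i.e.\ $x=\eta^t(f,b)$), I will first kill the image $i^*x$ using pinch maps, and then kill the remaining top-cell part using an exotic sphere.

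\textbf{Step 1: the lower cells.} Write $i^*x=(a,b)\in\pi^s_{2n}\oplus\pi^s_{2n+1}$, identifying $[S^j,SG]\cong\pi^s_j$. By Lemma~\ref{lem:3.1}, a pinch map $p_{(l\circ\alpha)}$ contributes $(0,\pm\alpha)$. By Lemma~\ref{lem:3.2}, $p_{(s\circ\beta)}$ contributes $(\pm\beta,\epsilon)$ with $\epsilon\in\{0,\pm\beta\circ\eta\}$. In the stable range, $\pi_{4n+1}(S^{2n})\to\pi^s_{2n+1}$ and $\pi_{4n+1}(S^{2n+1})\to\pi^s_{2n}$ are surjective by Freudenthal, so I can choose the two elements independently. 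I will first choose $\beta$ with $\pm\beta=a$. I will then choose $\alpha$ with $\pm\alpha=b-\epsilon$.

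Next I compose the two pinch maps. Madsen's composition formula (tangential version) gives
\[
\eta^t(P_{l\alpha}\circ P_{s\beta})=\eta^t(P_{l\alpha})+(P_{l\alpha}^{-1})^*\eta^t(P_{s\beta}).
\]
Since pinch maps induce the identity on the lower skeleton up to homotopy, $i^*$ of the second term is unchanged. Hence $i^*$ of the composite equals $(a,b)$, and because a composite of pinch maps is again a pinch map $P_g$, we get $i^*\eta^t(P_g)=i^*x$. Replacing $x$ by $x-\eta^t(P_g)$ (via the composition formula applied to $P_g^{-1}\circ f$), we reduce to the case $i^*x=0$, i.e.\ $x=q^*y$ with $y\in[S^{4n+1},SG]$.

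\textbf{Step 2: the top cell.} Such a $q^*y$ lies in $\operatorname{im}(c^*)$. Its image under $\tau_*$ is $c^*\tau_*(y)$, and surjectivity of $\eta\colon hS(S^{4n+1})\to[S^{4n+1},G/O]$ onto $\ker\sigma$ realises this by $h_\Sigma\colon V\#\Sigma\to V$, as in the proof of Proposition~\ref{prop:cstar}. The surgery obstruction of $\tau_*y$ vanishes: it is the obstruction of a class that comes from $SG$, and the Kervaire obstruction in dimension $4n+1$ lies in $L_{4n+1}=0$ anyway. Combining with Step 1 via the composition formula shows that $x$ is realised by $P_g\circ h_\Sigma=\varphi^{(\Sigma,g)}$.

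\textbf{Expected obstacle.} The main difficulty is the sign and ambiguity bookkeeping in Step 1. Lemma~\ref{lem:3.2} only gives $\epsilon\in\{0,\pm\beta\eta\}$, and the pullback $(P^{-1})^*$ could in principle act nontrivially on the top-cell summand. I plan to handle this by solving only modulo $q^*$, which is harmless because Step 2 absorbs any top-cell discrepancy, and by noting that $\epsilon$ is determined once $\beta$ is fixed, so $\alpha$ can be chosen afterwards. A further subtlety is the compatibility of $\eta$ with $\tau_*$ on the top cell, i.e.\ making sure the $SG$-class on $S^{4n+1}$ is matched by some $\Sigma$ rather than only its $G/O$ image. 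Here I would use that $\Theta_{4n+1}\to\operatorname{coker}J$ is surjective when the Kervaire invariant is handled. If that fails, I would instead state and prove the result only up to $\operatorname{Im}J$ on the top cell, which is absorbed by changing the bundle map $b$.
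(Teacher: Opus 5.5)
Your outline is the paper's: the split sequence from the cell structure, Lemmas~3.1 and~3.2 with the composition formula and $i^*c^*=0$ for the two lower cells, and a homotopy sphere (using $L_{4n+1}(\mathbb{Z})=0$) for the top cell. The gap is the claim in Step~1 that $\pi_{4n+1}(S^{2n})\to\pi^s_{2n+1}$ is onto ``by Freudenthal''.

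Freudenthal only gives surjectivity of $\pi_q(S^{2n})\to\pi^s_{q-2n}$ for $q\le 4n-1$, and $q=4n+1$ is two degrees beyond that. Your claim is fine for the other factor, $\pi_{4n+1}(S^{2n+1})\to\pi^s_{2n}$, which sits exactly at the edge of the range. The first claim is also false in general. For $V_{8,2}$ ($n=3$) one has $\pi_{13}(S^6)\cong\mathbb{Z}/60$, which cannot surject onto $\pi^s_7\cong\mathbb{Z}/240$. The correction term $\beta\circ\eta$ from Lemma~3.2 does not help there: $\pi^s_6\cong\mathbb{Z}/2$ is generated by $\nu^2$, and $\nu\eta=0$. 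So by Lemmas~3.1--3.2 the $S^7$-coordinate of $i^*\eta^t(P_g)$ lies, for every pinch map, in a subgroup of order at most $60$. Step~1 therefore cannot hit an arbitrary $(a,b)\in[S^{2n},SG]\oplus[S^{2n+1},SG]$. (The paper's proof makes the same choice of $\alpha$, citing Madsen--Taylor--Williams rather than Freudenthal.)

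The fix is to run Step~1 only after applying $\tau_*$. This is all the theorem needs, since it concerns $\operatorname{Im}\tau_*\subset[V,G/O]$. After fixing $\beta$ (hence $\epsilon$), it suffices to choose $\alpha$ with $\pm\{\alpha\}\equiv b-\epsilon$ modulo $\operatorname{Im}\bigl(J\colon\pi_{2n+1}(SO)\to\pi^s_{2n+1}\bigr)$. The remaining discrepancy then lies in $\ker i^*=c^*[S^{4n+1},G/O]$ by (3.17), and your Step~2 removes it.

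What you must then actually prove is that $\pi_{4n+1}(S^{2n})\to\operatorname{coker}J_{2n+1}$ is onto. For $n=1,3$ this is vacuous, since $\pi^s_3$ and $\pi^s_7$ consist entirely of $\operatorname{Im}J$. For other $n$, an EHP argument gives surjectivity even onto $\pi^s_{2n+1}$:
\begin{itemize}
\item $2$-locally, every class in $\pi^s_{2n+1}$ comes from $\pi_{4n+2}(S^{2n+1})$ unless $\pi_{4n+3}(S^{2n+2})$ contains a Hopf invariant one element, i.e.\ unless $2n+2\in\{4,8\}$. Otherwise one subtracts a multiple of $[\iota_{2n+2},\iota_{2n+2}]$, whose Hopf invariant is $\pm2$.
\item Every class in $\pi_{4n+2}(S^{2n+1})$ desuspends to $S^{2n}$. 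Its Hopf invariant lies in $\ker\bigl(P\colon\mathbb{Z}/2\to\pi_{4n}(S^{2n})\bigr)$, and $P(\eta_{4n+1})=[\iota_{2n},\eta_{2n}]\neq0$ for $2n\notin\{2,6\}$ (Mahowald).
\item At odd primes, $\pi_{4n+1}(S^{2n})_{(p)}\cong\pi_{4n}(S^{2n-1})_{(p)}$, which lies in the stable range for $n\ge2$.
\end{itemize}
With this lemma in place of the Freudenthal claim, the rest of your argument goes through.
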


\begin{proof}

Recall that for a pinch map $P_f : M \xrightarrow{q} M \vee S^n \xrightarrow{\mathrm{Id} \vee f} M \vee M \xrightarrow{\Delta} M$, the induced map satisfies $(P_f)^{*}(x) = x + c^{*}f^{*}x$. Consider the composition $V \xrightarrow{P_{(\ell\circ\alpha)}} V \xrightarrow{P_{(s\circ\beta)}} V$. A direct computation using Lemmas~\ref{lem:3.1} and~\ref{lem:3.2} gives:
\begin{align*}
&i^{*}\,\eta^{t}\bigl(
    P_{(s\circ\beta)}\circ P_{(\ell\circ\alpha)},\;
    (\mathrm{Id}\mathbin{\#} b_{s}\circ\mu_{\beta})
    \circ
    (\mathrm{Id}\mathbin{\#} b_{\ell}\circ\mu_{\alpha})
  \bigr) \\[4pt]
&\quad =\;
  i^{*}\,\eta^{t} \bigl(
    P_{(s\circ\beta)},\;
    \mathrm{Id}\mathbin{\#} b_{s}\circ b_{\beta}
  \bigr)
  \;+\;
  i^{*} \bigl(P_{(s\circ\beta)}^{*}\bigr)^{-1}
  \,\eta^{t} \bigl(
    P_{(\ell\circ\alpha)},\;
    \mathrm{Id}\mathbin{\#} b_{\ell}\circ b_{\alpha}
  \bigr),\\
&\quad =\;  
  i^{*}\,\eta^{t} \bigl(
    P_{(s\circ\beta)},\;
    \mathrm{Id}\mathbin{\#} b_{s}\circ b_{\beta}
  \bigr) \;+i^{*}\eta^{t}\!\left(P_{(\ell\circ\alpha)},\;
    \mathrm{Id}\mathbin{\#}b_{\ell}\circ b_{\alpha}\right)
  +\, i^{*}c^{*}(s\circ\beta)^{*}\,
  \eta^{t}\!\left(P_{(\ell\circ\alpha)},\;
    \mathrm{Id}\mathbin{\#}b_{\ell}\circ b_{\alpha}\right).\\
&\quad=\;  
  i^{*}\,\eta^{t} \bigl(
    P_{(s\circ\beta)},\;
    \mathrm{Id}\mathbin{\#} b_{s}\circ b_{\beta}
  \bigr) \;+i^{*}\eta^{t}\!\left(P_{(\ell\circ\alpha)},\;
    \mathrm{Id}\mathbin{\#}b_{\ell}\circ b_{\alpha}\right). \quad (\text{since } i^*c^* = 0)\\
&\quad=\;
  \bigl(\pm\{\beta\},\,\pm\{\alpha\}\bigr)
  \;\;\text{or}\;\;
  \bigl(\pm\{\beta\},\,\pm\{\alpha\}+\{\beta\circ\eta\}\bigr). \tag{3.16}\label{eq:3.16}
\end{align*}

Given any $(r,s) \in [S^{2n},SG] \oplus [S^{2n+1},SG]$, we can choose $\beta$, $\alpha$ such that $r = \pm\{\beta\}$ and either $s = \pm\{\alpha\}$ or $s - \{\beta \circ \eta\} = \pm\{\alpha\}$~\cite[Lemma~5.3]{madsen}. Thus compositions of pinch maps realise all elements of $[S^{2n},SG] \oplus [S^{2n+1},SG]$. Let $D$ denote the set of pinch maps $P_{(a,b)}$ corresponding to $(a,b) \in [S^{2n},G/O] \oplus [S^{2n+1},G/O]$.
We have the following exact sequence:
\begin{equation}\label{eq:3.17}\tag{3.17}
0 \to [S^{4n+1},G/O] \xrightarrow{c^*} [V,G/O] \xrightarrow{i^*} [S^{2n},G/O] \oplus [S^{2n+1},G/O] \to 0
\end{equation}
Let $C = \operatorname{Im}(c^*)$. Any element of $[V,SG]$ has the form $c^*(x) + d$. Every element on the left comes from $\Theta_{4n+1}$, so $c^*(x) = c^*(\eta_s(\Sigma_x)) = \eta(h_{\Sigma_x})$. By~\cite{madsen}, $\eta^t(P_{(a,b)} \circ h_{\Sigma_x}) = \eta^t(P_{(\alpha,\beta)}) + c^*(x)$, so the normal invariant map is surjective onto the image of $[V,SG]$ in $[V,G/O]$.
\end{proof}

\subsubsection*{3.1.B.\ Normal invariants of pinch maps: CW representation}

We now reformulate these results in terms of the CW structure of $V$, which will be needed for the rigidity analysis in \S\,4.

\begin{theorem}\label{thm:3.4}
Every element of $\operatorname{Im}(\tau^*) \subset [V, SG]$ has an inverse of the form $\varphi^{(\Sigma,g)} : V \mathbin{\#} \Sigma \xrightarrow{h_\Sigma} V \xrightarrow{P_g} V$, where $g$ factors through the lower skeleton of $V$.
\end{theorem}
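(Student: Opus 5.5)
The plan is to deduce Theorem~\ref{thm:3.4} from Theorem~\ref{thm:3.3} in two moves. First, every pinch map used there is already a CW-skeletal pinch map. Second, inverses in $\operatorname{Im}(\tau_*)$ can again be realised by maps of the same shape.

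\textbf{Step 1: the pinch maps are skeletal.} Use the CW structure $V \simeq (S^{2n}\vee S^{2n+1})\cup_\rho e^{4n+1}$ of~\cite[Theorem~7.10]{james}. The maps $l\colon S^{2n}\to V$ and $s\colon S^{2n+1}\to V$ are, up to homotopy, the inclusions of the two wedge summands of the $(4n)$-skeleton. This holds because they induce isomorphisms on $H_{2n}$ and $H_{2n+1}$, as checked in the proofs of Lemmas~\ref{lem:3.1} and~\ref{lem:3.2}, and cellular approximation applies. Via the isomorphism $l_*+s_*$, any $x\in\pi_{4n+1}(V)$ is therefore homotopic to a map
\[
g=\alpha\vee\beta\colon S^{4n+1}\to S^{2n}\vee S^{2n+1}\subset V^{(4n)},
\]
so $P_x=P_g$ with $g$ factoring through the lower skeleton. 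A composite $P_{s\circ\beta}\circ P_{l\circ\alpha}$ is again a single pinch map. Using $(P_f)^*(x)=x+c^*f^*x$ and the fact that $P_{s\circ\beta}$ fixes the lower skeleton, one checks that it equals $P_{g'}$ with $g'$ the sum of the two attaching elements in $\pi_{4n+1}(V^{(4n)})$, up to the correction coming from $\rho$. Since $\rho$ is stably trivial, this correction does not affect $i^*\eta^t$.

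\textbf{Step 2: producing inverses.} Take $z\in\operatorname{Im}(\tau_*)$; it is the normal invariant of some tangential homotopy equivalence. I will realise $-z$, which is what ``inverse'' means in the Section~5 usage. By the split sequence~\eqref{eq:3.17} and its $SG$ analogue, write $-z=c^*(x)+d$. Apply the realisation argument in the proof of Theorem~\ref{thm:3.3} to $i^*(-z)=(-r,-s)$: choose $\beta,\alpha$ with $\mp\{\beta\}=-r$, and solve for $\alpha$ as in~\eqref{eq:3.16}, allowing for the possible $\{\beta\circ\eta\}$ term via~\cite[Lemma~5.3]{madsen}. This gives a skeletal $g$ with
\[
i^*\eta^t(P_g)=i^*(-z).
\]
The remaining discrepancy lies in $\operatorname{Im}(c^*)$ and comes from $\Theta_{4n+1}$. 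Precompose with $h_\Sigma$ and use Madsen's composition formula together with $\eta(h_\Sigma)=c^*\eta(\Sigma)$ and $(P_g^{-1})^*c^*=c^*$ (pinch maps have degree one). This yields
\[
\eta^t(\varphi^{(\Sigma,g)})=-z,
\]
so that $\varphi^{(\Sigma,g)}$ composed with a representative of $z$ has trivial normal invariant.

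\textbf{Main obstacle.} The hardest step is the sign and indeterminacy bookkeeping. Lemmas~\ref{lem:3.1} and~\ref{lem:3.2} determine components only up to $\pm$, and only up to the ambiguity $0$ or $\beta\circ\eta$. To get exactly $-z$ rather than an element differing by such terms, I would first replace $\alpha$ by $-\alpha$, which is possible since these groups are abelian. I would then handle the $\beta\circ\eta$ ambiguity by adjusting $\alpha$ by $\beta\circ\eta$, which lies in the $\pi_{4n+1}(S^{2n})$-summand's stable image. Finally I would verify, using $i^*c^*=0$, that the top-cell component is absorbed entirely by the choice of $\Sigma$.
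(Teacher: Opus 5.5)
Your reduction to Theorem~\ref{thm:3.3} is the same reduction the paper uses. However, your Step~1 identification fails, and it fails at exactly the point the paper's proof of this theorem addresses. Cellular approximation plus ``iso on $H_{2n+1}$'' does not show that $s$ is the wedge-summand inclusion. Cellular approximation only puts $q\circ s$ into $V^{(4n)}=S^{2n}\vee S^{2n+1}$. For $n\ge 2$ we have $\pi_{2n+1}(S^{2n}\vee S^{2n+1})\cong\pi_{2n+1}(S^{2n})\oplus\pi_{2n+1}(S^{2n+1})\cong\mathbb{Z}_2\{\eta\}\oplus\mathbb{Z}$, since the first cross term $\pi_{2n+1}(S^{4n})$ vanishes. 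So homology only gives $q\circ s\simeq\pm i\circ(j+\epsilon\eta)$ with $\epsilon\in\{0,1\}$. The $\eta$-component is invisible to $H_*$, and nothing in your argument rules out $\epsilon=1$. (For $\ell$ your argument is fine, since $\pi_{2n}(S^{2n}\vee S^{2n+1})\cong\mathbb{Z}$.)

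The paper's proof handles precisely the case $q\circ s\simeq\pm i\circ(j+\eta)$:
\begin{itemize}
\item One has $q_*(s\circ\beta+\ell\circ\eta\circ\beta)=i\circ j\circ\beta$.
\item Hence the CW pinch map along the $S^{2n+1}$-summand is $P_\beta=P_{s\circ\beta}\circ P_{\ell\circ\eta\circ\beta}$, not $P_{s\circ\beta}$.
\item Lemmas~\ref{lem:3.1} and~\ref{lem:3.2} then still give $i^*\eta^t(P_\beta)=(\pm\{\beta\},\pm\{\eta\circ\beta\})$ or $(\pm\{\beta\},0)$; the extra $\ell\circ\eta\circ\beta$ just swaps the two alternatives.
\end{itemize}
Only after this check do the \S3.1.A formulas apply to CW pinch maps $P_{\alpha\vee\beta}$, which is the form Theorem~\ref{thm:4.4} relies on. So your assertion that $P_x=P_g$ with $g=\alpha\vee\beta$, for the \emph{same} $\alpha,\beta$ as in $x=\ell_*\alpha+s_*\beta$, is unjustified. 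When $\epsilon=1$ the skeletal map is, up to signs, $(\alpha+\eta\circ\beta)\vee\beta$.

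For the bare existence assertion the gap is repairable without the paper's computation. You only need $\ell\circ\alpha$ and $s\circ\beta$ to factor through $V^{(4n)}$, and cellular approximation of $\ell\vee s$ already gives that. Your Step~2 therefore does produce some $\varphi^{(\Sigma,g)}$ with skeletal $g$. What you lose is the match between the CW parameters of $g$ and those in Lemmas~\ref{lem:3.1}--\ref{lem:3.2}, which is the real content of \S3.1.B.

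A smaller point concerns terminology. ``Inverse'' in this paper means a preimage under $\eta$ in the structure set (see the opening of \S5), not an additive inverse. Realising $-z$ for every $z$ is equivalent, since $\operatorname{Im}(\tau_*)$ is a subgroup. But your closing claim that $\varphi^{(\Sigma,g)}$ ``composed with a representative of $z$'' has trivial normal invariant is not meaningful as stated. The maps are not composable as written, and Madsen's formula would in any case introduce a pullback $(\varphi^{-1})^*$.
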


\begin{proof}
We already know $P_{(\ell\circ\alpha)} \simeq P_{\alpha}$ up to the homotopy equivalence $q : V \to V_{\mathrm{cw}}$~\cite[Lemma~4.1]{nomura}.
So, $i^{*}\eta^{t}(P_{\alpha})=i^{*}\eta^{t}(P_{(\ell\circ\alpha)})
=(\pm\{\alpha\},0)$. For the other map, $P_{(s\circ\beta)}\simeq P_{\beta}$ if $q\circ s\simeq\pm i\circ j$. Then, $i^{*}\eta^{t}(P_{\beta})=(\pm\{\beta\},0) \ \
\text{or,} \ \ (\pm\{\beta\},\{\beta\circ\eta\})$. Otherwise, $q\circ s\simeq\pm i\circ (j + \eta) $, and we have the following diagram where upper triangle commutes and lower square commute up to homotopy:\\
\begin{equation}\label{eq:3.18}\tag{3.18}
\begin{tikzcd}[row sep=1.6em, column sep=3em]
 & S^{2n} \arrow[dr, "\ell"] & & \\
S^{4n+1}
  \arrow[r, "\beta"']
  \arrow[ur, "\eta\circ\beta"]
& S^{2n+1}
    \arrow[r, "s"]
    \arrow[d, "j+\eta"']
& V
    \arrow[d, "q"', "\simeq" right]
& \\
 & S^{2n+1}\vee S^{2n}
    \arrow[r, "i"']
    & V_{cw} 
\end{tikzcd}
\end{equation}
This gives the following relation between maps:
$q_{*}(s\circ\beta+\ell\circ\eta\circ\beta)
  = q\circ s\circ\beta + q\circ\ell\circ\eta\circ\beta
  = q\circ s\circ\beta + i\circ\eta\circ\beta
  = i\circ(j+\eta)\circ\beta + i\circ\eta\circ\beta
  = i\circ j\circ\beta + i\circ\eta\circ\beta + i\circ\eta\circ\beta
  = i\circ j\circ\beta.$\\
Then, $P_{\beta} = P_{(s\circ\beta+\ell\circ\eta\circ\beta)}
= P_{(s\circ\beta)}\circ P_{(\ell\circ\eta\circ\beta)}$ up to the homotopy equivalence between $V$ and $V_{cw}$. So, $i_{*}\eta^{t}(P_{\beta})
= i^{*}\eta^{t}(P_{(s\circ\beta)}) + i^{*}\eta^{t}(P_{(\ell\circ\eta\circ\beta)}) = \bigl(\pm\{\beta\},\,\pm\{\eta\circ\beta\}\bigr) \quad\text{or,}\quad
\bigl(\pm\{\beta\},\,0\bigr) $, which is the same in both cases. By the same argument as for Theorem~\ref{thm:3.3} we obtain the result.
\end{proof}

\subsection{Calculating Normal Invariants for $V_{2n+1,2}$}

We have the fibration
\[
S^{2n-1} \xrightarrow{l} V_{2n+1,2} \xrightarrow{\pi} S^{2n}
\]
We also have a CW structure $V_{2n+1,2} \simeq (S^{2n} \cup_2 S^{2n+1}) \cup_{\rho} e^{4n+1}$ with $\Sigma^2\rho \simeq 0$~\cite[Theorem~7.10]{james}.

There are two exact sequences depending on $b \pmod{4}$:
\[
1 \to \operatorname{Tor}\,\pi_{2b-3}(V_{b,2}) \to \mathcal{E}(V_{b,2}) \to \mathbb{Z}_2 \to 1,
\quad \text{for } b \neq 3,5,9,\ b \equiv 1\pmod{4},
\]
and
\[
1 \to \operatorname{Tor}\,\pi_{2b-3}(V_{b,2}) \xrightarrow{\cong} \mathcal{E}(V_{b,2}) \to 1,
\quad \text{for } b \neq 3,5,9,\ b \equiv 3\pmod{4},
\]
where $b = 2n+1$. In both cases the first map is given by pinch maps. Here $\mathbb{Z}_2 = \mathcal{E}(S^{2n-1} \cup_2 e^{2n}) = \{1, 1 + i\,\eta_P\}$, so an element $x$ of $\mathcal{E}(V)$ corresponding to $\mathrm{Id} + i\,\eta_P$ is an extension to $V$~\cite[Theorem~2.4]{nomura}. We compute the normal invariants of the pinch maps; the normal invariant of the $\mathbb{Z}_2$-extension is discussed in Remark~\ref{rem:4.3}.

\subsubsection*{3.2.A.\ Normal invariants of pinch maps}
There is a commutative diagram of exact sequences~\cite[Corollary~5.9]{madsen}:
\begin{equation}\label{eq:3.19}\tag{3.19}
\begin{tikzcd}[row sep=2em, column sep=large]
\{S^{2n}, S^0\}
  \arrow[r, "q^*"]
&
\{S^{2n-1} \cup_{2} e^{2n}, S^0\}
  \arrow[r, "i^*"]
&
\{S^{2n-1}, S^0\}
\\
\pi_{4n-1}^s(S^{2n-1})
  \arrow[r, "i_*"]
  \arrow[u, "\cong\, D"']
&
\pi_{4n-1}^s(S^{2n-1} \cup_{2} e^{2n})
  \arrow[r, "q_*"]
  \arrow[u, "\cong\, D"']
&
\pi_{4n-1}^s(S^{2n})
  \arrow[u, "\cong\, D"']
\\
&
\pi_{4n-1}(S^{2n-1} \cup_{2} e^{2n})
  \arrow[u, "\Sigma^K", "onto"']
&
\end{tikzcd}
\end{equation}

\begin{theorem}\label{thm:3.5}
Given any $\{\alpha\} \in \{S^{2n},S^0\}$ where $\alpha \in \pi_{4n-1}(S^{2n-1})$, we have $q^*\{\alpha\} = i^*\tilde{\eta}^t(p_{(\pm i_*\alpha)},\mathrm{Id} \# b_l \circ b_\alpha)$. Hence every element in the image of $q^*$ is realised by normal invariants of certain pinch maps.
\end{theorem}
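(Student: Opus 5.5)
The proof will mirror that of Lemma~\ref{lem:3.1}, now with the Moore-space skeleton $P := S^{2n-1}\cup_2 e^{2n}$ of $V = V_{2n+1,2}$ and the fibre inclusion $l\colon S^{2n-1}\to V$.

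\emph{Setting up the pinch map.} Fix $\alpha\in\pi_{4n-1}(S^{2n-1})$ and set $x = l\circ\alpha\colon S^{4n-1}\to V$, which up to the equivalence $V\simeq V_{cw}$ is $\pm i_*\alpha$ (compare \cite[Lemma~4.1]{nomura}). Since $V$ is stably parallelizable, $x^*\nu_V$ is trivial, so $p_{(\pm i_*\alpha)}$ is a tangential homotopy equivalence. Lemmas~\ref{lem:SW-dual-factoring} and~\ref{lem:eta-pinch}, applied with $t=l$ and $y=\alpha$, give a covering bundle map $\mathrm{Id}\# b_l\circ b_\alpha$ such that
\[
\tilde\eta^t(p_{(\pm i_*\alpha)},\mathrm{Id}\# b_l\circ b_\alpha)\colon \Sigma^k V\xrightarrow{\Sigma^k l^!}\Sigma^k\operatorname{Th}(\nu_l)\xrightarrow{D(\operatorname{Th}(b_\alpha))}\Sigma^k S^{4n-1}_+\xrightarrow{\Sigma^k c_{s_0}} S^k,
\]
exactly as in \eqref{eq:3.4}.

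\emph{Identifying the restriction.} Next we compute the restriction along the skeleton inclusion $i\colon \Sigma^k P\to\Sigma^k V$ (note that here $i$ lands in $\{P,S^0\}$). Trivialising $\nu_{S^{4n-1}}$ and $l^*\nu_V$ as in \eqref{eq:3.5} gives $\operatorname{Th}(b_\alpha)\simeq \Sigma^k\alpha\vee \mathrm{Id}\colon S^{4n-1+k}\vee S^k\to S^{2n-1+k}\vee S^k$. Its dual is $\{\alpha\}\vee\mathrm{Id}\colon S^{k}\vee S^{2n+k}\to S^k\vee S^{4n-1+k}$ (after regrading), using the self-duality of stable maps between spheres. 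Thus the composite in \eqref{eq:3.4} factors as
\[
\Sigma^kP\to\Sigma^kV\xrightarrow{\Sigma^kl^!}\Sigma^k\operatorname{Th}(\nu_l)\simeq S^{2n+k}\vee S^{4n-1+k}\xrightarrow{\{\alpha\}\vee\mathrm{Id}}S^k\vee S^{4n-1+k}\xrightarrow{\tilde c}S^k,
\]
where $\tilde c|_{S^k}=\pm\mathrm{Id}$ by the same $H_k$ argument as in Lemma~\ref{lem:3.1}.

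\emph{The key step.} It remains to show that the composite $\Sigma^kP\to\Sigma^kV\to\Sigma^k\operatorname{Th}(\nu_l)\to S^{2n+k}$ is $\pm\Sigma^k q$, where $q\colon P\to S^{2n}$ is the collapse of the bottom cell. The target is a sphere of dimension $2n+k$, and $\{P,S^{2n}\}$ in degree zero is detected by $H^{2n}$ (it is $\mathbb Z$ generated by $q$, the bottom cell contributing nothing in this degree). So it suffices to check that the map is an isomorphism on $H^{2n+k}(-;\mathbb Z)$ up to sign. By duality \cite[I.4.13]{browder}, $H^{2n}(l^!)$ corresponds to $H_{2n-1}(\operatorname{Th}(b_l))$, which is $l_*\colon H_{2n-1}(S^{2n-1})\to H_{2n-1}(V)=\mathbb Z/2$, the reduction mod $2$. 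Hence $H^{2n}$ of the composite is the dual statement: on $H^{2n}(\,\cdot\,;\mathbb Z)$ the map $l^!$ sends a generator of $H^{2n}(S^{2n})$ to a generator of $H^{2n}(V)\cong\mathbb Z/2$. This means the composite, read in $\{P,S^0\}$, is $\{\alpha\}\circ(\pm q)=q^*\{\pm\alpha\}$ modulo the ambiguity coming from maps $P\to S^{2n}$ that vanish in integral $H^{2n}$.

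\emph{The main obstacle.} I expect the hardest step to be this last one, because integral cohomology only determines the map mod $2$. My first approach is to work with $\mathbb Z$-coefficients together with the diagram \eqref{eq:3.19}: since $l^!$ restricted to the $2n$-cell is a degree-one collapse of the top cell of the Moore space (the normal disc of $l$ at the $2n$-cell), I would argue geometrically that $i\circ l^!$ factors through $q$ with degree $\pm1$. Failing that, I would use that $\pm\alpha$ ranges over all signs, so the residual mod-$2$ indeterminacy is absorbed by replacing $\alpha$ by $-\alpha$. Finally, since $q^*$ is compatible with $D$ and $i_*$ in \eqref{eq:3.19}, every element of $\operatorname{Im}q^*$ is of the form $q^*\{\alpha\}$ with $\alpha$ unstable (by Freudenthal, $\pi_{4n-1}(S^{2n-1})\to\pi^s_{2n}$ is onto). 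This yields the realisation statement, and passing to $[V,SG]$ via $[1]*$ as in Lemma~\ref{lem:3.1}(ii) completes the argument.
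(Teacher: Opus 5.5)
Your route is the paper's: you factor $i^*\tilde\eta^t$ through $\Sigma^k\operatorname{Th}(\nu_l)\simeq S^{2n+k}\vee S^{4n-1+k}$, use self-duality of $\{\alpha\}$, and identify the map $\Sigma^kP\to S^{2n+k}$ by cohomology. However, the step you flag as the main obstacle is left open because of a miscomputation, and your fallback does not close it.

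\textbf{The group $\{P,S^{2n}\}$.} It is not $\mathbb{Z}$. Applying $\{-,S^{2n}\}$ to the cofibre sequence $S^{2n-1}\xrightarrow{2}S^{2n-1}\xrightarrow{i}P\xrightarrow{q}S^{2n}\xrightarrow{2}S^{2n}$ gives the exact sequence $\{S^{2n},S^{2n}\}\xrightarrow{\times 2}\{S^{2n},S^{2n}\}\xrightarrow{q^*}\{P,S^{2n}\}\to\{S^{2n-1},S^{2n}\}=0$. Hence $\{P,S^{2n}\}\cong\mathbb{Z}/2$, generated by $q$, and $-q=q$. This is exactly the fact the paper uses.

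With it there is no indeterminacy. Your own computation shows that $l^!$ sends the Thom class to the generator of $H^{2n}(V)\cong\mathbb{Z}/2$, which restricts isomorphically to $H^{2n}(P)$. So the composite $\Sigma^kP\to S^{2n+k}$ is nonzero on $H^{2n+k}$, with $\mathbb{Z}$ or $\mathbb{Z}/2$ coefficients. It therefore equals $\Sigma^kq$, and $\pm q^*\{\alpha\}$ follows at once.

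As written, though, two things are wrong. First, your criterion (``an isomorphism on $H^{2n+k}(-;\mathbb{Z})$ up to sign'') fails even for $q$ itself, since $q^*\colon H^{2n}(S^{2n};\mathbb{Z})\to H^{2n}(P;\mathbb{Z})$ is $\mathbb{Z}\to\mathbb{Z}/2$. Second, under your premise $\{P,S^{2n}\}\cong\mathbb{Z}$, the undetermined possibilities would be all odd multiples $(2r+1)q$. Replacing $\alpha$ by $-\alpha$ cannot absorb those, and the geometric factorization you sketch is never carried out.

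\textbf{The appeal to Freudenthal.} Your last paragraph invokes Freudenthal, and that step is invalid. $\pi_{4n-1}(S^{2n-1})$ lies outside the stable range, since $4n-1>2(2n-1)-1$; the same holds for $\pi_{4n}(S^{2n})$. So $\pi_{4n-1}(S^{2n-1})\to\pi^s_{2n}$ need not be onto. The statement only asserts the identity for stable classes $\{\alpha\}$ that desuspend to $S^{2n-1}$, and the realisation clause should be read for those classes only. Compare the surjectivity hypotheses that the paper imposes explicitly on $\Omega_1,\Omega_2$ in \S5 instead of deducing them.
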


\begin{proof}
Let $S^{4n-1} \xrightarrow{\alpha} S^{2n-1}$ be a map and let $p_{\alpha}$ denote the corresponding pinch map, giving the following commutative diagram:
\begin{equation}\label{eq:3.20}\tag{3.20}
\begin{tikzcd}
S^{4n-1} \times \mathbb{R}^k \arrow[r, "\alpha \times \mathrm{Id}"] \arrow[d, "\cong", "\psi"{left}]
  & S^{2n-1} \times \mathbb{R}^k \arrow[r, "l \times \mathrm{Id}"] \arrow[d, "\varphi", "\cong"{left}] 
  & V \times \mathbb{R}^k \arrow[d, "\widetilde{\varphi}", "\cong"'] \\
\nu_{S^{4n-1}} \arrow[r, "b_{\alpha}"'] \arrow[d] 
  & l^{*}\nu_{V} \arrow[r, "b_{l}"'] \arrow[d] 
  & \nu_{V} \arrow[d] \\
S^{4n-1} \arrow[r, "\alpha"] 
  & S^{2n-1} \arrow[r, "l"'] 
  & V
\end{tikzcd}
\end{equation}
Taking Thom-space level maps on the upper squares gives (here $D = D_{4n-1+2k}$):
\begin{equation}\label{eq:3.21}\tag{3.21}
\begin{tikzcd}[column sep=large]
S^{4n-1+k}
  \arrow[r]
  \arrow[d, "D", dotted] 
& \operatorname{Th}(\nu_{S^{4n-1}}) 
  \arrow[r, "\operatorname{Th}(b_\alpha)"] 
  \arrow[d, "D",dotted] 
& \operatorname{Th}(l^* \nu_V) 
  \arrow[r, "\operatorname{Th}(b_l)"] 
  \arrow[d, "D",dotted] 
& \operatorname{Th}(\nu_V) 
  \arrow[d, "D",dotted] \\
S^k 
  & \Sigma^k S_+^{4n-1} 
      \arrow[l, "\Sigma^k c_{S^0}"'] 
  & \Sigma^k \operatorname{Th}(\nu(s)) 
      \arrow[l, "D(\operatorname{Th}(b_\alpha))"'] 
  & \Sigma^k V_+ 
      \arrow[l, "\Sigma^k l_+^1"']
\end{tikzcd}
\end{equation}
The map $i^*\tilde{\eta}^t(p_\alpha, \mathrm{Id} \# b_l \circ b_\alpha)$ is the bottom composite of the following commutative diagram:
\begin{equation}\label{eq:3.22}\tag{3.22}
\begin{tikzcd}[column sep=3.5em,row sep=3.0em]
\Sigma^{k} V
  \arrow[r, "\Sigma^{k} l^{!}"]
 &
\Sigma^{k} \operatorname{Th}(\nu_{l})
  \arrow[r, "D(\operatorname{Th}(b_{\alpha}))"]
  \arrow[d, "\simeq"{left}, "D(h)"]
&
\Sigma^{k} S^{4n-1}_{+}
  \arrow[r,"\Sigma^k C_{s_0}"]
  \arrow[d,"D(m)"{left},"\simeq"{right}]
&
S^{k}
\\
\Sigma^{k} (S^{2n-1} \cup_2 e^{2n})
  \arrow[u,"i"]
  \arrow[r, "q"']
&
S^{2n+k} \;\vee\; S^{4n-1+k}
  \arrow[r, "{\{\alpha\} \vee \mathrm{Id}}"']
&
S^{k} \vee S^{4n-1+k}
  \arrow[ru, "\tilde{c}", bend right=15]
\end{tikzcd}
\end{equation}
Here we used that the dual of a wedge is the wedge of the duals~\cite[Theorem~6.5, Corollary~6.7]{funcspaces}. The lower-left map is the quotient map $q$ because $[S^{2n-1+k} \cup_2 e^{2n+k}, S^{2n+k}] \cong \mathbb{Z}_2\{q\}$, and $\Sigma^k\iota^!$ is nonzero in $H^{2n+k}(-;\mathbb{Z}_2)$, forcing the cohomology map to be $q$. Hence $i^*\tilde{\eta}^t(p_{(i_*\alpha)}, \mathrm{Id} \# b_l \circ b_\alpha) = \pm q^*\{\alpha\}$.

\end{proof}

Let $S^{4n-1} \xrightarrow{\beta} S^{2n-1} \cup_2 e^{2n}$ be a map with $q^*\{\beta\} = \hat{\beta}$. By~\cite[Lemmas~6.5,~7.4]{crowley:kervaire}, $i^*j^*\eta^t(P_{\beta \circ j}, \mathrm{Id} \# \beta) = [1]*i^*j^*\hat{\eta}^t(\beta \circ j, \bar{\beta})$. However, we cannot directly apply these techniques since $S^{2n-1} \cup_2 e^{2n}$ is not a submanifold of $V$. We proceed as follows.

Let $\beta : S^{4n-1} \to V$ be a map factoring through $S^{2n-1} \cup_2 e^{2n} \xrightarrow{j} V$. We have the following commutative diagrams:
\[
    \begin{minipage}{0.35\textwidth}
        \centering
        \begin{tikzcd}[row sep=large, column sep=small]
            S^{4n-1} \times \mathbb{R}^k \arrow[r, "\beta \times \mathrm{Id}"] \arrow[d, "\cong"'] 
            & V \times \mathbb{R}^k \arrow[d, "\cong"] \\
            \nu_{S^{4n-1}} \arrow[r, "\bar{\beta}"] \arrow[d] 
            & \nu_V \arrow[d] \\
            S^{4n-1} \arrow[r, "\beta"] 
            & V
        \end{tikzcd}
        \textit{Left:}~$(3.23)$
    \end{minipage}
    \quad \vrule \quad
    \begin{minipage}{0.55\textwidth}
        \centering
        \begin{tikzcd}[row sep=large, column sep=small]
            \Sigma^k S^{4n-1}_+ \arrow[r, "\Sigma^k \beta_+"] \arrow[d, "\cong"'] 
            & \Sigma^k V_+ \arrow[d, "\cong"] \\
            \mathrm{Th}(S^{4n-1} \times \mathbb{R}^k) \arrow[r, "T(\beta \times \mathrm{Id})"] \arrow[d, "\cong"'] 
            & \mathrm{Th}(V \times \mathbb{R}^k) \arrow[d, "\cong"] \\
            \mathrm{Th}(\nu_{S^{4n-1}}) \arrow[r, "T(\bar{\beta})"] 
            & \mathrm{Th}(\nu_V)
            \end{tikzcd}
            \textit{Right:}~$(3.24)$
    \end{minipage}
\]

In the right diagram, call the composite isomorphism on the left $\bar{\xi}$ and on the right $\xi$. This gives the following diagram and its Spanier--Whitehead dual:
\begin{equation}\label{eq:3.25}\tag{3.25}
\begin{tikzcd}[row sep=2em, column sep=3em]
& & S^{4n-1+k} \vee S^k \arrow[r, "\beta", dashed, bend left=20]
\arrow[r,"\Sigma^k \beta \vee Id"{below}]
\arrow[d, "\bar{\tau_+}","\simeq"{left}]
& (S^{2n-1+k} \cup_2 e^{2n+k}) \vee S^{4n-1+k} \vee S^k \arrow[d,"\tau_+","\simeq"{left}]\\
& & \Sigma^k S^{4n-1}_+ \arrow[r,"\Sigma^k \beta_+" ]  \arrow[d, "\cong"', "\bar{\xi}"]
& \Sigma^k V_+ \arrow[d, "\cong", "\xi","\simeq"{left}] \\
S^{4n-1+k} \arrow[rruu,"\mathcal{L}"]
\arrow[rru, "\mathcal{\psi}", thick]  \arrow[rr, "\text{coll.}"{below}]
& 
& \mathrm{Th}(\nu_{S^{4n-1}}) \arrow[r,"T(\bar{\beta})"]
& \mathrm{Th}(\nu_V)
\end{tikzcd}
\end{equation}
\begin{center}
(Take dual $D = D_{4n-1+2k}$)
\end{center}
\begin{equation}\label{eq:3.26}\tag{3.26}
\begin{tikzcd}[row sep=1.8 em, column sep=large]
(S^{2n-1+k} \cup_2 e^{2n+k}) \vee S^{4n-1+k} \vee S^k 
    \arrow[r, "D\beta \vee \mathrm{Id}"] 
    \arrow[d, "\simeq"', "D(\bar{\tau}_+ \circ \bar{\xi})"] 
& S^{4n-1+k} \vee S^k 
    \arrow[d, "\simeq", "D(\tau_+ \circ \xi)"'] 
    \arrow[dr] \\
\Sigma^k V_+ 
    \arrow[r,"D(T(\bar{\beta})"] 
& \Sigma^k S^{4n-1}_+ 
    \arrow[r,"C_{s_0}"] 
& S^k
\end{tikzcd}
\end{equation}
Call the lower map $\widetilde{N}^t(\beta)$. We have the following sequence of adjunctions:
\begin{equation}\label{eq:3.27}\tag{3.27}
\begin{tikzcd}[row sep=0.5em, column sep=large]
{[\Sigma^k V_+, S^k]_*} \arrow[r, "\cong", "\text{adj}"{below}] 
& {[V_+, \Omega^k S^k]_*} \arrow[r,dotted] 
& {[V, (\Omega^k S^k)_0]_*} \arrow[r, "\cong", "\text{adj}"{below}] 
& {[\Sigma^k V, S^k]_*} \\
\widetilde{N}^t(\beta) \arrow[rr, mapsto] 
&& \hat{\eta}^t(\beta \circ j,\bar{\beta}) \arrow[r, mapsto] 
& \text{adj } \hat{\eta}^t(\beta \circ j,\bar{\beta})
\end{tikzcd}
\end{equation}
The dotted arrow is not a map; our element merely passes through it. By~\cite[Lemmas~6.5,~7.4]{crowley:kervaire}, $\eta^t(P_\beta, \mathrm{Id} \# \bar{\beta}) = \eta^t([V,\mathrm{id},\mathrm{Id}] + [S^{4n-1},\beta,\bar{\beta}]) = [1] * \hat{\eta}^t(\beta \circ j,\bar{\beta})$. We therefore want $\operatorname{adj}\hat{\eta}^t(\beta \circ j,\bar{\beta})$; more precisely, we compute $i^*j^*\operatorname{adj}\hat{\eta}^t(\beta \circ j,\bar{\beta})$, where $S^{2n-1} \xrightarrow{i} S^{2n-1} \cup_2 e^{2n} \xrightarrow{j} V$ are composition of inclusions. We first establish:

\begin{theorem}\label{thm:3.6}
\begin{equation}\label{eq:3.28}\tag{3.28}
\begin{tikzcd}[row sep=0.5em, column sep=1.7em]
{[\Sigma^k V \vee S^k,S^k]}_*  
  \arrow [r,"\tau_*","\equiv"{below}]
& {[\Sigma^k V_+, S^k]_*}
  \arrow[r, "\cong", "\text{adj}"'] 
& {[V_+, \Omega^k S^k]_*} 
  \arrow[r,dotted] 
& {[V, (\Omega^k S^k)_0]_*} 
  \arrow[r, "\cong", "\text{adj}"'] 
& {[\Sigma^k V, S^k]_*} \\
\{\beta\} \vee 0 
  \arrow[rrrr] 
&&&& \{\beta\}
\end{tikzcd}
\end{equation}
where $\{\beta\}\vee 0$ passes through $[V,(\Omega^k S^k)_0]$, and $\tau$ is the natural basepoint-preserving homotopy equivalence $\Sigma^k V_+ \simeq \Sigma^k V \vee S^k$.
\end{theorem}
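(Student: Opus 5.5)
We will trace the element $\{\beta\}\vee 0$ through each map of \eqref{eq:3.28}, using only the naturality of the splitting $\Sigma^k V_+\simeq \Sigma^k V\vee S^k$ and the definition of the translation between components of $\Omega^kS^k$.

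\emph{Step 1: the splitting.} The equivalence $\tau:\Sigma^k V\vee S^k\to\Sigma^k V_+$ is obtained by choosing a basepoint $v_0\in V$. The summand $S^k=\Sigma^k(\{v_0\}_+)$ is included via $\Sigma^k$ of $\{v_0\}_+\subset V_+$. The summand $\Sigma^kV$ is included via a section of the cofibre sequence $S^k\to\Sigma^kV_+\to\Sigma^kV$, which splits after one suspension; $k$ is large, so this is available. Under $\tau_*$, the class $\{\beta\}\vee 0$ therefore becomes a map $f:\Sigma^kV_+\to S^k$ with two properties: its restriction to the bottom sphere $\Sigma^k(\{v_0\}_+)$ is null, and its composite with the section of the collapse $\Sigma^kV_+\to\Sigma^kV$ is $\{\beta\}$. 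Equivalently, $f=\{\beta\}\circ\mathrm{pr}$, where $\mathrm{pr}:\Sigma^kV_+\to\Sigma^kV$ is the collapse of the bottom cell.

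\emph{Step 2: adjunction.} The adjoint $\operatorname{adj}(f):V_+\to\Omega^kS^k$ is, as a free map $V\to\Omega^kS^k$, the composite $V\to V/\{v_0\}=V\xrightarrow{\operatorname{adj}\{\beta\}}\Omega^kS^k$. Because the restriction to $\{v_0\}_+$ is null, $\operatorname{adj}(f)$ sends $v_0$ to the constant loop. Hence its degree is $0$ and it lands in the component $(\Omega^kS^k)_0$. This is exactly why the dotted arrow is legitimate: the element passes into $[V,(\Omega^kS^k)_0]_*$ as a pointed map without any translation by $[1]*$.

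\emph{Step 3: back to $[\Sigma^kV,S^k]_*$.} Since $\operatorname{adj}(f)$ is already pointed and equals $\operatorname{adj}\{\beta\}$ by construction, adjoining once more returns $\{\beta\}$. The whole argument is therefore the observation that the pointed-to-free isomorphism $[V_+,X]_*\cong[V,X]$ restricted to the degree-$0$ component, followed by $[V,X_0]\cong[V,X_0]_*$ (valid because $V$ is simply connected and $X_0$ is an $H$-space, so free and pointed classes agree), is inverse to the adjunction on the $\Sigma^kV$ summand.

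\emph{Main obstacle.} The delicate point is choosing $\tau$ so that the $\Sigma^kV$ summand corresponds to $\mathrm{pr}^*$. This requires checking that the splitting map is compatible with the collapse $\Sigma^kV_+\to\Sigma^kV$, which is in turn adjoint to the quotient $V_+\to V$. An alternative splitting could differ from this one by a map $\Sigma^kV\to S^k$ composed through the bottom cell, but any such ambiguity lies in $\{V_+,S^0\}$ restricted to $S^0$, and we have already fixed that restriction to be zero. We will check this carefully with the cofibre sequence $S^0\to V_+\to V$. The rest of the argument is formal.
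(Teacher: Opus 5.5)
Your proposal is correct, but it takes a different route from the paper.

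\textbf{The paper's route.} The paper first proves Lemma 3.7: for an arbitrary pointed $f\colon V_+\to(\Omega^NS^N)_0$ with $g=f|_V$, one has $\Sigma^Nf\simeq(\Sigma^Ng\vee 0)\circ\tau$. It establishes this by an explicit chase through the change-of-basepoint equivalences $\Sigma(V_+,+)\simeq SV_+\simeq\Sigma(V_+,v_0)\cong\Sigma(V,v_0)\vee S^1$, which requires well-pointedness of $\Omega^NS^N$, $Y_+$ and $SY_+$. It then assembles the large diagram (3.36) to conclude that the composite in (3.28) is restriction to the $\Sigma^kV$ summand.

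\textbf{Your route.} You identify $\tau^*(\{\beta\}\vee 0)$ directly as $\{\beta\}\circ\Sigma^kq$, where $q\colon(V_+,+)\to(V,v_0)$ is the based quotient. Then $\Sigma^kq$ is exactly the collapse of the bottom sphere $\Sigma^k(\{v_0\}_+)$, and the identification follows from the cofibre sequence $S^k\to\Sigma^kV_+\to\Sigma^kV$. Naturality of the loop--suspension adjunction in the source variable then gives $\mathrm{adj}(\{\beta\}\circ\Sigma^kq)=\mathrm{adj}\{\beta\}\circ q$. Its restriction to $V$ is literally the pointed map $\mathrm{adj}\{\beta\}$, which lands in $(\Omega^kS^k)_0$. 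This is also what makes the dotted arrow legitimate.

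\textbf{The splitting ambiguity.} Your handling of this is right, and it can be stated more crisply. If $s'=s+\iota\circ\gamma$ with $\gamma\in[\Sigma^kV,S^k]$ and $\iota$ the bottom-cell inclusion, then $f\circ s'=f\circ s$ because $f\circ\iota\simeq 0$. There is no ambiguity in the $S^k$ summand, since $[S^k,\Sigma^kV]=0$. So $\tau^*$ on elements of the form $\{\beta\}\vee 0$ is determined by two homotopy-theoretic properties of $\tau$: the $S^k$ summand is the bottom cell, and the $\Sigma^kV$ summand is a section of the collapse. The paper's basepoint-change model of $\tau$ does satisfy both.

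\textbf{Comparison.} Your argument is shorter and avoids all the point-set basepoint bookkeeping. It also recovers Lemma 3.7 itself. After a homotopy rel $+$ moving $f(v_0)$ to the basepoint (possible because $(\Omega^NS^N)_0$ is path-connected), one has $f=g\circ q$ on the nose, hence $\Sigma^Nf=\Sigma^Ng\circ\Sigma^Nq=\tau^*(\Sigma^Ng\vee 0)$. What the paper's approach buys is only an explicit point-set model pinning down which splitting $\tau$ is meant. Your argument shows that this level of detail is unnecessary for the statement.
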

Note that, using structure of the map $\tau$ the elements of $[\Sigma^k V \vee S^k,S^k]$ with second summand zero exactly corresponds to the maps in $[V_+, \Omega^k S^k]_*$ such that, it's restriction in $V$ falls in $[V, (\Omega^k S^k)_0]_*$. So, we only need to prove, $\{\beta\} \vee 0 $ goes to $\{\beta\}$.

\begin{lemma}\label{lem:3.7}
The following diagram commutes:
\[
\begin{tikzcd}[scale=0.82, transform shape, row sep=2.3em, column sep=1.5em]
{[V_+, (\Omega^N S^N)_0]_*} \arrow[r, "i_*"] \arrow[d, "\cong"]
& {[V_+, \Omega^N S^N]_*} \arrow[r, "\Sigma^N"] 
& {[\Sigma^N V_+, \Sigma^N \Omega^N S^N]_*} \arrow[r, "\epsilon_*"] \arrow[d, "\equiv", "\tau^*"']
& {[\Sigma^N V_+, S^N]} \arrow[d, "\equiv", "\tau^*"{left}] \\
{[V \vee S^0, (\Omega^N S^N)_0]_*} \arrow[r, "j_*"]
& {[V \vee S^0, \Omega^N S^N]_*} \arrow[r, "\Sigma^N"]
& {[\Sigma^N V \vee S^N, \Sigma^N \Omega^N S^N]_*} \arrow[r, "\epsilon_*"]
& {[\Sigma^N V \vee S^N, S^N]}
%
\arrow[from=1-2, to=1-4, bend left=20, "\text{adj}", "\cong"{below}]
\arrow[from=2-2, to=2-4, bend right=20, "\text{adj}", "\cong"'{below}]
\end{tikzcd}
\tag{3.29}\label{eq:3.29}
\]
\end{lemma}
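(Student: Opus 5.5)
The diagram (3.29) splits into three pieces, and I would check each separately: the left square, the middle square, and the two curved adjunction arrows. The vertical isomorphism on the left is induced by the based homotopy equivalence $V\vee S^0\simeq V_+$. This equivalence is the identity on $V$, and it sends the non-base point of $S^0$ to the extra point $+$, or to any point of $V$; these choices are based-homotopic because $V$ is connected. The vertical maps $\tau^*$ in the middle and on the right are induced by the suspension of this same equivalence, $\Sigma^N\tau\colon\Sigma^N V\vee S^N\simeq\Sigma^N V_+$. This uses that $\Sigma^N$ commutes with wedges and that $\Sigma^N(S^0)=S^N$.

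\emph{Left square.} Both horizontal maps $i_*$ and $j_*$ are postcomposition with the inclusion $(\Omega^NS^N)_0\hookrightarrow\Omega^NS^N$. The vertical maps are precomposition with $\tau$. Precomposition and postcomposition commute, so the square commutes on the nose.

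\emph{Middle square.} The maps are $f\mapsto\Sigma^N f$, followed by postcomposition with the counit $\epsilon\colon\Sigma^N\Omega^NS^N\to S^N$. Naturality of $\Sigma^N$ gives $\Sigma^N(f\circ\tau)=\Sigma^Nf\circ\Sigma^N\tau$. Postcomposition with $\epsilon$ again commutes with precomposition by $\Sigma^N\tau$. This square therefore commutes as well.

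\emph{Curved arrows.} The composites $\epsilon_*\circ\Sigma^N$ in each row are exactly the standard formula for the adjunction isomorphism $[A,\Omega^NB]_*\cong[\Sigma^NA,B]_*$, namely $f\mapsto\epsilon\circ\Sigma^Nf$. Hence the curved arrows agree with the straight composites, and both are bijections. Naturality of the adjunction in the source variable gives compatibility with $\tau$.

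The only genuine point is the identification of the vertical maps, and in particular making sure that the right-hand $\tau^*$ really is induced by the suspension of the left-hand equivalence. I would write $\tau$ explicitly as a map $V\vee S^0\to V_+$ and define both vertical maps from it. Once this is fixed, everything reduces to naturality of the suspension--loop adjunction and to the fact that pre- and postcomposition commute. There are no stable-range issues, since all maps are unstable based maps.
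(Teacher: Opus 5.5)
Your argument depends on a based homotopy equivalence $V\vee S^0\to V_+$ that is the identity on $V$, and no such map exists. In (3.29), $V_+$ is based at the disjoint point $+$; this is what makes $[V_+,Y]_*$ equal to the free classes $[V,Y]$. By contrast, $V\vee S^0$ is based at $v_0\in V$. As unbased spaces both are $V\sqcup\{\mathrm{pt}\}$, but the two basepoints lie in different path components.

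A based map $V\vee S^0\to V_+$ must send $v_0$, and hence all of the connected space $V$, to the isolated point $+$. Precomposing with any such map sends every class of $[V_+,(\Omega^NS^N)_0]_*$ to the trivial class, so it cannot be the left vertical bijection. For the same reason, $+$ cannot be moved into $V$ by a homotopy, contrary to your remark.

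The left bijection is instead the identification $[V_+,Y]_*=[V,Y]\cong[V,Y]_*\cong[V\vee S^0,Y]_*$, which holds because $Y=(\Omega^NS^N)_0$ is a path-connected H-space. Concretely, you homotope $f$ so that $f(v_0)$ is also the basepoint, and then read $f$ as $g\vee 0$ with $g=f|_V$.

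Likewise, $\tau$ between $\Sigma^N(V_+,+)$ and $\Sigma^N(V,v_0)\vee S^N=\Sigma^N(V_+,v_0)$ is a change-of-basepoint equivalence that exists only after suspending; it is not $\Sigma^N$ of any based map. So the identity $\Sigma^N(f\circ\tau)=\Sigma^Nf\circ\Sigma^N\tau$ and ``naturality of the adjunction in the source variable'' have nothing to apply to. The two rows are adjunction bijections for two different based spaces, $(V_+,+)$ and $(V_+,v_0)$, and no based map relates them. Note also that (3.29) has no vertical arrow in the second column. The square that needs proof runs from the first column to the third; the $\epsilon_*$ square is the formal one.

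What you actually have to prove is (3.30): for $f$ with $f(+)=f(v_0)=*$ and $g=f|_V$, $\Sigma^Nf\simeq(\Sigma^Ng\vee 0)\circ\tau$. The paper does this in three steps.
\begin{enumerate}
\item It factors $f=q\circ g_+$ through $Y_+$.
\item It uses naturality of the basepoint-change equivalence $\eta_X\colon\Sigma(X_+,x)\to\Sigma(X_+,+)$ with respect to based maps.
\item It checks $\Sigma q\circ\eta_Y\simeq\Sigma q$, where the collapse $q\colon Y_+\to Y$ is based for both $+$ and $y$. This is done by passing through the unreduced suspension $SY_+$ and using well-pointedness.
\end{enumerate}

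A naturality argument close to yours can be made to work, but only with the honest based collapse $\pi_V\colon(V_+,+)\to(V,v_0)$, $+\mapsto v_0$. Since $f(+)=f(v_0)$, one has $f=g\circ\pi_V$ exactly. Let $r\colon V_+\to S^0$ collapse $V$ to the non-base point, and take $\tau$ to be the co-H sum of $\Sigma^N\pi_V$ and $\Sigma^Nr$, which is an equivalence by a homology check and Whitehead's theorem. Then $(\Sigma^Ng\vee 0)\circ\tau=\Sigma^N(g\circ\pi_V)+0=\Sigma^Nf$. You would still need to identify this $\tau$ with the paper's.
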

\begin{proof}
We know the Loop-Suspension adjunction isomorphism is given by the unit-counit map~\cite{may}, so the adjoint isomorphism commutes with lower compositions. The second square commutes automatically. we wish to show the first square commutes. Starting with $f:V_+ \to (\Omega^nS^n)_0$ on the top-left, which maps down to $f:V \vee S^0 \to (\Omega^nS^n)_0$, we write $f=g \vee \{0\}$. This goes to $\Sigma^n V \vee S^N \xrightarrow{\Sigma^n g \vee 0} \Sigma^n\Omega^nS^n$, while in the top row the map goes to $\Sigma^n f$. It suffices to show the following diagram commutes:
\begin{equation}\label{eq:3.30}\tag{3.30}
\begin{tikzcd}
\Sigma^n (V_+,+) \arrow[r," \Sigma^n f "] \arrow[d,"\tau","\simeq"{left}] & \Sigma^n (\Omega^n S^n) \\
\Sigma^n(V,v) \vee S^n \arrow{ur}[swap]{\Sigma^n g \vee \{0\}}
\end{tikzcd}
\end{equation}
Let $(V, v_0)$, $(V_+, +)$, $(V_+, v_0)$, $(Y, y)$ be based spaces. $f: V_+ \to Y$ be a map and $f|_V = g:(V,v_0) \to (Y,y). f(\{v_0,+\}=\{y\}). \text{Then} f=q \circ g_+: (V_+,+) \xrightarrow{g_+} (Y_+,+) \xrightarrow{q} (Y,y)$. Notice, $q: (Y_+, y) \to (Y, y)$ also.

We have the following diagram (which need not commute a priori):
\begin{equation}\label{eq:3.31}\tag{3.31}
\begin{tikzcd}
\Sigma(V_+, +) \arrow[r,"\Sigma g_+"] & \Sigma(Y_+, +) \arrow[r,"\Sigma q"] & \Sigma(Y, y) \\
\Sigma(V_+, v_0) \arrow[u,"\eta_V","\simeq"{right}] \arrow[r,"\Sigma g_+"] & \Sigma(Y_+, y) \arrow[u,"\eta_Y","\simeq"{right}] \arrow[ur,"\Sigma q",bend right=15] \\
\Sigma(V, v_0) \vee S^1 \arrow[uu,bend left = 80,"\tau","\simeq"{right}]\arrow[u,"\cong"] \arrow[r,"\Sigma g \vee Id"] & \Sigma(Y, y) \vee S^1 \arrow[u,"\cong"] \arrow[uur,bend right=30,"q|_{S^1}"]
\end{tikzcd}
\end{equation}
(In~\eqref{eq:3.31}: the upper and middle composition maps both equal $\Sigma f$; the lower composition map is $\Sigma g \vee \{0\}$.)

\paragraph*{Step~1.} \textit{The top-left square, bottom square, and bottom triangle commute.}

Pf: $(X, x) \xrightarrow{F_1, F_2, F_3} \Sigma(X_+, x), \Sigma X_+, \Sigma(X_+, +) \text{ are functorial}$. The homotopy equivalence between $\Sigma(X_+, +)$ and $\Sigma(X_+, x)$ is a natural transformation, so the top-left square commutes. The bottom square and triangle commute trivially.

\paragraph*{Step~2.} \textit{The upper triangle commutes.}

The following commutes from definition.
\begin{equation}\label{eq:3.32}\tag{3.32}
\begin{tikzcd}[row sep=large, column sep=large]
\Sigma(Y_+,+)  \arrow[d, "\Sigma q"] 
& SY_+ \arrow[l, "\simeq"{above},"{C|_{\{+\}\times I}}"] \arrow[r, "\simeq"{below},"{C|_{\{y\}\times I}}"] \arrow[d, "Sq"] 
& \Sigma(Y_+,y) \arrow[d, "\Sigma q"] \\
\Sigma(Y,y)  
& SY \arrow[l, "\simeq"{above},"C'|_{\{y\}\times I}"] \arrow[r, "\simeq"{below},"C'|_{\{y\}\times I}"] 
& \Sigma(Y,y)
\end{tikzcd}
\end{equation}
All maps are pointed with respect to the new basepoints. Choose vertex $y_1 = [(y_0,1)]$ as the basepoint of $SY$ and $SY_+$. Since $C|_{\{y\}\times I}$ is a pointed homotopy equivalence (assuming $SY_+, SY$ are well-pointed), we obtain a pointed homotopy inverse $\tilde{c}$. Similarly $C'|_{\{y\}\times I}$ is a pointed homotopy equivalence, giving inverse $\tilde{\tilde{c}}$. This yields the following pointed homotopy commutative diagram:

\medskip
\begin{center}
\begin{tabular}{@{}c@{}c@{}c@{}}
\begin{tikzcd}[row sep=2em, column sep=3.2em]
\Sigma(Y_+,+)  \arrow[d, "\Sigma q"] 
& SY_+ \arrow[l, "\simeq"{above},"{C|_{\{+\}\times I}}"]  \arrow[d, "Sq"] 
& \Sigma(Y_+,y) \ : \ \eta_{_Y} \arrow[d, "\Sigma q"] \arrow[l,"\tilde{c}","\simeq"{above}] \\
\Sigma(Y,y)  
& SY \arrow[l, "\simeq"{above},"C'|_{\{y\}\times I}"]  
& \Sigma(Y,y) \arrow[l,"\tilde{\tilde{c}}","\simeq"{above}] \arrow[ll,"Id",bend left = 35]
\end{tikzcd}
& \quad$\Rightarrow$\quad &
\begin{tikzcd}[row sep=2em, column sep=2em]
\Sigma(Y_+,+) \arrow[d, "\eta_Y", "\simeq"{left}] \arrow[r, "\Sigma q"] & \Sigma(Y,y) \\
\Sigma(Y_+,y) \arrow[ur, "\Sigma q"]
\end{tikzcd}
\\[4pt]
$(3.33)$ & & $(3.34)$
\end{tabular}
\end{center}
\medskip
Since $Y = \Omega^N S^N$ is a countable CW complex, $(Y,y)$ is well-pointed, as are $(Y_+,+)$ and $(Y_+,y)$. Since $Y_+$ is a CW complex, $SY_+$ is well-pointed. Hence the upper triangle commutes.

Together these show that diagram~\eqref{eq:3.31} commutes, from which the following pair of commutative diagrams establishes that the left square in diagram~\eqref{eq:3.29} also commutes (see~\eqref{eq:3.30}) :
\medskip
\begin{center}
\begin{tabular}{@{}c@{}c@{}c@{}}
\begin{tikzcd}[row sep=3.2em, column sep=3.2em]
    \Sigma(V_+,+) \arrow[d,"\tau","\simeq"{left}] \arrow[r,"\Sigma f"] & \Sigma(Y,y)\\
    \Sigma (V,v) \vee S^1 \arrow[ru,"\Sigma g \vee \{0\}"]
\end{tikzcd}
& \quad$\Rightarrow$\quad &
\begin{tikzcd}[row sep=3.2em, column sep=3.2em]
    \Sigma^N(V_+,+) \arrow[d,"\tau","\simeq"{left}] \arrow[r,"\Sigma^N f"] & \Sigma^N(Y,y)\\
    \Sigma^N (V,v) \vee S^N \arrow[ru,"\Sigma^N g \vee \{0\}"]
\end{tikzcd}
\\[1pt]
\multicolumn{3}{r}{$(3.35)$}
\end{tabular}
\end{center}
\end{proof}
\vspace{-3em}

\begin{proof}[Proof of Theorem~\ref{thm:3.6}]
We have the following commutative diagram.
\[
\begin{tikzcd}[scale=0.6, transform shape]
        {[\Sigma^N V,S^n]^0_*} & {[V,(\Omega^N S^N)_0]_*} \arrow[l,"\text{\Large adj}","\text{\Large $ \cong $}"{above}] \arrow[d,"\text{\Large $ Id $}","\text{\Large $ \cong $}"{left}] & {[V,(\Omega^N S^N)_0]} \arrow[l,"\text{\Large $\cong$}"] & {[V_+,(\Omega^N S^N)_0]_*} \arrow[l,"\text{\Large$\text{\Large $ \cong $}$}"] \arrow[d,"\text{\Large $ \cong $}","\text{\Large $ Id $}"{left}] \arrow[r,"\text{\Large $ adj $}","\text{\Large $\cong$}"{below}] & {[\Sigma^N V_+,S^N]_*^0} \arrow[d,"\text{\Large $ \tau^* $}*","\text{\Large $ \equiv $}"{left}]\\
    &{[V,\Omega^N S^N)_0]_*} \arrow[lu,"\text{\Large $ adj $}","\text{\Large $\cong$}"{right}, bend left = 25] && {[V \vee S^0, (\Omega^N S^N)_0]_*} \arrow[ll,"\text{\Large $ i^* $}","\text{\Large $\cong$}"{above}] \arrow[r,"\text{\Large $adj$}","\text{\Large $\cong$}"{below}] & {[\Sigma^n V \vee S^N, S^N]_0^*}
\end{tikzcd} \tag{3.36}
\] 
The rightmost square commutes by Lemma~\ref{lem:3.7}. The leftmost square commutes trivially. The middle square commutes too: given $f: V_+ \to (\Omega^NS^N)_0$, we may assume $f|_V = g$ is also pointed (since $(\Omega^NS^N)_0$ is a path-connected $H$-space and $V$ is well-pointed). On the top row $[f]$ maps to $[g]$; the same holds on the bottom row. The lower sequence starting from the rightmost term is restriction, and the map in the theorem is the composition from bottom-right to top-left via the top row, which by commutativity is simply restriction. Hence the theorem is proved.
\end{proof}

\begin{remark}\label{rem:3.8}
Diagram~\eqref{eq:3.27} and Theorem~\ref{thm:3.6} give $\operatorname{adj}\hat{\eta}^t(\beta \circ j,\bar{\beta}) = ((\tau^{-1})^* \tilde{N}^t(\beta))|_{\Sigma^k V}$.
\end{remark}

We now compute the normal invariants of certain maps that, together with Theorem~\ref{thm:3.5}, account for all of $[S^{2n-1} \cup_2 e^{2n},SG]$.

\begin{theorem}\label{thm:3.9}
\begin{enumerate}[label=\textup{\arabic*)}]
\item Let $S^{4n-1} \xrightarrow{\beta} S^{2n-1} \cup_2 e^{2n}$ be a map with $q^*\{\beta\} = \hat{\beta}$. Then $i^*j^*\operatorname{adj}\hat{\eta}^t(\pm \beta \circ j,\overline{\pm \beta}) = D(\hat{\beta})$.
\item For any $S^{4n-1} \xrightarrow{\alpha} S^{2n-1}$, $q^*\{\alpha\} = i^*\operatorname{adj}\tilde{\eta}^t (P_{\pm i^*j^*\alpha}, \mathrm{Id}_{\nu_V} \mathbin{\#} b_l \circ b_\alpha)$.
\end{enumerate}

For any path-connected space $X$ there is a bijection $H\colon\{X,S^0\}_* \xrightarrow[\cong]{\operatorname{adj}} [X,(\Omega^N S^N)_0]_* \xrightarrow[\equiv]{[1]*} [X,SG]_*$. For $x \in \{X,S^0\}_*$ denote by $x_H$ the corresponding element in $[X,SG]_*$. Furthermore:
\begin{enumerate}[label=\textup{\arabic*)}]\setcounter{enumi}{2}
\item For any $S^{4n-1} \xrightarrow{\alpha} S^{2n-1}$, $q^*(\alpha_H) = j^*\eta^t(P_{\pm i^*j^*\alpha}, \mathrm{Id}_{\nu_V} \mathbin{\#} b_l \circ b_\alpha)$.
\item Let $S^{4n-1} \xrightarrow{\beta} S^{2n-1} \cup_2 e^{2n}$ be a map with $q^*\{\beta\} = \hat{\beta}$. Then $(i^*j^*)\eta^t([P_{\pm j^*\beta}, \mathrm{Id}_{\nu_V} \mathbin{\#} \bar{\beta}]) = D(\hat{\beta})_H$.
\end{enumerate}
\end{theorem}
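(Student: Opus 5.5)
The plan is to prove the four parts in the order 2), 1), then 3) and 4). Parts 3) and 4) are formal consequences of 2) and 1) once the bijection $H$ is in place.

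\textbf{Part 2).} This is essentially Theorem~\ref{thm:3.5} transported along the inclusion $j\colon S^{2n-1}\cup_2 e^{2n}\to V$. The pinch map $P_{\pm i^*j^*\alpha}$ is the pinch map built from $l\circ\alpha$, where $l\colon S^{2n-1}\to V$ is the fibre inclusion. Because $l\simeq j\circ i$ on the CW model, [Lemma~4.1]{nomura} identifies the two pinch maps up to the homotopy equivalence $V\simeq V_{\mathrm{cw}}$. Diagram~\eqref{eq:3.22} then computes $i^*\operatorname{adj}\tilde\eta^t$ as the composite $q$ followed by $\{\alpha\}$ followed by $\tilde c$. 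The argument under~\eqref{eq:3.22} fixes the lower-left map: $[S^{2n-1+k}\cup_2 e^{2n+k},S^{2n+k}]\cong\mathbb{Z}_2\{q\}$, and $\Sigma^k l^!$ is nonzero in mod-$2$ cohomology. The same homology argument as in Lemma~\ref{lem:3.1} gives $\tilde c|_{S^k}=\pm\mathrm{Id}$. Together these give $q^*\{\alpha\}$ up to sign, and the sign is absorbed by the choice of $\pm$ in the statement.

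\textbf{Part 1).} Here $S^{2n-1}\cup_2 e^{2n}$ is not a submanifold, so Lemma~\ref{lem:SW-dual-factoring} is not available. Instead I would use diagrams~\eqref{eq:3.25}--\eqref{eq:3.26} together with Remark~\ref{rem:3.8}, which gives $\operatorname{adj}\hat\eta^t(\beta\circ j,\bar\beta)=((\tau^{-1})^*\tilde N^t(\beta))|_{\Sigma^kV}$. The steps are as follows.

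First, decompose $\Sigma^kV_+\simeq(S^{2n-1+k}\cup_2e^{2n+k})\vee S^{4n-1+k}\vee S^k$ via $\tau_+$. Under this decomposition $\Sigma^k\beta_+$ becomes $\Sigma^k\beta\vee\mathrm{Id}$. The map $\Sigma^k\beta$ lands in the Moore-space summand, possibly with some component into $S^{4n-1+k}$; that component has degree zero because $\beta$ factors through the lower skeleton.

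Second, Spanier--Whitehead dualise. By [Thm~6.1, 6.5, Cor~6.7]{funcspaces}, the dual of a wedge is the wedge of the duals. The Moore space $S^{2n-1}\cup_2e^{2n}$ is self-dual up to a shift, so $D\beta$ restricted to the Moore-space summand is the dual stable map $D(\{\beta\})$.

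Third, compose with $C_{s_0}$ and restrict along $i^*j^*$ to the bottom cell $S^{2n-1+k}$. This restriction picks out the component obtained by precomposing with the inclusion of the bottom cell, which under duality corresponds to the top-cell quotient $q$. The result is $D(q^*\{\beta\})=D(\hat\beta)$, using the dual commutative diagram~\eqref{eq:3.19}.

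As in part 2), the identifications $D(\tau_+\circ\xi)$ and $C_{s_0}$ are pinned down by checking them on homology, using the homology--cohomology interchange [I.4]{browder}.

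\textbf{Parts 3) and 4).} These follow by applying the bijection $H$. By Lemma~\ref{lem:eta-pinch} and~\eqref{eq:3.27}, $\eta^t=[1]*\hat\eta^t$, and $H$ is natural with respect to $i^*$, $j^*$ and $q^*$, since both $\operatorname{adj}$ and $[1]*$ commute with precomposition. Parts 3) and 4) are therefore the images of parts 2) and 1) under $H$.

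\textbf{Main obstacle.} The hardest step is the middle step of part 1): justifying that, after dualising, the Moore-space component of $D\beta$ is exactly $D\{\beta\}$ and that no cross-term through $S^{4n-1+k}$ or $S^k$ survives the restriction $i^*j^*$. I would first try to show that the cross-terms have degree zero, so that they vanish after composing with $C_{s_0}$ restricted to the $S^k$ summand. Failing that, I would fall back on the mod-$2$ cohomology check used in Theorem~\ref{thm:3.5}.
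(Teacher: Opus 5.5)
Your proposal is correct and follows essentially the paper's proof: part 2) is Theorem~\ref{thm:3.5}, part 1) is Remark~\ref{rem:3.8} combined with the dual splitting in \eqref{eq:3.37}, restriction to the bottom cell, and the commutativity of \eqref{eq:3.38} giving $i^*D\{\beta\}=D(\hat\beta)$, and parts 3), 4) are the formal translation under $H$. The only difference is that the paper identifies the induced self-equivalence of $S^{2n-1+k}\cup_2 e^{2n+k}$ as $\mathrm{Id}$ or $3\,\mathrm{Id}$ (citing Li--Zhu) and uses $2\,\mathrm{Id}\circ i=0$, whereas your homology check already suffices because only its restriction to the bottom cell, an element of $\pi^s_{2n-1+k}(S^{2n-1+k}\cup_2 e^{2n+k})\cong\mathbb{Z}_2$, matters; for the $S^k$ cross-term your first idea is the right one (that summand is dual to the top cell and is killed because $\beta$ has degree zero there), whereas your mod-$2$ cohomology fallback could not detect such a term.
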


\begin{proof}[Proof of (1)]
\phantom\qedhere
We have the following commutative diagram: 
\begin{equation}\label{eq:3.37}\tag{3.37}
\begin{tikzcd}[scale=0.85, transform shape, row sep=1.8em, column sep=1.8em]
&&(S^{2n-1+k} \cup_2 e^{2n+k}) \vee S^{4n-1+k} \vee S^k 
    \arrow[r, "D\{ \beta \} \vee \mathrm{Id}"] 
    \arrow[d, "\simeq"', "D(\bar{\tau}_+ \circ \bar{\xi})"] 
& S^{4n-1+k} \vee S^k 
    \arrow[d, "\simeq", "D(\tau_+ \circ \xi)"'] 
    \arrow[dr, bend left = 15,"\tilde{c}"{above}] \\
&&\Sigma^k V_+ 
    \arrow[r,"D(T(\bar{\beta})"] \arrow[d,"\tau","\simeq"{left}] 
& \Sigma^k S^{4n-1}_+ 
    \arrow[r,"C_{s_0}"] 
& S^k \\
&&{\Sigma^k V \vee S^k} \arrow[rru,"\tilde{\beta}", bend right = 10] \arrow[d,"\simeq"] \\
{S^{2n-1+k}} \arrow[r,"i"] & {S^{2n-1+k} \cup_2 e^{2n+k}} \arrow[r,"j"] \arrow[ruuu,"G",bend left = 30] & {\Sigma^k V_{cw} \vee S^k} \arrow[uuu,"L"{right}, "\simeq"{left}, bend left = 65] \arrow[rruu,bend right = 20,"\hat{\beta}"]
\end{tikzcd}
\end{equation}
$L$ induces a homotopy equivalence $S^{2n-1+k} \cup_2 e^{2n+k} \vee S^k \xrightarrow{L'} S^{2n-1+k} \cup_2 e^{2n+k} \vee S^k$, so $G = L \circ j$ is of the form $S^{2n-1+k} \cup_2 e^{2n+k} \xrightarrow{f \vee h} S^{2n-1+k} \cup_2 e^{2n+k} \vee S^k$, where $f$ is a homotopy equivalence (by analysing $H_{2n-1+k}(L')$) and $h$ is some map into $S^k$. Thus $f$ is either $\mathrm{Id}$ or $3\,\mathrm{Id}$~\cite[Lemma~2.2]{homotopy5mfld}. Moreover, $\tilde{c} \circ (D(\{\beta\}) \vee \mathrm{Id}) \circ G \circ i = \pm\, i^*D\{\beta\}$ (using $2\,\mathrm{Id} \circ i = 0$). By Remark~\ref{rem:3.8}, $(i^*j^*\operatorname{adj}\hat{\eta}^t(\beta)) = \hat{\beta} \circ j \circ i = \pm\, i^*D\{\beta\} = D(\hat{\beta})$. Hence we have proved (1).

To make this precise, we organise the relevant maps in the following commutative diagram.
\begin{equation}\label{eq:3.38}\tag{3.38}
\begin{tikzcd}[scale=0.82, transform shape, row sep=2em, column sep=large]
{[S^{2n},SG]} \arrow[r,"q^*"] & {[S^{2n-1} \cup_{2} e^{2n}, SG]} \arrow[r,"i^*"] & {[S^{2n-1},SG]}\\
\{S^{2n}, S^0\}
  \arrow[r, "q^*"] \arrow[u,"H","\equiv"{right}]
&
\{S^{2n-1} \cup_{2} e^{2n}, S^0\} \arrow[u,"H"]
  \arrow[r, "i^*"]
&
\{S^{2n-1}, S^0\} \arrow[u,"H","\equiv"{right}]
\\
\pi_{4n-1}^s(S^{2n-1})
  \arrow[r, "i_*"]
  \arrow[u, "\cong\, D"']
&
\pi_{4n-1}^s(S^{2n-1} \cup_{2} e^{2n})
  \arrow[r, "q_*"]
  \arrow[u, "\cong\, D"']
&
\pi_{4n-1}^s(S^{2n})
  \arrow[u, "\cong\, D"']
\\
&
\pi_{4n-1}(S^{2n-1} \cup_{2} e^{2n})
  \arrow[u, "\Sigma^K", "onto"']
&
\end{tikzcd}
\end{equation}
In the notation of the commutative diagram~\eqref{eq:3.38} above, this proves parts (1). And, (2) is the exact statement of \textbf{Theorem 3.5}.
\end{proof}

\begin{proof}[Proof of (3)]
$q^*(\alpha_H) = H(q^*\alpha) = [1]*\operatorname{adj}\,q^*\{\alpha\} = [1]*i^*\tilde{\eta}^t(\pm i_*\alpha, b_l \circ b_\alpha) = i^*([1]*\tilde{\eta}^t(\pm i_*\alpha, b_l \circ b_\alpha)) = i^*\eta^t(P_{\pm i^*\alpha}, \mathrm{Id}_{\nu_V} \mathbin{\#} b_l \circ b_\alpha)$. \phantom\qedhere
\end{proof}

\begin{proof}[Proof of (4)]
$(i^*j^*)\eta^t([P_{\pm j^*\beta}, \mathrm{Id}_{\nu_V} \mathbin{\#} \bar{\beta}]) = (i^*j^*)([1]*\hat{\eta}^t([\pm j^*\beta, \bar{\beta}])) = [1]*(i^*j^*\hat{\eta}^t([\pm j^*\beta, \bar{\beta}])) = [1]*\operatorname{adj}\,D(\hat{\beta}) = D(\hat{\beta})_H$.
\end{proof}

We now show that the normal invariants computed above jointly generate all of $[S^{2n-1} \cup_2 e^{2n}, SG]$.

\begin{theorem}\label{thm:3.10}
The map $\epsilon^t(V) \xrightarrow{\eta^t} [V,SG] \xrightarrow{i^*} [S^{2n-1} \cup_2 e^{2n},SG]$ is surjective. Passing to $G/O$, the map $\epsilon(V) \xrightarrow{\eta} [V,SG] \xrightarrow{i^*} [S^{2n-1} \cup_2 e^{2n},G/O]$ is surjective onto $\mathrm{Im}(\tau_*)$, where $\tau\colon G \to G/O$.
\end{theorem}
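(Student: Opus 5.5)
The plan is to use the short exact sequence coming from the cofibration $S^{2n-1}\xrightarrow{i} S^{2n-1}\cup_2 e^{2n}\xrightarrow{q} S^{2n}$. After applying $[-,SG]$ (equivalently stable cohomotopy $\{-,S^0\}$ through the bijection $H$ of Theorem~\ref{thm:3.9}), this gives the exact row
\[
[S^{2n},SG]\xrightarrow{q^*}[S^{2n-1}\cup_2 e^{2n},SG]\xrightarrow{i^*}[S^{2n-1},SG],
\]
which is the top row of diagram~\eqref{eq:3.38}. Every $z\in[S^{2n-1}\cup_2 e^{2n},SG]$ would be handled in two pieces. First I realise its image $i^*z$ by some tangential normal invariant. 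Then I correct the remaining difference, which lies in $\operatorname{Im}(q^*)$.

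\textbf{Step 1 (lifting $i^*z$).} Through the Spanier--Whitehead isomorphisms $D$ in~\eqref{eq:3.38}, $i^*z$ corresponds to an element of $\pi^s_{4n-1}(S^{2n})$, and by exactness of the middle row it lies in the image of $q_*$. Since $\Sigma^K$ is onto, I pick an unstable $\beta\colon S^{4n-1}\to S^{2n-1}\cup_2 e^{2n}$ whose stable class maps to it. Part~(4) of Theorem~\ref{thm:3.9} then gives $i^*j^*\eta^t(P_{\pm j^*\beta},\mathrm{Id}\#\bar\beta)=D(\hat\beta)_H$, with $\hat\beta=q^*\{\beta\}$. Before using this, I have to check that the identifications in~\eqref{eq:3.38} carry $D(\hat\beta)$ to $i^*z$ up to sign. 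I also have to check that $P_{j\circ\beta}$ is a genuine tangential homotopy equivalence; this holds because $V$ is stably parallelizable and $\beta$ factors through the lower skeleton, as in \S2.4. Set $z_1=j^*\eta^t(P_{\pm j^*\beta})$. Then $z-z_1\in\ker i^*=\operatorname{Im}q^*$.

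\textbf{Step 2 (correcting by pinch maps on the bottom sphere).} Write $z-z_1=q^*(\alpha_H)$ with $\alpha\in\pi_{4n-1}(S^{2n-1})$, which is possible because $[S^{2n},SG]\cong\pi^s_{2n}$ is realised by unstable maps in this range. Part~(3) of Theorem~\ref{thm:3.9} (equivalently Theorem~\ref{thm:3.5}) gives $q^*(\alpha_H)=j^*\eta^t(P_{\pm i_*\alpha})$. I then compose the two pinch maps and apply Madsen's composition formula as in the proof of Theorem~\ref{thm:3.3}:
\[
\eta^t(P_2\circ P_1)=\eta^t(P_2)+(P_2^{-1})^*\eta^t(P_1).
\]
Since $P_2^*=\mathrm{Id}+c^*(\cdots)$ and $j^*c^*=0$ (the map $c$ collapses onto the top cell and the skeleton misses it), restricting with $j^*$ kills the cross term. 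This gives $j^*\eta^t(P_2\circ P_1)=z_1+(z-z_1)=z$, after absorbing the signs by replacing $\alpha,\beta$ with $-\alpha,-\beta$. This proves surjectivity of $\epsilon^t(V)\to[S^{2n-1}\cup_2e^{2n},SG]$.

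\textbf{Step 3 (passing to $G/O$).} I use the commuting square $\tau_*\circ\eta^t=\eta\circ F$ from \S2.1, together with naturality of $\tau_*$ under $j^*$. Every element of $\operatorname{Im}(\tau_*)\subset[S^{2n-1}\cup_2e^{2n},G/O]$ is $\tau_*z$ for some $z$, so it equals $j^*\eta(F(P))$ for the composite pinch map $P$ constructed above.

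I expect the main obstacle to be Step~1. One has to make sure that the element $D(\hat\beta)$ produced by Theorem~\ref{thm:3.9}(1) really sweeps out all of $\operatorname{Im}(i^*)$, and not just a subgroup differing by the ambiguity $f=\mathrm{Id}$ or $3\,\mathrm{Id}$. The sign and factor ambiguities must act invertibly, and the $3\,\mathrm{Id}$ case must be harmless on $i^*$ because $2\,\mathrm{Id}\circ i=0$. The bookkeeping between $\{-,S^0\}$, $[-,(QS^0)_0]$ and $[-,SG]$ via $[1]*$ must also be additive on the relevant classes. I would first try to settle this by chasing exactness of both rows of~\eqref{eq:3.38}.
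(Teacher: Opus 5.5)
Your proposal follows essentially the same route as the paper's proof: split $[S^{2n-1}\cup_2 e^{2n},SG]$ along the exact row $q^*,i^*$, realise lifts of $\operatorname{Im}(i^*)$ by the $\beta$-pinch maps of Theorem~\ref{thm:3.9}(4) and $\operatorname{Im}(q^*)$ by the $\alpha$-pinch maps of Theorem~\ref{thm:3.9}(3), compose them, kill the cross term because pulling back along a pinch map is the identity after restricting to the skeleton, and pass to $G/O$ by naturality of $\tau_*$. One correction: representing every class of $\pi^s_{2n}$ by some $\alpha\in\pi_{4n-1}(S^{2n-1})$, and every class of $\pi^s_{4n-1}(S^{2n-1}\cup_2 e^{2n})$ by an unstable $\beta$, is not a Freudenthal stable-range fact, because the $2n$-stem on $S^{2n-1}$ lies outside that range; so this surjectivity is an input you are assuming, exactly as the paper does in Theorem~\ref{thm:3.5} and in the ``onto'' arrow of~\eqref{eq:3.19}, rather than something that comes ``in this range'' for free.
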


\begin{proof}
Let $\cdots \to A \xrightarrow{f} B \xrightarrow{g} C \to \cdots$ be an exact sequence of groups, and let $B_g$ be any set containing at least one preimage of every element of $Im(C)$ ($B_g$ is not necessarily a group). Then $B = f(A) + B_g$. Denote $[P_{\pm i^*j^*\alpha}, \mathrm{Id}_{\nu_V} \mathbin{\#} b_l \circ b_\alpha] = [\xi_\alpha, \bar{\xi}_\alpha]$ and $[P_{\pm j^*\beta}, \mathrm{Id}_{\nu_V} \mathbin{\#} \bar{\beta}] = [\xi_\beta, \bar{\xi}_\beta]$. For the exact sequence \[
\begin{tikzcd}
\pi_{4n-1}^s(S^{2n-1})
  \arrow[r, "i_*"]
  &
\pi_{4n-1}^s(S^{2n-1} \cup_{2} e^{2n})
  \arrow[r, "q_*"]
  &
\pi_{4n-1}^s(S^{2n}) \end{tikzcd}
\]
for every $\hat{\beta} \in Im(q^*)$ choose, some $\{\beta\} \in \pi_{4n-1}^s(S^{2n-1} \cup_{2} e^{2n}) \ \text{and} \ (B_g)_1 = \{\{\ \beta\} \ | \ \hat{\beta} \in  im(q^*)\}$. For the exact sequence, \[
\begin{tikzcd}
{[S^{2n},SG]} \arrow[r,"q^*"] & {[S^{2n-1} \cup_{2} e^{2n}, SG]} \arrow[r,"i^*"] & {[S^{2n-1},SG]}.
\end{tikzcd}
\] Set $(B_g)_2 = \{ j^*\eta^t([\xi_\beta, \bar{\xi}_\beta]) \mid \{\beta\} \in (B_g)_1 \}$ (Theorem~\ref{thm:3.9}). Both the image of $q^*$ and $(B_g)_1$ are realised by $j^*\eta^t$ (Theorem~\ref{thm:3.9}). It remains to show that their sum (which equals all of $[S^{2n-1} \cup_{2} e^{2n}, SG]$) is also realised by $j^*\eta^t$.

$j^*\eta^t([\xi_b,\bar{\xi}_b] \circ [\xi_a,\bar{\xi}_a]) = j^*\eta^t([\xi_b,\bar{\xi}_b]) + j^*(\xi_{-a})^{*}\eta^t([\xi_b,\bar{\xi}_b]) = j^*\eta^t([\xi_a,\bar{\xi}_a]) + j^*\eta^t([\xi_b,\bar{\xi}_b]) = q^*(\alpha_H) + j^*\eta^t([\xi_b,\bar{\xi}_b])$, since $j^*(\xi_{-a})^{*} = \mathrm{Id}$ when $\xi_a$ is a pinch map~\cite[Lemma~2.6]{madsen}. This generates all elements of $\mathrm{Im}(q^*) + (B_g)_2 = [S^{2n-1} \cup_2 e^{2n}, SG]$.

\end{proof}

We can now establish the analogue of Theorem~\ref{thm:3.4} for $V_{2n+1,2}$: every element of $\operatorname{Im}(\tau_*)$ admits an explicit inverse.

\begin{theorem}\label{thm:3.11}
Any element of $\mathrm{Im}\,\tau_* \subset [V,G/O]$ has an inverse of the form $\varphi^{(\Sigma, g)}\colon V \mathbin{\#} \Sigma \xrightarrow{h_{\Sigma}} V \xrightarrow{P(g)} V$, where $g$ factors through the lower skeleton.
\end{theorem}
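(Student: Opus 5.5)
The plan is to mirror the proof of Theorem~\ref{thm:3.4} (and the last step of Theorem~\ref{thm:3.3}), replacing the wedge $S^{2n}\vee S^{2n+1}$ by the Moore space $S^{2n-1}\cup_2 e^{2n}$. The argument splits $[V,SG]$ along the cofibre sequence of the top cell. Write $V=V_{2n+1,2}\simeq (S^{2n-1}\cup_2 e^{2n})\cup_\rho e^{4n-1}$, with $j$ the inclusion of the lower skeleton and $c\colon V\to S^{4n-1}$ the collapse. Since $\Sigma^2\rho\simeq 0$, the Puppe sequence gives, on stable (infinite loop space) targets, a short exact sequence
\[
0\to [S^{4n-1},SG]\xrightarrow{c^*}[V,SG]\xrightarrow{j^*}[S^{2n-1}\cup_2 e^{2n},SG]\to 0 .
\]
The analogous sequence holds for $G/O$, and $\tau_*$ is compatible with both.

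\textbf{Step 1: realise the lower-skeleton part.} Let $x=\tau_*(y)\in\operatorname{Im}\tau_*$ with $y\in[V,SG]$. By Theorem~\ref{thm:3.10}, $j^*y$ is realised as $j^*\eta^t$ of a composite of pinch maps $[\xi_\beta,\bar\xi_\beta]\circ[\xi_\alpha,\bar\xi_\alpha]$. Here $\xi_\alpha=P_{\pm i^*j^*\alpha}$ and $\xi_\beta=P_{\pm j^*\beta}$, and $\alpha,\beta$ factor through $S^{2n-1}\cup_2 e^{2n}$. A composite of pinch maps along maps into the lower skeleton is again a pinch map $P(g)$, with $g$ the sum of the two maps; the cross term is killed because $P^*$ acts as the identity on the lower skeleton (\cite[Lemma~2.6]{madsen}, as in Theorem~\ref{thm:3.10}). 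So there is a single $g\colon S^{4n-1}\to S^{2n-1}\cup_2 e^{2n}\to V$ with $j^*\eta^t(P(g))=j^*y$. Since $g$ factors through the lower skeleton and $V$ is stably parallelizable, $P(g)$ is a tangential homotopy equivalence.

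\textbf{Step 2: correct the top cell by an exotic sphere.} By exactness, $y-\eta^t(P(g))=c^*z$ for some $z\in[S^{4n-1},SG]$. Apply $\tau_*$ to get $x-\eta(P(g))=c^*\tau_*(z)$. Every element of $[S^{4n-1},G/O]$ coming from $\tau_*$ lies in the image of $\eta$ on $hS(S^{4n-1})=\Theta_{4n-1}$, since the surgery obstruction vanishes on tangential classes. So $c^*\tau_*z=\eta(h_\Sigma)$ for some $\Sigma\in\Theta_{4n-1}$, with $h_\Sigma\colon V\#\Sigma\to V$ the standard homeomorphism. Madsen's composition formula then gives
\[
\eta(P(g)\circ h_\Sigma)=\eta(P(g))+(P(g)^{-1})^*\eta(h_\Sigma)=\eta(P(g))+c^*\tau_*z=x .
\]
Here $(P(g)^{-1})^*$ fixes $\operatorname{Im}c^*$ because $P(g)^*$ is the identity modulo terms through $c^*g^*$, and $g^*c^*=0$ since $g$ misses the top cell. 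This exhibits $\varphi^{(\Sigma,g)}=P(g)\circ h_\Sigma$ as the required preimage (the ``inverse'' in the sense of Theorem~\ref{thm:3.4}). Replacing $g$ by $-g$ and $\Sigma$ by $-\Sigma$ handles the sign conventions if needed.

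\textbf{Main obstacle.} The delicate point is Step 1, the identification of a composite of the two pinch-map families with a single pinch map $P(g)$ whose normal invariant restricts correctly. The maps $\beta$ land in the Moore space, which is not a submanifold, so the factorization used in Lemma~\ref{lem:SW-dual-factoring} is unavailable. I would handle this through Remark~\ref{rem:3.8} and Theorem~\ref{thm:3.9}(4): work on $V_{cw}$ via the homotopy equivalence $V\simeq V_{cw}$ as in \cite[Lemma~4.1]{nomura}, and check that the sign ambiguities and the possible $3\,\mathrm{Id}$ self-map of the Moore space do not affect $j^*\eta^t$ modulo 2-torsion identifications. A secondary check is the short exactness of the sequence for $SG$, which needs the stable triviality of $\rho$ to split off the top cell.
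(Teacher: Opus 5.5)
Your proposal is correct and is essentially the paper's argument. You split off the top cell via the short exact sequence for $[V,SG]$, realise the lower-skeleton component by (a composite of) pinch maps using Theorem~\ref{thm:3.10}, and absorb the remaining $c^*$-term with a homotopy sphere through Madsen's composition formula; your explicit checks that $P(f)\circ P(g)\simeq P(f+g)$ and that $(P(g)^{-1})^*$ fixes $\operatorname{Im}c^*$ fill in points the paper leaves implicit.

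One small correction: $\tau_*z$ is realised by some $\Sigma\in\Theta_{4n-1}$ simply because $L_{4n-1}(\mathbb{Z})=0$. It is not because surgery obstructions vanish on tangential classes in general; they need not, for example for Kervaire-invariant-one elements in dimensions $2^i-2$.
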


\begin{proof}
  There is a short exact sequence $0 \to [S^{4n-1},SG] \xrightarrow{c^*} [V,SG] \xrightarrow{j^*} [S^{2n-1} \cup_2 e^{2n}, SG] \to 0$. For any $l \in [S^{2n-1} \cup_2 e^{2n}, SG]$, choose a pinch map $P_l$ with $i^* \eta^t(P_l) = l$. Consider also $0 \to [S^{4n-1},G/O] \xrightarrow{c^*} [V,G/O] \xrightarrow{j^*} [S^{2n-1} \cup_2 e^{2n}, G/O] \to 0$. Any $x \in [V,SG]$ has the form $c^*(a)+\eta^t(P_l)$, so $\xi^*(x) = c^*\xi^*(a) + \eta(P_l)$. Since $c^*\xi^*(a) \in [S^{4n-1},G/O]$, there exists a homotopy sphere $\Sigma_a$ with $\eta(\Sigma_a) = c^*\xi^*(a)$. Then $\eta(P_l \circ h_\Sigma) = c^*\xi^*(a) + \eta(P_l)$, so maps $[P_l \circ h_\Sigma] \in hS(V)$ realise all normal invariants  
\end{proof}

\begin{remark}
    (Theorem~\ref{thm:3.3}) and (Theorem~\ref{thm:3.11}) together gives an alternative proof of the following theorem, that is, any manifold tangentially homotopy equivalent to $V_{n,2}$ is almost diffeomorphic to $V_{n,2}$. 
\end{remark}
\section{Rigidity result for V}

From \S\,2.2 we use the following criterion: a homotopy equivalence $f\colon V \to V$ is homotopic to an almost diffeomorphism if and only if $\eta(f) \in c^*[S^m,G/O] \subseteq [V,G/O]$, where $c$ is the degree-one map $V \to S^m$. Equivalently, $j^*\eta(f) = 0$, where $j$ is the inclusion of the lower skeleton.

\subsection{Rigidity result for $V_{2n+1,2}$}

\begin{lemma}\label{lem:4.1}
For a pinch map $P_g$ with $g\colon S^{4n-1} \to S^{2n-1} \cup_2 e^{2n}$, $j^* \eta(P_g) = 0$ if and only if:
\begin{enumerate}[label=\textup{(\roman*)}]
\item $j^* \eta^t(P_g, \bar{P}_g) = 0$, when $2n \equiv 4, 6 \pmod{8}$.
\item $j^* \eta^t(P_g, \bar{P}_g) = q^*(J(\xi_{\epsilon}))$ or $0$, when $2n \equiv 0 \pmod{8}$; here $\langle \xi_{\epsilon_{2n}} \rangle \cong [S^{2n}, O] \xrightarrow{J} [S^{2n}, G] \xrightarrow[\cong]{q^*} [S^{2n-1} \cup_2 e^{2n}, G]$.
\end{enumerate}
\end{lemma}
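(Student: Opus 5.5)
The idea is to compare the two normal invariants through the fibration $SO \to SG \xrightarrow{\tau} G/O$ after restricting to the lower skeleton $X = S^{2n-1}\cup_2 e^{2n}$. Since $\eta(P_g) = \tau_*\eta^t(P_g,\bar P_g)$ and $\tau_*$ is natural, we have $j^*\eta(P_g) = \tau_*\, j^*\eta^t(P_g,\bar P_g)$ in $[X, G/O]$. So $j^*\eta(P_g)=0$ if and only if $j^*\eta^t(P_g,\bar P_g)$ lies in the kernel of $\tau_*\colon [X,SG]\to[X,G/O]$. By exactness of the Puppe sequence $[X,SO]\xrightarrow{J}[X,SG]\xrightarrow{\tau_*}[X,G/O]$, this kernel is exactly $J[X,SO]$. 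The lemma therefore reduces to computing $J([X,SO])\subseteq [X,SG]$ in the two residue classes of $2n$ mod $8$.

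To compute $[X,SO]$, I would use the cofibre sequence $S^{2n-1}\xrightarrow{2} S^{2n-1}\to X \to S^{2n}\xrightarrow{2} S^{2n}$. This gives the exact sequence
\[
\pi_{2n}(SO)\xrightarrow{2}\pi_{2n}(SO)\xrightarrow{q^*}[X,SO]\xrightarrow{i^*}\pi_{2n-1}(SO)\xrightarrow{2}\pi_{2n-1}(SO).
\]
By Bott periodicity, $\pi_{2n}(SO)$ and $\pi_{2n-1}(SO)$ depend only on $2n \bmod 8$:
\begin{itemize}
\item For $2n\equiv 4\pmod 8$: $\pi_{2n}(SO)=0$ and $\pi_{2n-1}(SO)=\mathbb Z$, so $[X,SO]=0$.
\item For $2n\equiv 6\pmod 8$: $\pi_{2n}=\pi_{2n-1}=0$, so $[X,SO]=0$.
\item For $2n\equiv 0\pmod 8$: $\pi_{2n}(SO)=\mathbb Z_2$ and $\pi_{2n-1}(SO)=\mathbb Z$, and multiplication by $2$ is injective on $\mathbb Z$. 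Hence $[X,SO]\cong q^*\pi_{2n}(SO)=\mathbb Z_2\langle q^*\xi_{\epsilon}\rangle$.
\end{itemize}
Thus in cases $4,6$ the kernel of $\tau_*$ is trivial, which gives (i). In case $0$, naturality of $J$ gives $J(q^*\xi_\epsilon)=q^*J(\xi_\epsilon)$, so the kernel is $\{0,\,q^*J(\xi_\epsilon)\}$, which gives (ii).

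The main obstacle is the identification $[S^{2n},G]\xrightarrow{q^*}[X,G]$ quoted in (ii), together with making sure $q^*J(\xi_\epsilon)$ is genuinely nonzero, so that the statement is not vacuous. For nontriviality I would use Adams' result that $J$ is injective on $\pi_{8k}(SO)$, so $J(\xi_\epsilon)$ is the nonzero element $\mu$-type class. Injectivity of $q^*$ on it then follows from the exact sequence $\pi^s_{2n}\xrightarrow{2}\pi^s_{2n}\xrightarrow{q^*}[X,SG]$, provided $J(\xi_\epsilon)$ is not divisible by $2$. That holds because the image of $J$ in degree $8k$ is a $\mathbb Z_2$ summand detected by the $d$/$e$-invariant.

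A second point to check is that everything sits in the correct path components. $[X,SG]$ and $[X,G]$ agree on the degree-one component, and the $[1]*$ identification used in Theorem~\ref{thm:3.9} is compatible with the group structure for which $\tau_*$ is a homomorphism. This holds because $X$ is a suspension, so both group structures coincide.
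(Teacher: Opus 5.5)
Your proposal is correct and follows essentially the paper's argument: you identify $\ker\tau_*$ on $[S^{2n-1}\cup_2 e^{2n},SG]$ with $J\bigl([S^{2n-1}\cup_2 e^{2n},SO]\bigr)$, then compute that group from the Moore-space cofibre sequence and Bott periodicity, getting $0$ for $2n\equiv 4,6 \pmod 8$ and $q^*\pi_{2n}(SO)\cong\mathbb{Z}_2$ for $2n\equiv 0\pmod 8$. Your nontriviality discussion is not needed for the equivalence, and one name in it is off: in stem $8k$ the class $J(\xi_\epsilon)$ is $\eta$ times the generator of $\operatorname{Im}J$ in stem $8k-1$ (e.g.\ $\eta\sigma$ for $k=1$), not a $\mu$-type class, although your direct-summand argument for non-divisibility by $2$ still goes through.
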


\begin{proof}
Let $V = V_{2n+1, 2} \simeq \left( S^{2n-1} \cup_2 e^{2n} \right) \cup_{\rho} e^{4n-1}$, $\Sigma^2 \rho = 0$. We have a commutative diagram:
\begin{equation}\label{eq:4.1}\tag{4.1}
\begin{tikzcd}[scale=0.85, transform shape]
0 \arrow[r] & {[S^{4n-1}, O]} \arrow[r] \arrow[d, "J"] & {[V, O]} \arrow[r, "j^*"] \arrow[d, "J"] & {[S^{2n-1} \cup_2 e^{2n}, O]} \arrow[r] \arrow[d, "J"] & 0 \\
0 \arrow[r] & {[S^{4n-1}, G]} \arrow[r] \arrow[d, "\tau_*"] & {[V, G]} \arrow[r, "j^*"] \arrow[d, "\tau_*"] & {[S^{2n-1} \cup_2 e^{2n}, G]} \arrow[r] \arrow[d, "\tau_*"] & 0 \\
0 \arrow[r] & {[S^{4n-1}, G/O]} \arrow[r] & {[V, G/O]} \arrow[r, "j^*"] & {[S^{2n-1} \cup_2 e^{2n}, G/O]} \arrow[r] & 0
\end{tikzcd}
\end{equation}
\begin{align*}
j^* \eta(P_{\alpha}) = 0 &\iff j^* \tau_* (\eta^t(P_{\alpha}, \bar{P}_{\alpha})) = 0 \\
&\iff \tau_* j^* (\eta^t(P_{\alpha}, \bar{P}_{\alpha})) = 0 \\
&\iff j^* (\eta^t(P_{\alpha}, \bar{P}_{\alpha})) \in \mathrm{Im}\,J.
\end{align*}
We have the cofiber sequence $S^{2n-1} \xrightarrow{\times 2} S^{2n-1} \xrightarrow{i} S^{2n-1} \cup_2 e^{2n} \xrightarrow{q} S^{2n} \xrightarrow{\times 2} S^{2n}$, leading to the following exact commutative diagram:
\begin{equation}\label{eq:4.2}\tag{4.2}
\begin{tikzcd}[scale=0.82, transform shape]
{[S^{2n}, O]} \arrow[r, "\times 2"] \arrow[d, "J"] & {[S^{2n}, O]} \arrow[r, "q^*"] \arrow[d, "J"] & {[S^{2n-1} \cup_2 e^{2n}, O]} \arrow[r, "i^*"] \arrow[d, "J"] & {[S^{2n-1}, O]} \arrow[r, "\times 2"] \arrow[d, "J"] & {[S^{2n-1}, O]} \arrow[d, "J"] \\
{[S^{2n}, G]} \arrow[r] & {[S^{2n}, G]} \arrow[r, "q^*"] & {[S^{2n-1} \cup_2 e^{2n}, G]} \arrow[r, "i^*"] & {[S^{2n-1}, G]} \arrow[r] & {[S^{2n-1}, G]}
\end{tikzcd}
\end{equation}
\begin{enumerate}[label=\textup{\arabic*)}]
\item For $2n \equiv 2, 4, 6 \pmod{8}$, $[S^{2n}, O] = 0$, so $[S^{2n-1} \cup_2 e^{2n}, O] \xrightarrow{\cong} \ker(\times 2)$. There are three sub-cases:
\begin{enumerate}[label=\textup{(\alph*)}]
\item $2n \equiv 6 \pmod{8}$: $[S^{2n-1}, O] = 0$, so $[S^{2n-1} \cup_2 e^{2n}, O] = 0$.
\item $2n \equiv 4 \pmod{8}$: $\mathbb{Z} \cong \pi_3(O) \xrightarrow{\times 2} \pi_3(O) \cong \mathbb{Z}$, so $\ker(\times 2) = 0$ and $[S^{2n-1} \cup_2 e^{2n}, O] = 0$.
\item $2n \equiv 2 \pmod{8}$: $\mathbb{Z}_2 \cong \pi_1(O) \xrightarrow{\times 2} \pi_1(O) \cong \mathbb{Z}_2$, so $\ker(\times 2) = \mathbb{Z}_2$ and $[S^{2n-1} \cup_2 e^{2n}, O] \cong \mathbb{Z}_2 \cong [S^{2n-1}, O]$.
\end{enumerate}
(a) ,  (b) proves (i).
\item For $2n \equiv 0 \pmod{8}$: $\mathbb{Z}_2  \xrightarrow{\times 2} \mathbb{Z}_2 \to [S^{2n-1} \cup_2 e^{2n}, O] \to \mathbb{Z} \xrightarrow{\times 2} \mathbb{Z}$, giving $\mathbb{Z}_2 \cong [S^{2n}, O] \xrightarrow[q^*]{\cong} [S^{2n-1} \cup_2 e^{2n}, O]$. So, $Im(J) = Im \ (\mathbb{Z}_2 \langle \xi_{\epsilon_{2n}} \rangle \cong [S^{2n}, O] \xrightarrow{J}[S^{2n}, G]  \xrightarrow[q^*]{\cong} [S^{2n-1} \cup_2 e^{2n}, G])$. This proves (ii).
\end{enumerate}
\end{proof}


\begin{theorem}\label{thm:4.2}
\leavevmode\begin{enumerate}[label=\textup{\arabic*)}]
\item When $2n \equiv 4, 6 \pmod{8}$: $P_g$ is an almost diffeomorphism if and only if $j^*\eta(P_g) = 0$, if and only if $\{g\} = i^*\{\alpha\}$ and $\{\alpha\}$ is divisible by $2$.
\item When $2n \equiv 0 \pmod{8}$: $P_g$ is an almost diffeomorphism if and only if $j^*\eta(P_g) = 0$, if and only if $\{g\} = i^*\{\alpha\}$ and $HD\{\alpha\}$ or $HD\{\alpha\} - J(\xi)$ is divisible by $2$.
\item When $2n \equiv 2 \pmod{8}$: $j^*\eta(P_g) = 0$ implies $q^*\{g\} = \hat{\xi}_1$ or $0$, where $\hat{\xi}_1$ satisfies $HD(\hat{\xi}_1) = J(\xi_1)$. This is a necessary condition only.
\end{enumerate}
\end{theorem}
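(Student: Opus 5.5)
The plan is to combine Proposition~\ref{prop:cstar} with Lemma~\ref{lem:4.1}, and then translate the condition on $j^*\eta^t(P_g,\bar P_g)$ into a condition on $g$ using Theorem~\ref{thm:3.9}.

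\textbf{First equivalence.} By Proposition~\ref{prop:cstar} and the exactness of the bottom row of~\eqref{eq:4.1}, $P_g$ is homotopic to an almost diffeomorphism if and only if $\eta(P_g)\in\operatorname{Im}c^*=\ker j^*$. This gives the first ``if and only if'' in every case.

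\textbf{Reduction to stable cohomotopy.} Lemma~\ref{lem:4.1} turns $j^*\eta(P_g)=0$ into a condition on the tangential invariant:
\begin{itemize}
\item for $2n\equiv 4,6 \pmod 8$, the condition is $j^*\eta^t(P_g,\bar P_g)=0$;
\item for $2n\equiv 0\pmod 8$, the condition is $j^*\eta^t(P_g,\bar P_g)\in\{0,\,q^*J(\xi_\epsilon)\}$.
\end{itemize}
Next I compute $j^*\eta^t(P_g,\bar P_g)$ from Theorem~\ref{thm:3.9}(4). It equals $D(\hat g)_H$, transported through the duality diagram~\eqref{eq:3.38}, where $\hat g=q^*\{g\}$ is the image of $\{g\}\in\pi^s_{4n-1}(S^{2n-1}\cup_2e^{2n})$ under $q_*$. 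Since $H$ and $D$ are isomorphisms, the condition becomes a statement about the stable class $\{g\}$.

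Vanishing of $D(\hat g)$, i.e.\ $q_*\{g\}=0$, means $\{g\}=i_*\{\alpha\}$ for some $\alpha\in\pi^s_{4n-1}(S^{2n-1})$, by exactness of the middle row of~\eqref{eq:3.38}. In that case Theorem~\ref{thm:3.9}(3) shows that the full invariant $j^*\eta^t$ equals $q^*(\alpha_H)=HD\,q^*\{\alpha\}$. Exactness of the top row of~\eqref{eq:3.38} then gives $q^*(\alpha_H)=0$ exactly when $\alpha_H\in\operatorname{Im}(\times 2)$, i.e.\ when $\{\alpha\}$ (equivalently $HD\{\alpha\}$) is divisible by $2$. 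This uses the cofibre sequence for the degree-$2$ map, whose first map is $\times 2$ on $[S^{2n},SG]$. That proves case 1).

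\textbf{Case 2).} For $2n\equiv0\pmod 8$ the set of allowed values is $\{0,\,q^*J(\xi)\}$. The condition $q^*(HD\{\alpha\})=q^*J(\xi)$ is equivalent to $HD\{\alpha\}-J(\xi)\in\ker q^*=2[S^{2n},SG]$. So the criterion is that either $HD\{\alpha\}$ or $HD\{\alpha\}-J(\xi)$ is divisible by $2$.

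\textbf{Main obstacle.} The delicate point, in both cases 1) and 2), is justifying that $j^*\eta^t=0$ (or $=q^*J(\xi)$) forces $q_*\{g\}=0$ first. Theorem~\ref{thm:3.9} only controls the $i^*j^*$-component and the $\operatorname{Im}q^*$ part separately. I would argue this through the short exact sequence $\operatorname{Im}q^*\to[S^{2n-1}\cup_2e^{2n},SG]\to[S^{2n-1},SG]$ together with the composition formula from Theorem~\ref{thm:3.10}. Write $P_g$ up to homotopy as a composite $P_{\beta}\circ P_{i_*\alpha}$; its invariant then splits as $i^*$-part $D(\hat g)_H$ plus a $q^*$-part. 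The value $q^*J(\xi)$ lies in $\ker i^*$, so the $i^*$-part must vanish in both cases. I also need to check that the sign ambiguities $\pm$ do not affect divisibility by $2$, which is immediate.

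\textbf{Case 3).} For $2n\equiv2\pmod 8$, Lemma~\ref{lem:4.1} shows that $\operatorname{Im}J\cong\mathbb Z_2$ is detected by $i^*$. The same computation then only gives $j^*\eta^t\in\{0,\,\text{lift of }J(\xi_1)\}$, so that $q^*\{g\}=0$ or $\hat\xi_1$ with $HD(\hat\xi_1)=J(\xi_1)$. This is a necessary condition only, because the component in $\operatorname{Im}q^*$ is not pinned down.
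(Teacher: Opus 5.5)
Your proposal is correct and is essentially the paper's argument. You use the criterion $j^*\eta(P_g)=0$ from Proposition~\ref{prop:cstar} and reduce to $j^*\eta^t(P_g,\bar P_g)$ via Lemma~\ref{lem:4.1}. You then observe that the $i^*$-component $D(\hat g)_H$ from Theorem~\ref{thm:3.9}(4) must vanish, which forces $\{g\}=i_*\{\alpha\}$. Finally you apply exactness of the top row of~\eqref{eq:3.38} to $q^*(\alpha_H)$ to get the divisibility-by-$2$ conditions.

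The differences are minor. The decomposition $P_g\simeq P_\beta\circ P_{i_*\alpha}$ in your ``main obstacle'' paragraph is unnecessary, because Theorem~\ref{thm:3.9}(4) already applies to arbitrary $g$, and that is how the paper uses it. Your derivation of case 3) via the isomorphism $i^*\colon[S^{2n-1}\cup_2e^{2n},O]\cong[S^{2n-1},O]$ fills in a step the paper leaves without a written proof.
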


\begin{proof}[proof of (1)]
For $2n \equiv 4, 6 \pmod{8}$, take $g\colon S^{4n-1} \to S^{2n-1} \cup_2 e^{2n}$ with $q^*\{g\} = \hat{\beta} \neq 0$. Then $i^*(j^*\eta^t(P_g, \bar{P}_g)) = H(D(\hat{\beta})) \neq 0$, so $j^*\eta^t(P_g, \bar{P}_g) = 0$ forces $\{g\} = i^*\{\alpha\}$, i.e.\ $g\colon S^{4n-1} \to S^{2n-1} \hookrightarrow S^{2n-1} \cup_2 e^{2n}$. Then $j^*\eta^t(P_g, \bar{P}_g) = \pm q^*(HD\{\alpha\})$, so $j^*\eta^t(P_g, \bar{P}_g) = 0 \Rightarrow q^*(HD\{\alpha\}) = 0 \Rightarrow i_*\{\alpha\} = 0$, i.e.\ $2 \mid \{\alpha\}$.Going backwards, this condition is also sufficient. \phantom\qedhere
\end{proof}
\begin{proof}[proof of (2)]
     For $2n \equiv 0 \pmod{8}$, a similar computation gives $q^*HD\{\alpha\} = 0$ or $q^*J(\xi)$, i.e.\ $HD\{\alpha\}$ or $HD\{\alpha\} - J(\xi)$ is divisible by $2$, as necessary and sufficient condition for $j^*\eta(P_g) = 0$.

\end{proof}

\begin{remark}\label{rem:4.3}
When $2n+1 \equiv 1, 5 \pmod{8}$, elements of $\epsilon(V)$ have the form $\hat{\eta} \circ P_g$, where $\hat{\eta}$ is any extension of $\mathrm{Id} + i\eta p \in \epsilon(S^{2n-1} \cup e^{2n})$. Lemma~\ref{lem:4.1} gives a necessary and sufficient condition for $P_g$ to be an almost diffeomorphism; an analogous condition for $\hat{\eta} \circ P_g$ requires computing $\eta(\hat{\eta})$. For $2n+1 \equiv 7 \pmod{8}$, all elements of $\epsilon(V)$ are pinch maps $P_g$, so we obtain an if-and-only-if condition for the full group $\epsilon(V)$ in that case. ~\cite[Theorem~2.4]{nomura}.
\end{remark}

\subsection{Rigidity results for $V_{2n+2,2}$}

\begin{theorem}\label{thm:4.4}
$P_g$ is homotopic to an almost diffeomorphism if and only if $g = \alpha \vee \beta$ and $\{\alpha\}, \{\beta\} \in \mathrm{Im}\,J$.
\end{theorem}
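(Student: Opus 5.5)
We prove Theorem~\ref{thm:4.4} by the same criterion used in \S\,4.1. By Proposition~\ref{prop:cstar}, $P_g$ is homotopic to an almost diffeomorphism if and only if $\eta(P_g)\in\operatorname{Im}c^*$. By the split exact sequence~\eqref{eq:3.17}, this is equivalent to $i^*\eta(P_g)=0$ in $[S^{2n},G/O]\oplus[S^{2n+1},G/O]$. Write $g=\alpha\vee\beta$ via the splitting $\pi_{4n+1}(V)\cong\pi_{4n+1}(S^{2n})\oplus\pi_{4n+1}(S^{2n+1})$ (using the CW model, as in Theorem~\ref{thm:3.4}). Since $\eta(P_g)=\tau_*\eta^t(P_g,\bar P_g)$ and $i^*$ commutes with $\tau_*$, the condition becomes $\tau_*\,i^*\eta^t(P_g,\bar P_g)=0$.

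\textbf{Step 1: compute $i^*\eta^t$.} Use the exact sequence $[S^m,O]\xrightarrow{J}[S^m,SG]\xrightarrow{\tau_*}[S^m,G/O]$ from the fibration $SO\to SG\to G/O$. It shows that, on each sphere summand, $\tau_*(x)=0$ if and only if $x\in\operatorname{Im}J$. Next, compute $i^*\eta^t(P_g,\bar P_g)$ with the composition formula, as in~\eqref{eq:3.16} and in the proof of Theorem~\ref{thm:3.4}, writing $P_g\simeq P_\beta\circ P_\alpha$. In the CW representation, Theorem~\ref{thm:3.4} gives $i^*\eta^t(P_\alpha)=(\pm\{\alpha\},0)$ and $i^*\eta^t(P_\beta)=(0,\pm\{\beta\})$ after the correction by $\ell\circ\eta\circ\beta$. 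The $i^*c^*$ cross term vanishes. Thus
\[
i^*\eta^t(P_g,\bar P_g)=(\pm\{\alpha\},\pm\{\beta\})\in[S^{2n},SG]\oplus[S^{2n+1},SG],
\]
up to a possible $\{\beta\circ\eta\}$ term.

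\textbf{Step 2: the obstacle.} The main difficulty is the ambiguous term $\pm\{\beta\circ\eta\}$ that appears in Lemma~\ref{lem:3.2}. I would handle it in two ways. First, I would try to use the CW reformulation of Theorem~\ref{thm:3.4}, where both cases collapse to a diagonal answer. Second, I would use that $\operatorname{Im}J$ is closed under composition with $\eta$: if $\beta\in\operatorname{Im}J$, then $\beta\circ\eta=J(\xi)\circ\eta=J(\xi\circ\eta)$, since $J$ is compatible with precomposition by suspended maps. With this, the off-diagonal term lies in $\operatorname{Im}J$ exactly when $\{\beta\}$ does. Hence the condition ``both components lie in $\operatorname{Im}J$'' is equivalent to $\{\alpha\},\{\beta\}\in\operatorname{Im}J$ in either case. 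The signs $\pm$ are harmless, since $\operatorname{Im}J$ is a subgroup.

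\textbf{Step 3: conclude.} Combining the steps, $i^*\eta(P_g)=0$ if and only if $\tau_*\{\alpha\}=0$ and $\tau_*\{\beta\}$ (or $\tau_*(\{\beta\}+\{\beta\circ\eta\})$) vanishes. This holds if and only if $\{\alpha\},\{\beta\}\in\operatorname{Im}J$. By Proposition~\ref{prop:cstar}, this is exactly the condition for $P_g$ to be homotopic to an almost diffeomorphism.
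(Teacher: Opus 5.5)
Your proposal is correct and follows the paper's overall structure. Proposition~\ref{prop:cstar} and the split sequence~\eqref{eq:3.17} reduce the question to $i^*\eta(P_g)=0$. Lemmas~\ref{lem:3.1} and~\ref{lem:3.2}, together with the composition formula and $i^*c^*=0$, compute $i^*\eta^t(P_g,\bar P_g)$. Finally, $\ker\tau_*=\operatorname{Im}J$ on each sphere summand.

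The real difference is how you dispose of the ambiguous cross term $\{\beta\circ\eta\}$. The paper argues case by case mod $8$. For $2n\equiv 2,4,6$ it uses $\operatorname{Im}J_{2n}=0$, which forces $\{\beta\}=0$ and kills the cross term. For $2n\equiv 0$ it quotes $\operatorname{Im}J_{2n}=\mathbb{Z}_2\{\eta x_{2k}\}$ and $\operatorname{Im}J_{2n+1}=\mathbb{Z}_2\{\eta^2x_{2k}\}$ from Adams' computation. You instead use the naturality $J(\xi)\circ\eta=J(\xi\circ\eta)$, i.e.\ $\operatorname{Im}J\circ\eta\subseteq\operatorname{Im}J$. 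This is exactly what the paper's $2n\equiv 0$ check verifies by hand. Your route is uniform in $n$ and needs no knowledge of $\operatorname{Im}J$ in particular stems. The paper's route gives the more explicit by-product that $\{\beta\}$ must vanish unless $2n\equiv 0\pmod 8$.

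Two things need tidying.

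\emph{The ``first way'' in Step~2 does not work.} Theorem~\ref{thm:3.4} does not collapse to a diagonal answer. It still leaves the two possibilities $(\pm\{\beta\},\pm\{\eta\circ\beta\})$ and $(\pm\{\beta\},0)$. So the closure argument is what actually carries Step~2.

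\emph{Watch the degrees.} $\{\beta\}$, coming from $\beta\in\pi_{4n+1}(S^{2n+1})$, lies in stem $2n$, hence in $[S^{2n},SG]$. Both $\{\alpha\}$ and $\{\beta\circ\eta\}$ lie in stem $2n+1$, hence in $[S^{2n+1},SG]$. The correct form is therefore $(\pm\{\beta\},\,\pm\{\alpha\}+\epsilon\{\beta\circ\eta\})$ with $\epsilon\in\{0,\pm1\}$. Your ordering $(\pm\{\alpha\},\pm\{\beta\})$ is swapped, and the expression $\{\beta\}+\{\beta\circ\eta\}$ in Step~3 adds elements of different stems.

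With the correct placement your argument reads as follows. The first coordinate forces $\{\beta\}\in\operatorname{Im}J$. Hence $\{\beta\circ\eta\}\in\operatorname{Im}J$. Hence the second coordinate lies in $\operatorname{Im}J$ if and only if $\{\alpha\}$ does.

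Only the direction $\{\beta\}\in\operatorname{Im}J\Rightarrow\{\beta\circ\eta\}\in\operatorname{Im}J$ is needed. Your phrase ``exactly when'' overstates it: $\{\beta\circ\eta\}$ can lie in $\operatorname{Im}J$ (for instance vanish) without $\{\beta\}$ doing so.
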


\begin{proof}
Let $V = V_{2n+2, 2} \simeq (S^{2n} \vee S^{2n+1}) \cup_{\rho} e^{4n+1}$. Every homotopy equivalence has the form $P_g \circ \phi$, where $\phi$ is a diffeomorphism~\cite[Theorem~2.4]{nomura}, and $g\colon S^{4n+1} \xrightarrow{\alpha \vee \beta} S^{2n} \vee S^{2n+1}$. In the exact sequence for $\epsilon(V)$, Whitehead products are quotiented, so $P_g = P_{\alpha} \circ P_{\beta}$. We have the following commutative diagram:
\begin{equation}\label{eq:4.3}\tag{4.3}
\begin{tikzcd}[scale=0.85, transform shape]
& {[V, SO]} \arrow[r, "i^*"] \arrow[d, "J"] & {[S^{2n}, SO] \oplus [S^{2n+1}, SO]} \arrow[d, "J"] \\
\epsilon^t(V) \arrow[r, "\eta^t"] \arrow[d, "F"] & {[V, SG]} \arrow[r, "i^*"] \arrow[d, "\tau_*"] & {[S^{2n}, SG] \oplus [S^{2n+1}, SG]} \arrow[d, "\tau_*"] \\
\epsilon(V) \arrow[r, "\eta"] & {[V, G/O]} \arrow[r, "i^*"] & {[S^{2n}, G/O] \oplus [S^{2n+1}, G/O]}
\end{tikzcd}
\end{equation}
\begin{align*}
i^* \eta(P_g) &= i^* \eta(P_{\alpha}) + i^* \eta(P_{\beta}) \\
&= \tau_* (i^* \eta^t(P_{\alpha}, \bar{P}_{\alpha})) + \tau_* (i^* \eta^t(P_{\beta}, \bar{P}_{\beta})) \\
&= \tau_*(0, \{\pm \alpha\}) + \tau_*(\{\pm \beta\}, \pm \{\beta \circ \eta\}) \quad\text{or}\quad \tau_*(0, \pm \alpha) + \tau_*(\pm \beta, 0) \\
&= \tau_*(\{\pm \beta\}, \{\pm \alpha \pm \beta \circ \eta\}) \quad\text{or}\quad \tau_*(\{\pm \beta\}, \{\pm \alpha\}).
\end{align*}
For the first option, $i^* \eta(P_g) = 0 \iff \{\beta\} \in \mathrm{Im}\,J_{2n}$ and $\{\alpha \pm (\beta \circ \eta) \} \in \mathrm{Im}\,J_{2n+1}$. Since $\mathrm{Im}\,J_{2n} = 0$ for $2n = 2, 4, 6$, this reduces to $\{\beta\} = 0$ and $\{\alpha\} \in \mathrm{Im}\,J_{2n+1}$, which coincides with the second option's condition. For $2n \equiv 0 \pmod{8}$, $\mathrm{Im}\,J_{2n} = \mathbb{Z}_2\{\eta x_{2k}\}$ and $\mathrm{Im}\,J_{2n+1} = \mathbb{Z}_2\{\eta^2 x_{2k}\}$~\cite[Theorem~1.1.13]{ravenel}, so again both options give $\{\beta\} \in \mathrm{Im}\,J_{2n}$ and $\{\alpha\} \in \mathrm{Im}\,J_{2n+1}$.
\end{proof}
\vspace{-2em}

\begin{remark}\label{rem:4.5}
Since every homotopy equivalence of $V$ is homotopic to $P_g \circ \phi$ for some diffeomorphism $\phi$, Theorem~\ref{thm:4.4} completely determines which self-homotopy equivalences of $V$ are homotopic to almost diffeomorphisms.
\end{remark}

\section{Manifolds tangentially homotopy equivalent to $V_{n,2} \times S^k$, where $ k = 3,5 \ or \ 7\leq k \ ; \ k \neq 2^i-2$, $Dim (V_{n,2} \times S^k) \neq 2^i-2$ .}
Throughout this section, we assume $k \neq 2^i-2$, $Dim (V_{n,2} \times S^k) \neq 2^i-2$.

Let $M$ be a smooth, closed manifold of dimension $\geq 5$. We have a surgery exact sequence of sets:
$L_{n+1}(e) \to hS(M) \xrightarrow{\eta} [M,G/O] \to L_{n}(e)$. Suppose for every element $x \in Im(\eta) \subset [M,G/O]$, we have  $[b_x: N_x \xrightarrow{\simeq} M] \in hS(M)$, s.t. $\eta (b_x) = x$. Then, take any smooth manifold $N \overset{q}{\simeq} M$. Let, $\eta(q) = y$. We have, $[b_y: N_y \xrightarrow{\simeq} M] \in hS(M)$, s.t. $\eta (b_y) = y$. Then, by theorem of Novikov, $N$ is almost diffeomorphic $N_y$ (by a $bp$ element). So, just finding out one inverses for each element in $Im(\eta)$ tells us how, the manifolds homotopy equivalent to $M$ looks up to almost diffeomorphism. And, Doing the same thing for all elements in $Im(\eta) \cap Im(\tau_*)$, tells how manifolds tangentially homotopy equivalent to $M$ looks up to almost diffeomorphism. So, almost diffeomorphism classification in a given tangential homotopy type is synonymous to finding inverses for the group $Im(\eta) \cap Im(\tau^*)$. That is what we try to achieve for $M=V \times S^k$. In general we find inverses for a big subgroup of that and in some favorable cases we find it for all.

\subsection{When $V = V_{2n+1,2}$}

We have the following diagram:
\begin{equation}\label{eq:5.1}\tag{5.1}
\begin{tikzcd}[column sep=large, row sep=1.8em]
{[S^{4n-1+k},\, G/O]}
  \arrow[d, "\pi_*"] 
&
S(V \times S^k)
  \arrow[d]
&
{[S^{4n-1},\, G/O]}
  \arrow[d, "q^*"]
\\
{[\Sigma^{k}V,\, G/O]}
  \arrow[r, "c^*"]
  \arrow[d, "l^*"]
&
{[V \times S^k,\, G/O]}
  \arrow[r, "j^*"]
&
{[V,\, G/O] \oplus [S^k,\, G/O]}
  \arrow[d, "k^*"]
\\
{[S^{2n-1+k} \cup _2e^{2n+k},\, G/O]}
&
&
{[S^{2n-1} \cup_2 e^{2n},\, G/O]}
\end{tikzcd}
\end{equation}
Let $\Omega_2\colon\pi_{4n-1+k}(S^{2n-1} \cup_2 e^{2n}) \xrightarrow[\mathrm{stab.}]{D \circ \Sigma^N} [S^{2n-1+k} \cup_2 e^{2n+k},SG] \xrightarrow{\tau^*} [S^{2n-1+k} \cup_2 e^{2n+k},G/O]$, and set $J = \mathrm{Im}(\Omega_2)$. We will find inverses for all elements in $([S^{4n-1+k},G/O] \oplus J \oplus [V,G/O] \oplus [S^k,G/O]) \cap \mathrm{Im}(\tau_*)$. If $\Omega_2\colon\pi_{4n-1+k}(S^{2n-1} \cup_2 e^{2n}) \xrightarrow{\Sigma^N} \pi_{4n-1+k}^s(S^{2n-1} \cup_2 e^{2n})$ is onto,  then we found inverses in $hS(V \times S^k)$ for all elements in $[V \times S^k, G/O]$ . We first show there is an inverse set $B_j$ of $j^*$ realised by normal invariants, then the same for $c^*([S^{4n-1+k},G/O] \oplus J)$, and finally that their sum is also realised by compositions.

First, we find the normal invariant of certain pinch maps (or at least their restrictions), using the method described in~\cite{crowley:kervaire}. Let $\alpha\colon S^{4n-1+k} \xrightarrow{a} S^{2n-1} \xrightarrow{i} S^{2n-1} \cup_2 e^{2n} \hookrightarrow V \hookrightarrow V \times S^k$ and $p_{\alpha}\colon V\times S^{k}\;\longrightarrow\;V\times S^{k}\vee S^{4n-1+k} \;\xrightarrow{\mathrm{Id}\vee \alpha}\; V\times S^{k}$. This leads to the following diagrams:
\begin{equation}\label{eq:5.2}\tag{5.2}
\begin{tikzcd}[column sep=2.3em, row sep=1.8em]
S^{4n-1+k}\times \mathbb{R}^N
  \arrow[r, "a \times \mathrm{id}"]
  \arrow[d, "\cong","\phi^1"{left}]
&
S^{2n-1}\times \mathbb{R}^N
  \arrow[r, " t \times \mathrm{id}"]
  \arrow[d, "\simeq","\cong","\phi^0"{left}]
&
V \times S^k \times \mathbb{R}^N
  \arrow[d, "\cong","\phi"{left}]
\\
\nu_{S^{4n-1+k}}
  \arrow[r, "b_a"']
  \arrow[d, "\cong","\psi^1"{left}]
&
l^* \nu_{V \times S^k}
  \arrow[r, "b_t"]
  \arrow[d,"\cong","\psi^0"{left}]
&
\nu _{V \times S^k}
  \arrow[d,  "\cong","\psi"{left}]
\\
S^{4n-1+k}
  \arrow[r, "a"']
&
S^{2n-1}
  \arrow[r, "t"']
&
V \times S^k
\end{tikzcd}
\end{equation}
\begin{equation}\label{eq:5.3}\tag{5.3}
\begin{tikzcd}[row sep=1.7em, column sep=2.3em]
{S^{4n-1+k+N} \,\vee\, S^N}
  \arrow[r,"\Sigma^N a \vee Id"]
  \arrow[d, "\simeq","q"{left}]
&
{S^{2n-1+N} \,\vee\, S^N}
  \arrow[d, "\simeq","p"{left}]
&
{} \\
{\Sigma^N S^{4n-1+k}_+}
  \arrow[r, "\Sigma^N a_+"]
  \arrow[d,"\cong","T(\psi^1)"{left}]
&
{\Sigma^N S^{2n-1}_+}
  \arrow[r,"\Sigma^N t_+"]
  \arrow[d, "\cong","T(\psi^0)"{left}]
&
{\Sigma^N (V \times S^k)}
  \arrow[d, "\cong","T(\psi)"{left}]
\\
{\mathrm{Th}(\nu_{4n-1+k})}
  \arrow[r, "\mathrm{Th}(b_a)"']
&
{\mathrm{Th}(l^*\nu_{V \times S^k})}
  \arrow[r, "\mathrm{Th}(b_t)"']
&
{\mathrm{Th}(\nu_{V \times S^k})}
\end{tikzcd}
\end{equation}

Call the left map $m$, middle map $h$. They are homotopy equivalences. After taking Spanier-Whitehead dual $D=D_{4n-1+k+2N}$, we see that, the corresponding element in $\{V \times S^k,S^0\}$ is : $g_a:\Sigma^{N}\!\left(V \times S^{k}\right) \xrightarrow{\;\Sigma^{N} t^!\;} \Sigma^{N}\operatorname{Th} (\nu(l)) \xrightarrow{\;D(T(b_\alpha))\;} \Sigma^{N} S^{2n-1}_+ \xrightarrow{c_{s_0}} S^{N}.$ The map is, $c^*(f_\alpha)$, for some $f_\alpha \in \{\Sigma^k V \vee S^k,S^0\}$. \cite[Lemma~7.3, 7.4]{crowley:kervaire}. Now the map on $S^k$ component is zero for factoring through higher dimensional wedge of spheres (5.4). So, $f_\alpha \in \{\Sigma^k V,S^0\}$. We want to see $l^*(f_\alpha)$,i.e. the restriction of $f_\alpha$ on $S^{2n-1+k} \cup _2e^{2n+k}$. We have the following diagram:
\begin{equation}\label{eq:5.4}\tag{5.4}
\begin{tikzcd}[column sep=large, row sep=large]
\Sigma^{N}\!\left(V \times S^{k}\right)
  \arrow[r, "\Sigma^{N} t^!"]
&
\Sigma^{N}\operatorname{Th}\bigl(\nu(l)\bigr)
  \arrow[r, "D(T(b_\alpha))"]
  \arrow[d, "\simeq","D(m)"{left}]
&
\Sigma^{N} S^{2n-1}_{+}
  \arrow[r, "c_{s_0}"]
  \arrow[d,"D(h)","\simeq"{left}]
&
S^{N}
\\
S^{2n-1+k+N} \cup_2 e^{2n+k+N}
  \arrow[u, "l"{left}]
  \arrow[r,"q"]
&
S^{2n+k+N} \vee S^{4n-1+k+N}
  \arrow[r, "\{a\}\,\vee\,\mathrm{id}"']
&
S^N \vee S^{4n-1+k+N}
\arrow[ur,bend right = 20]
\end{tikzcd}
\end{equation}
The map on bottom-left is $q$ as the map has to be non-zero by analyzing homology or cohomology and $q$ is only non-zero choice. So, $l^*(f_\alpha) = \pm q^*\{ a\}$.   
We have a commutative diagram:
\begin{equation}\label{eq:5.5}\tag{5.5}
\begin{tikzcd}
\cdots \arrow[r] &
\{S^{2n+k},\,S^{0}\} \arrow[r,"q^{*}"] \arrow[d,"Id= D"',"\cong"{right}] &
\{S^{2n-1+k}\cup e^{2n+k},\,S^{0}\} \arrow[r,"i^{*}"] \arrow[d,"\cong\{D\}"] &
\{S^{2n-1+k},\,S^{0}\} \arrow[r] \arrow[d,"Id= D"',"\cong"{right}] &
\cdots
\\
\cdots \arrow[r] &
\{S^{4n-1+k},\,S^{2n-1}\} \arrow[r,"i^{*}"] &
\{S^{4n-1+k}, S^{2n-1}\cup_2 e^{2n}\} \arrow[r,"q^{*}"] &
\{S^{4n-1+k},\,S^{2n}\} \arrow[r] &
\cdots 
\end{tikzcd}
\end{equation}
Let $\{b\} \in \mathrm{Im}(q^*)$ with $q^*(\{\hat{b}\}) = \{b\}$. Take the map $f_{b}\colon S^{4n-1+k} \xrightarrow{\hat{b}} S^{2n-1}\cup_2 e^{2n} \hookrightarrow V \hookrightarrow V \times S^k$. We want to analyse the normal invariant of this map. We again have the following diagrams (the intermediate submanifold here is $V$, not $S^{2n-1}\cup_2 e^{2n}$):
\begin{equation}\label{eq:5.6}\tag{5.6}
\begin{tikzcd}[column sep=2.3em, row sep=1.8em]
S^{4n-1+k}\times \mathbb{R}^N
  \arrow[r, "\hat{b} \times \mathrm{id}"]
  \arrow[d, "\cong","\phi^1"{left}]
&
V\times \mathbb{R}^N
  \arrow[r, " t \times \mathrm{id}"]
  \arrow[d, "\simeq","\cong","\phi^0"{left}]
&
V \times S^k \times \mathbb{R}^N
  \arrow[d, "\cong","\phi"{left}]
\\
\nu_{S^{4n-1+k}}
  \arrow[r, "b_{\hat{b}}"']
  \arrow[d, "\cong","\psi^1"{left}]
&
t^* \nu_{V \times S^k}
  \arrow[r, "b_t"]
  \arrow[d,"\cong","\psi^0"{left}]
&
\nu _{V \times S^k}
  \arrow[d,  "\cong","\psi"{left}]
\\
S^{4n-1+k}
  \arrow[r, "\hat{b}"']
&
V
  \arrow[r, "t"']
&
V \times S^k
\end{tikzcd}
\end{equation}
\begin{equation}\label{eq:5.7}\tag{5.7}
\begin{tikzcd}[row sep=1.5em, column sep=2em]
&
S^{4n-1+k+N} \vee S^N 
    \arrow[r, "\Sigma^N \hat{b} \vee \mathrm{Id}"] 
    \arrow[d, "\simeq"] 
& 
{\begin{matrix} 
        S^{2n-1+N} \cup_2 e^{2n+N} \\ 
        \vee \; S^{4n-1+N} \vee S^N 
    \end{matrix}} 
    \arrow[d, "\simeq"] \\
&
\Sigma^N S^{4n-1+k}_+ 
    \arrow[d, "\simeq"]
    \arrow[r,"\Sigma^N \hat{b}_+"]
& 
\Sigma^N V_+ 
    \arrow[d, "\simeq"] \\
& \mathrm{Th}(S^{4n-1+k} \times \mathbb{R}^N) 
    \arrow[r] 
    \arrow[d, "\simeq"] 
& \mathrm{Th}(V \times \mathbb{R}^N) 
    \arrow[d] \\
S^{4n-1+k+N} 
   \arrow[uuur, bend left=50] 
   \arrow[uur, bend left=20] 
   \arrow[ur] 
   \arrow[r]
& \mathrm{Th}(\nu_{S^{4n-1+k}})  \arrow[r,"T(b_{\hat{b}})"] 
& \mathrm{Th}(t^* \nu_{V \times S^k})
\end{tikzcd} 
\end{equation}

After taking Spanier-Whitehead dual $D=D_{4n-1+k+2N}$, we see that, the corresponding element in $\{V \times S^k,S^0\}$ is : $g_{\hat{b}}:\Sigma^{N}\!\left(V \times S^{k}\right) \xrightarrow{\;\Sigma^{N} t^!\;} \Sigma^{N}\operatorname{Th} (\nu(t)) \xrightarrow{\;D(T(b_{\hat{b}}))\;} \Sigma^{N} S^{4n-1+k}_+ \xrightarrow{c_{s_0}} S^{N}.$ {\cite[Lemma~7.3, 7.4]{crowley:kervaire}}. The map is, $c^*(f_{\hat{b}})$, for some $f_{\hat{b}} \in \{\Sigma^k V \vee S^k,S^0\}$. But, from the diagram below the map is zero on $S^k$. so, $f_{\hat{b}} \in \{\Sigma^k V , S^0\}$ We want to see $i^*l^*(f_{\hat{b}})$,i.e. the restriction of $f_{\hat{b}}$ on $S^{2n-1+k}$. We have the following diagram:
\begin{equation}\label{eq:5.8}\tag{5.8}
\begin{tikzcd}[row sep=2em, column sep=large]
\Sigma^N (V \times S^k) 
    \arrow[r, "t!"] 
    \arrow[d, "\simeq"'] 
& \Sigma^N \mathrm{Th}(\nu_t) 
    \arrow[r] 
    \arrow[d, "\simeq"] 
& \Sigma^N S^{4n-1+k}_+ 
    \arrow[d, "\simeq"] \arrow[r] 
& S^N    
\\
\begin{aligned}
    & S^{2n-1+k+N} \cup_2 e^{2n+k+N} \\
    & \vee S^{k+N} \vee S^{4n-1+k+N} \\
    & \vee (S^{2n-1+N} \cup_2 e^{2n+N})  \vee S^{4n-1+N}
\end{aligned}
 \arrow[r, "p","\simeq"{below}] 
& \begin{matrix}
    S^{2n-1+k+N} \cup_2 e^{2n+k+N}\\ \vee S^{k+N} \\ \vee S^{4n-1+k+N}
\end{matrix} 
    \arrow[r, "D(\hat{b}) \vee \mathrm{Id}",{below}]
    \arrow[r, dashed, bend left=20, "D(\Sigma^N \hat{b})"] 
    \arrow[r, dashed, bend right=55, "\mathrm{Id}"'] 
& S^N \vee S^{4n-1+k+N} \arrow[ur,bend right = 25]
\end{tikzcd}
\end{equation}
Notice the following:

\begin{enumerate}[label=\textup{\arabic*)}]
\item The map is $c^*(f_{\hat{b}})$ for some $f_{\hat{b}}$, where $c\colon V \times S^k \to \Sigma^k V$ is the quotient map; we need only know it on $S^{2n-1+k+N} \cup_2 e^{2n+k+N} \vee S^{4n-1+k+N}$ (zero elsewhere), and we further restrict to $S^{2n-1+k}$.
\item On $S^{2n-1+k+N} \cup_2 e^{2n+k+N}$, the map $p$ is $f \vee g \colon S^{2n-1+k+N} \cup_2 e^{2n+k+N} \to S^{2n-1+k+N} \cup_2 e^{2n+k+N} \vee S^{k+N}$, where $f$ must be a homotopy equivalence; $g$ does not contribute since the next map is zero on $S^{k+N}$.
\item The final map is $S^{2n-1+k+N} \hookrightarrow S^{2n-1+k+N} \cup_2 e^{2n+k+N} \xrightarrow{\pm\mathrm{Id}} S^{2n-1+k+N} \cup_2 e^{2n+k+N} \xrightarrow{\pm D\{\hat{b}\}} S^N$. Whatever the choice, this equals $\pm i^*D\{\hat{b}\} = D(b)$.
\end{enumerate}

\begin{theorem}[See Diagram~\ref{eq:5.5}]\label{thm:5.1}
\leavevmode\begin{enumerate}[label=\textup{\arabic*)}]
\item If $\{\alpha\} = i^*(\{a\})$ where $a \in [S^{4n-1+k}, S^{2n-1}]$, then $\tilde{\eta}^t(P_{\pm i_*a},\, b_t \circ b_a) = c^*(f_a)$ with $l^*(f_a) = q^*\{a\}$, and $\eta^t(p_{\pm i_*a},\, \mathrm{Id}\,\#\,b_t \circ b_a) = c^*(H(f_\alpha))$ with $l^*(H(f_\alpha)) = q^*(H(\{a\}))$. The first lies in the $\{\mathord{-},S^0\}$ sequence and the second in $[\mathord{-},SG]$.
\item Let $\{b\} \in \mathrm{Im}(q^*)$ with $q^*(\{\hat{b}\}) = \{b\}$. Then $\tilde{\eta}^t(P_{\pm i_*\hat{b}},\, b_{\hat{b}} \circ b_t) = c^*(f_b)$ with $i^*l^*(f_b) = D(\{b\})$, and $\eta^t(p_{\pm i_*\hat{b}},\, \mathrm{Id}\,\#\,b_{\hat{b}} \circ b_t) = c^*(H(f_b))$ with $i^*l^*(H(f_b)) = H(D(\{b\}))$. The first lies in $\{\mathord{-},S^0\}$ and the second in $[\mathord{-},SG]$.
\item We can realise $([S^{4n-1+k},G/O] \oplus J \oplus [V,G/O] \oplus [S^k,G/O]) \cap \mathrm{Im}(\tau_*)$ by normal invariants, given that $\Omega_1$ is surjective onto $\mathrm{Im}(\tau_*)$.
\end{enumerate}
\end{theorem}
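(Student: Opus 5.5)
Parts (1) and (2) are packaged versions of the diagram chases (5.2)--(5.8). Part (3) follows from them together with the composition formula and the exact sequences in diagram (5.1).

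For (1), I take $\alpha=t\circ i\circ a$ and apply \cite[Lemmas~7.3, 7.4]{crowley:kervaire} exactly as in Lemma~\ref{lem:eta-pinch}. This writes $\tilde{\eta}^t(P_{\pm i_*a},b_t\circ b_a)$ as the composite $c_{s_0}\circ D(T(b_a))\circ\Sigma^N t^!$. Because the pinch takes place inside $V\times D^k_-$, the discussion after Lemma~\ref{lem:eta-pinch} (and Remark~\ref{rem:identity-on-M2}) shows that this composite factors through $c\colon V\times S^k\to\Sigma^kV\vee S^k$, so it equals $c^*(f_a)$. The $S^k$ summand vanishes, since it factors through the higher-dimensional wedge of spheres in (5.4). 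Restricting along $l$ and reading off (5.4), the lower composite is $q$ followed by $\{a\}\vee\mathrm{id}$ and then the collapse. The first map must be $\pm q$, because it is nonzero on $H^{2n+k}(-;\mathbb{Z}_2)$ and $[S^{2n-1+k}\cup_2e^{2n+k},S^{2n+k}]\cong\mathbb{Z}_2\{q\}$, as in Theorem~\ref{thm:3.5}. This gives $l^*f_a=\pm q^*\{a\}$. The $SG$ statement then follows by applying $H=[1]*\operatorname{adj}$, which is natural in the source, exactly as in the proof of Theorem~\ref{thm:3.9}(3).

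For (2), I argue in the same way using (5.6)--(5.8), with the intermediate submanifold taken to be $V$. The one subtlety is that $S^{2n-1}\cup_2e^{2n}$ is not a submanifold. I handle this as in Theorem~\ref{thm:3.6} and Remark~\ref{rem:3.8}: pass through $\Sigma^NV_+\simeq\Sigma^NV\vee S^N$, so that the adjoint is obtained by restricting $\tau^{-1}$. In (5.8) the map $p$ restricts on the Moore space to $f\vee g$, where $f$ is a self-equivalence (so $\pm\mathrm{Id}$ or $3\,\mathrm{Id}$ by \cite[Lemma~2.2]{homotopy5mfld}). The component $g$ dies because the next map is trivial on $S^{k+N}$. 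After restricting along $i$, the relation $2\,\mathrm{Id}\circ i=0$ gives $i^*l^*f_b=\pm i^*D\{\hat b\}=D(\{b\})$ by diagram (5.5). Applying $H$ again gives the $SG$ version.

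For (3), I follow the template of Theorems~\ref{thm:3.10} and~\ref{thm:3.11}. The lower splitting $[V,G/O]\oplus[S^k,G/O]$ in the image of $j^*$ is realised by products $P\times\mathrm{Id}_{S^k}$ and $\mathrm{Id}_V\times\phi_\Sigma$, using Theorem~\ref{thm:3.11} for $V$ and homotopy spheres for $S^k$. On the $c^*$ side, elements of $l^*$-image $q^*(\cdot)$ come from (1), and the lift $(B_g)$ of $\operatorname{Im}(\Omega_2)=J$ comes from (2); together these give everything in $[\Sigma^kV,-]$ modulo $\pi_*[S^{4n-1+k},G/O]$. That remaining group is $\Theta_{4n-1+k}$ acting by connected sum, and here $k\neq 2^i-2$ and $\dim\neq 2^i-2$ are used so that no Kervaire obstruction appears. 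The hypothesis that $\Omega_1$ is surjective onto $\operatorname{Im}(\tau_*)$ guarantees that the $q^*$-part is hit, since $\{a\}$ ranges over stable classes coming from unstable $a$.

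The main obstacle is summing these pieces. I would use Madsen's formula $\eta(g\circ f)=\eta(g)+(g^{-1})^*\eta(f)$ together with the facts that the pinch maps act trivially on $c^*$-classes and that $c^*(P^*)=\mathrm{Id}$, both by \cite[Lemma~2.6]{madsen}. From these, the cross terms $c^*(\ldots)$ vanish on restriction, just as in the computation (3.16). The delicate point is checking that the product maps $P\times\mathrm{Id}$ do not perturb the $[\Sigma^kV,-]$ summand. For that I would use \cite[Prop.~2.3]{bks} to identify $\eta(P\times\mathrm{Id})$ with the pullback along the projection.
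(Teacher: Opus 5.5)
Your proposal follows the paper's route: (1) and (2) are the diagram chases (5.2)--(5.8) with the same $\pm q$ and $f\vee g$ arguments, and (3) matches the paper's steps A--D. Those steps are: pinch maps from (2), corrected on the $q^*$-part by pinch maps from (1) using the $\Omega_1$ hypothesis; product maps $(\varphi^{(\Sigma,g)}\times\mathrm{Id})\circ(\mathrm{Id}\times c_{\Sigma^k})$ for $[V,G/O]\oplus[S^k,G/O]$; and connected sum with homotopy spheres to absorb the top-cell cross term $c^*g^*\eta(h)$ and the $[S^{4n-1+k},G/O]$ summand. One caution: (1) and (2) together only reach $(l^*)^{-1}(J)\cap\operatorname{Im}(\tau_*)$ modulo the top cell, not all of $[\Sigma^kV,G/O]$ as your phrasing suggests; that is why the statement is restricted to $J$ and why Corollary~\ref{cor:5.3} needs $\Omega_2$ surjective.
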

\begin{equation}\label{eq:5.9}\tag{5.9}
\begin{tikzcd}[row sep=1.4em, column sep=3em]
&{[\Sigma^{k}V,\, G/O]}
  \arrow[r,"c^{*}"]
  \arrow[d,"l^{*}"']
&
{[V\times S^{k},\, G/O]}\\
{[S^{2n+k},\, G/O]}
  \arrow[r,"q^{*}"]
&
{[S^{2n-1+k}\cup_{e} e^{2n+k},\, G/O]}
  \arrow[r,"i^{*}"]
&
{[S^{2n-1+k},\, G/O]}
\\
{[S^{2n+k},\, SG]}
  \arrow[r,"q^{*}"]
  \arrow[u,"\tau^*"]
&
{[S^{2n-1+k}\cup_{e} e^{2n+k},\, SG]}
  \arrow[r,"i^{*}"]
  \arrow[u,"\tau^*"]
&
{[S^{2n-1+k},\, SG]}
  \arrow[u,"\tau^*"]
\\
\{S^{2n+k},\, S^{0}\}
  \arrow[r,"q^{*}"]
  \arrow[u,"H","\equiv"{right}]
&
\{S^{2n-1+k}\cup_{e} e^{2n+k},\, S^{0}\}
  \arrow[r,"i^{*}"]
  \arrow[u,"H","\equiv"{right}]
&
\{S^{2n-1+k},\, S^{0}\}
  \arrow[u,"H","\equiv"{right}]\\
\{S^{4n-1+k},\, S^{2n-1}\}
  \arrow[r, "i^{*}"]
  \arrow[u,"D=Id","\cong"{right}]
&
\{S^{4n-1+k},\, S^{2n-1}\cup_{e} e^{2n}\}
\arrow[r, "q^{*}"]
\arrow[u,"D","\cong"{right}]
&
\{S^{4n-1+k},\, S^{2n}\}
\arrow[u,"D=Id","\cong"{right}]
\\
{[S^{4n-1+k},\, S^{2n-1}]}
  \arrow[r,"i^*",dashed]
  \arrow[u, "\Sigma_1^{*}"]
&
{[S^{4n-1+k},\, S^{2n-1}\cup_{e} e^{2n}]}
  \arrow[r,"q^*",dashed]
  \arrow[u, "\Sigma_2^{*}"]
&
{[S^{4n-1+k},\, S^{2n}]}
  \arrow[u, "\Sigma_3^{*}"]
\end{tikzcd}
\end{equation}
The left vertical map is called $\Omega_1$, the next $\Omega_2$, and the third $\Omega_3$. We set $\tau^*HD = R$.

\begin{proof}
Only statement~3) requires proof.

\noindent\textbf{A.} We first find an inverse set $E'$ of $\mathrm{Im}(\Omega_2)$ for the map $l^*$, with $E' \subseteq [\Sigma^k V,G/O]$, and show that $c^*(E')$ is fully realised by normal invariants of pinch maps.

Let $y \in [S^{4n-1+k}, S^{2n-1} \cup_2 e^{2n}]$ and let $R\{y\}$ be a generic element of $\operatorname{Im}\Omega_2$. Set $q_* y = b_y$. For any element in $\operatorname{Im}(q_* \circ \Sigma_2^*) = \operatorname{Im}(\Sigma_3^* q_*)$ we can choose an inverse of the form $\{x\} \in \{S^{4n-1+k}, S^{2n-1} \cup_2 e^{2n}\}$; for $\{c\} \in \operatorname{Im}(\Sigma_3^* q_*)$ call this inverse $\{\hat{c}\}$. In particular, $\{b_y\}$ has an inverse $\{\hat{b}_y\}$.
First, from the normal invariants of the pinch maps, we have:
\[
\eta^t(P_{\pm i_* \hat{b}_y}, \text{Id} \mathbin{\#} b_{\hat{b}_y} \circ b_t) = c^*(H(f_{\hat{b}_y})) \quad \text{and} \quad i^* l^*(H(f_{\hat{b}_y})) = H(D\{ b_y\}).
\]
Passing this to $G/O$, we get:
\[
\eta(P_{\pm i_* \hat{b}_y}) = c^*(\tau_* H(f_{l_y})),
\]
and checking the restriction yields:
\[
i^* l^*(\tau_* H(f_{b_y})) = \tau_* i^* l^*(H(f_{b_y})) = \tau_* H(D\{ b_y\}) = R(\{b_y\}).
\]
We also know $i^* R(\{y\}) = R(\{b_y\})$. Applying the long exact sequence, the difference yields:
\[
l^* H(f_{b_y}) - H D(\{y\}) = q^* \alpha \quad \text{for some} \quad \alpha \in [S^{2n+k}, SG].
\]
Passing this to $G/O$ via $\tau_*$ gives:
\begin{align*}
l^* \tau_* \bigl(H(f(b_y))\bigr) - R(\{y\}) &= \tau_* l^* \bigl(H(f(b_y))\bigr) - \tau_* H D(\{y\}) \\
&= \tau_* q^* \alpha \\
&= q^* \tau_* \alpha.
\end{align*}
Rearranging this gives $R(\{y\}) = l^*(\tau_* H(f(b_y))) + q^* \tau_* \alpha$.

Because $\Omega_1$ is surjective onto $\operatorname{Im} \tau_*$, we can set $q^* \tau_* \alpha = q^* \tau_* (H(D\{a_y\}))$, giving:
\[
R(\{y\}) = l^*(\tau_* H(f(b_y))) + q^* \tau_* \bigl(H(\{a_y\})\bigr).
\]

Similarly, for the other component we have:
\[
\eta^t(P_{\pm i_* a_y}, \text{Id} \mathbin{\#} b_l \circ b_{a_y}) = c^*(H(f_{a_y})) \quad \text{and} \quad l^*(H(f_{a_y})) = q^*(H\{a_y\}).
\]

Passing to $G/O$, we obtain $\eta(P_{\pm i_* a_y}) = \tau_* c^* (H(f_{a_y})) = c^* (\tau_* H(f_{a_y}))$, alongside:
\[
l^* (\tau_* H(f_{a_y})) = \tau_* (l^* H(f_{a_y})) = \tau_* q^* (H(\{a_y\})) = q^* \tau_* (H(\{a_y\})).
\]
Substituting this back completes the isolation of $R(\{y\})$:
\begin{align*}
R(\{y\}) &= l^* (\tau_* H(f(b_y))) + l^* (\tau_* H(f_{a_y})) \\
&= l^* \bigl(\tau_* H(f(b_y)) + \tau_* H(f_{a_y})\bigr).
\end{align*}
Thus $E' = \{ \tau_* H(f(b_y)) + \tau_* H(f_{a_y}) \mid R(\{y\}) \in \operatorname{Im} \Omega_2 \}$ is the inverse set of $\operatorname{Im}(\Omega_2)$.
We now show that $c^*(E')$ is realised by normal invariants.
$\eta(P_{\pm i_* a_y}) = c^* (\tau_* H(f_{a_y}));$ $\eta(P_{\pm i_* \hat{b}_y}) = c^* (\tau_* H(f(b_y)))$.
Indeed, $\eta(P_{\pm i_* \hat{b}_y} \circ P_{\pm i_* a_y}) = c^*(\tau_* H(f(b_y))) + c^*(\tau_* H(f_{a_y})) = c^*(x_{E'})$ for some $x_{E'} \in E'$. The splitting into a sum for the normal invariant comes from the analyzing the following map:
\begin{align}
S^{2n-1+k+N} \cup_2 e^{2n+k+N} & \qquad \qquad S^{2n-1+k+N} \cup_2 e^{2n+k+N} \nonumber\\
\vee S^{k+N} \vee S^{4n-1+k+N} & \xrightarrow{P_{\mp i_* \hat{b}_y}} \vee S^{k+N} \vee S^{4n-1+k+N} \xrightarrow{\eta^t(P_{\pm i_* a_y})} SG \to G/O\nonumber\\
\vee S^{2n-1+N} \cup_2 e^{2n+N} & \qquad \qquad \vee S^{2n-1+N} \cup_2 e^{2n+N} \nonumber\\
\vee S^{4n-1+N} & \qquad \qquad \vee S^{4n-1+N} \tag{5.10}\label{eq:5.10}
\end{align}
\\
$\eta(P_{\pm i_* {a}_y})$ non-zero on $S^{2n-1+k+N} \cup_2 e^{2n+k+N} \vee S^{4n-1+k+N}$
And, $\hat{b}_y : S^{4n-1+k+N} \longrightarrow S^{2n-1+N} \cup_2 e^{2n+N}$. So,
The composition is $\eta(P_{\pm i_* {a}_y})$.
Hence $c^*(E')$ is fully realised by normal invariants of pinch maps.

\begin{remark}\label{rem:A}
The same conclusion holds if $\Pi \circ \Omega_1$ is surjective onto $\operatorname{Im}(\Pi \circ \tau_*)$, where $\Pi$ is the quotient from the long exact sequence. Thus surjectivity on two-torsion elements suffices.
\end{remark}
\begin{proof}
$R(\{y\}) - l^* \tau_* H(f_{b_y}) = q^* \tau_* \alpha = \bar{q}_* \pi(\tau_* \alpha) = q_* \tau_* H(\{a_y\})$ for some $a_y \in [S^{4n-1+k}, S^{2n-1}]$, giving $R(\{y\}) = l^* \tau_* H(f_{b_y}) + q_* \tau_* H(\{a_y\})$; the rest of the argument is identical.
\end{proof}

\noindent\textbf{B.} For any $(\alpha, \beta) \in ([V, G/O] \oplus [S^k, G/O]) \cap \mathrm{Im}\,\tau_*$, the element $\pi_V^*\alpha + \pi_S^*\beta$ is realised by normal invariants.
\begin{equation}\label{eq:5.11}\tag{5.11}
\begin{tikzcd}[row sep=large, column sep=large]
  {[S^{4n-1+k}, G/O]} \arrow[r, "q^*"]
  & {[\Sigma^k V, G/O]} \arrow[r, "c^*"]
  & {[V \times S^k, G/O]} \arrow[r]
  & {[V, G/O] \oplus [S^k, G/O]} \arrow[l, bend left, dashed, "\pi_S^*"'] \arrow[l, bend right, dashed, "\pi_V^*"']
\end{tikzcd}
\end{equation}

Let $\eta(\phi) = \alpha$, $\eta(c_{\Sigma_k}) = \beta$ (we can do that because $k \neq 2^i-2$), where $c_{\Sigma_k}\colon \Sigma^k \xrightarrow{\cong} S^k$.
Set $\varphi = \varphi^{(\Sigma, g)}\colon V \# \Sigma \xrightarrow{h_{\Sigma}} V \xrightarrow{P(g)} V$ and
$G^{(\Sigma, g, \Sigma^k)}\colon V \# \Sigma \times \Sigma^k \xrightarrow{\mathrm{Id}\times c_{\Sigma^k}} V \# \Sigma \times S^k \xrightarrow{\varphi^{(\Sigma, g)} \times \mathrm{Id}} V \times S^k.$
Then $\eta(G^{\Sigma, g, \Sigma^k}) = \eta(\varphi \times \mathrm{Id}) + ((\varphi \times \mathrm{Id})^{-1})^* \eta(\mathrm{Id} \times c_{\Sigma^k}).$
Now, $\eta(\varphi \times \mathrm{Id}) = \pi_V^*(\eta(\varphi^{(\Sigma, g)}))$, where $\pi_V\colon V \times S^k \to V$ is the projection (the normal invariant of a product $f \times \mathrm{Id}_B$ equals $\pi_1^*(\eta(f))$; see~\cite[\S 3.3]{luck}). Also,
$\eta(\mathrm{Id} \times c_{\Sigma^k}) = \tilde{\pi}_S^*(\eta(c_{\Sigma^k}))$, where $\tilde{\pi}_S\colon V \# \Sigma \times S^k \to S^k$.
$((\varphi \times \mathrm{Id})^{-1})^* \tilde{\pi}_S^*(\eta(c_{\Sigma^k})) = \pi_S^*(\eta(c_{\Sigma^k}))$ from the commutative diagram below, which gives
$\eta(G^{(\Sigma, g, \Sigma^k)}) = \pi_V^*(\eta(\varphi^{(\Sigma, g)})) + \pi_S^*(\eta(c_{\Sigma^k})).$
\begin{equation}\label{eq:5.12}\tag{5.12}
\begin{tikzcd}
V \# \Sigma \times S^k \arrow[r, "\varphi \times Id"] \arrow[d, "\tilde{\pi}_S"'] & V \times S^k \arrow[d, "\pi_S"] \\
S^k \arrow[r, equal, "Id"'] & S^k
\end{tikzcd}
\quad \Rightarrow \quad
\begin{tikzcd}
\left[ V \times S^k, \frac{G}{O} \right] \arrow[r, "(\varphi \times Id)^*"] & \left[ V \# \Sigma \times S^k, \frac{G}{O} \right] \\
\left[ S^k, \frac{G}{O} \right] \arrow[u, "\pi_S^*"] \arrow[r, equal, "Id"'] & \left[ S^k, \frac{G}{O} \right] \arrow[u, "\tilde{\pi}_S^*"']
\end{tikzcd}
\end{equation}
\noindent\textbf{C.} We show that any element of $[c^*(E') + \pi_V^{*}([V,G/O]) + \pi_S^{*}([S^k,G/O])] \cap \mathrm{Im}(\tau^*)$ is realised by normal invariants of tangential homotopy equivalences.

Let $x = c^*(x_{E'}) + (\pi_V^*(\alpha) + \pi_S^*(\beta))$ be a generic element in this subset, where $x_{E'} \in E'$. We wish to generate this element via normal invariants.

Consider the following composition of maps:
\[
V \mathbin{\#} \Sigma \times \Sigma^k \xrightarrow{h} V \times S^k \xrightarrow[P_g]{m} V \times S^k
\]
where $h = (\mathrm{Id} \times c_{\Sigma^k}) \circ (\varphi^{(\Sigma, g)} \times \mathrm{Id})$ and $g$ is the composition $S^{4n-1+k} \xrightarrow{\tilde{g}} S^{2n-1} \cup_2 e^{2n} \hookrightarrow V \hookrightarrow V \times S^k$. 
Here, we have $\eta(P_g) = c^*(x_{E'})$. 

Applying the composition formula yields:
\[
\eta(m \circ h) = \eta(m) + (m^{-1})^* \eta(h).
\]
Now, we need to precisely identify the second term. It corresponds to the map:
\[
(m^{-1})^* \eta(h) : V \times S^k \xrightarrow[P(-g)]{m^{-1}} V \times S^k \xrightarrow{\eta^t(h)} SG \xrightarrow{\tau} G/O.
\]
Because of the structure of the pinch map $P(-g)$, the resulting map takes the form:
\[
\eta(h) - c^* (\eta(h) \circ \tilde{g}) = \eta(h) - c^* g^* (\eta(h)).
\]

Substituting this back gives:
\[
\eta(m \circ h) = \eta(m) + \eta(h) - c^* g^* (\eta(h)).
\]
Observe that $g^*(\eta(h)) \in \operatorname{Im} (\tau_*) \cap [S^{4n-1+k}, G/O]$. Consequently, $g^* \eta(h) = \eta_S([\Sigma_x^{4n-1+k}])$ for some normal invariant of a sphere. We can then modify the composition iteratively:
\begin{align*}
\eta(m \circ h \mathbin{\#} h_{\Sigma_x^{4n-1+k}}) &= \eta(m) + \eta(h) \\
&= c^*(x_{E'}) + (\pi_V^*(\alpha) + \pi_S^*(\beta)).
\end{align*}

This explicitly demonstrates that the element is realized by the normal invariants of the following full tangential homotopy equivalence:
\[
m \circ h \ \# \ h_{\Sigma_{x_{0}}} : (V \# \Sigma \times \Sigma^k) \# \Sigma_{x'}^{4n-1+k} \# \Sigma_x^{4n-1+k} \xrightarrow{h_{(\Sigma_{x'} \# \Sigma_{x})}} V \# \Sigma \times \Sigma^k \xrightarrow[(Id \times c_{\Sigma^k}) \circ (\varphi^{(\Sigma, g)} \times Id)]{h} V \times S^k \xrightarrow[P(g)]{m} V \times S^k
\]
Normal invariant of these composite maps fully realizes $c^*(E') + \bigl([V, G/O] \oplus [S^k, G/O]\bigr) \cap \operatorname{Im}(\tau^*)$. This proves part C.

\noindent\textbf{D.} We prove that $[c^*((i^*)^{-1}(J)) + \pi_V^{*}([V,G/O]) + \pi_S^{*}([S^k,G/O])] \cap \mathrm{Im}(\tau^*)$ is realised by normal invariants of tangential homotopy equivalences.
\begin{equation}\label{eq:5.13}\tag{5.13}
\begin{tikzcd}
\left[ \Sigma^k V, G/O \right] \arrow[r, "c^*"] \arrow[d, "i^*"'] & \left[ V \times S^k, G/O \right] \arrow[r] & \left[ V, G/O \right] \oplus \left[ S^k, G/O \right] \arrow[l, bend right, dashed, "\pi_V^*"'] \arrow[l, bend left, dashed, "\pi_S^*"] \\
\left[ S^{2n-1+k} \cup_2 e^{2n+k}, G/O \right] & &
\end{tikzcd}
\end{equation}
Let $x \in [c^*((i^*)^{-1}(J)) + \pi_V^{*}([V,G/O]) + \pi_S^{*}([S^k,G/O])] \cap \mathrm{Im}(\tau^*) \subset [V \times S^k, G/O] \cap \operatorname{Im} \tau_*$, where, $J = \operatorname{Im} \Omega_2$. 
Then $x$ decomposes as $x = c^*(x_J) + (\pi_V^*(\alpha) + \pi_S^*(\beta))$, such that $i^* x_J \in J$. 

Let $i^*(x_J) = i^*(x_{E'})$ for some $x_{E'} \in E'$, since $E'$ was chosen as a set of inverses for $J = \operatorname{Im}(\Omega_2)$. 
This implies that:
\[ x_J - x_{E'} = (q)^*(A_{x_J}) \]
Observe the following logical chain:
\begin{itemize}
    \item $x_{E'} \in E' \subset \operatorname{Im} \tau_*$ and $x_J \in \operatorname{Im} \tau_*$ implies $x_J - x_{E'} \in \operatorname{Im} \tau_*$.
    \item Hence, $x_J - x_{E'} \in \left[ \Sigma^k V, G/O \right] \cap \operatorname{Im} \tau_*$.
    \item Because $x_J - x_{E'} = q^*(A_{x_J})$, we deduce that $A_{x_J} \in \operatorname{Im} \tau_*$.
\end{itemize}
(This follows because all relevant short exact sequences split naturally; an element is in $\operatorname{Im}(\tau_*)$ if and only if its individual coordinates are).

Returning to $x = c^*(x_J) + (\pi_V^*(\alpha) + \pi_S^*(\beta))$, let us define  a new element:
$ x' = c^*(x_{E'}) + (\pi_V^*(\alpha) + \pi_S^*(\beta)) $.
Suppose $\eta(m' \circ h' \mathbin{\#} h_{\Sigma_{x'}}) = x'$. It is possible because of C. Then the difference $x - x'$ is given by:
\[ x - x' = c^*(x_J - x_{E'}) = c^*q^*(A_{x_J}) \]
Since $A_{x_J} \in \operatorname{Im}\bigl(\tau_*\colon [S^{4n-1+k}, SG] \to [S^{4n-1+k}, G/O]\bigr)$, there exists a homotopy sphere such that $\eta([\Sigma_x])=A_{x_{_J}}$(again it is possible because dimension of the $V \times S^k \neq 2^i-2$). We then compute:
\begin{align*}
    \eta(m' \circ h' \mathbin{\#} h_{\Sigma_{x'}} \mathbin{\#} h_{\Sigma_x}) 
    &= \eta(m' \circ h' \mathbin{\#} h_{\Sigma_{x'}}) + \eta(h_{\Sigma_x}) \\
    &= x' + (c \circ q)^* A_{x_J} \\
    &= x
\end{align*}

Thus, the element $x$ is explicitly realized by the normal invariants of the following composed tangential homotopy equivalences:
\[
(V \mathbin{\#} \Sigma \times \Sigma^k) \mathbin{\#} \Sigma_{x'}^{4n-1+k} \mathbin{\#} \Sigma_x^{4n-1+k} \xrightarrow{h_{(\Sigma_{x'} \mathbin{\#} \Sigma_{x})}} V \mathbin{\#} \Sigma \times \Sigma^k \xrightarrow{h} V \times S^k \xrightarrow[P_g]{m} V \times S^k
\]
where $h = (\mathrm{Id} \times c_{\Sigma^k}) \circ (\varphi^{(\Sigma, g)} \times \mathrm{Id})$, and the final composition simplifies to $m \circ h \mathbin{\#} h_{\Sigma_x}$. This completes the proof of statement~3).
\end{proof}

\begin{corollary}\label{cor:5.3}
If $\Omega_1$ and $\Omega_2$ are surjective onto $\mathrm{Im}(\tau_*)$ , then every smooth manifold tangentially homotopic to $V \times S^k$ is almost diffeomorphic to $V \# \Sigma \times \Sigma^k$.
\end{corollary}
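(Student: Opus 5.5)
The plan is to reduce the corollary to Theorem~\ref{thm:5.1}(3), using the principle stated at the start of \S5. Any smooth closed $N$ with a tangential homotopy equivalence $q\colon N\to V\times S^k$ is almost diffeomorphic to $N'$ whenever some $b\colon N'\to V\times S^k$ in $hS(V\times S^k)$ satisfies $\eta(b)=\eta(q)$. So it suffices to show two things: every element of $\operatorname{Im}(\eta)\cap\operatorname{Im}(\tau_*)$ is hit by one of the explicit composites built in parts C and D of the proof of Theorem~\ref{thm:5.1}, and the domain of each such composite has the asserted form.

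First I will show that, under the surjectivity of $\Omega_2$ onto $\operatorname{Im}(\tau_*)$, the subgroup handled in Theorem~\ref{thm:5.1}(3) is all of $[V\times S^k,G/O]\cap\operatorname{Im}(\tau_*)$. I will use the split exact sequence (5.11), $[V\times S^k,G/O]\cong c^*[\Sigma^kV,G/O]\oplus\pi_V^*[V,G/O]\oplus\pi_S^*[S^k,G/O]$, together with the exact sequence $[S^{4n-1+k},G/O]\xrightarrow{q^*}[\Sigma^kV,G/O]\xrightarrow{l^*}[S^{2n-1+k}\cup_2e^{2n+k},G/O]$. An element of $\operatorname{Im}(\tau_*)$ has all coordinates in $\operatorname{Im}(\tau_*)$, since the splittings are natural as used in part D. Its $\Sigma^kV$-coordinate $x_J$ then has $l^*x_J\in\operatorname{Im}(\tau_*)$. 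Surjectivity of $\Omega_2$ onto $\operatorname{Im}(\tau_*)$ gives $l^*x_J\in J=\operatorname{Im}\Omega_2$, so $x_J\in(l^*)^{-1}(J)$. This is exactly the hypothesis of part D. The hypothesis on $\Omega_1$ is the one already required to construct the inverse set $E'$ in part A.

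Applying part D, every such element equals $\eta$ of a composite
\[
(V\mathbin{\#}\Sigma\times\Sigma^k)\mathbin{\#}\Sigma_{x'}\mathbin{\#}\Sigma_x\longrightarrow V\mathbin{\#}\Sigma\times\Sigma^k\xrightarrow{\;h\;}V\times S^k\xrightarrow{\;P_g\;}V\times S^k .
\]
Here $\Sigma\in\Theta_{4n-1}$ comes from Theorem~\ref{thm:3.11}, $\Sigma^k\in\Theta_k$ is realisable because $k\neq2^i-2$, and $\Sigma_{x'},\Sigma_x$ are homotopy spheres of the top dimension, realisable because the total dimension is $\neq 2^i-2$. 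Each factor is a tangential homotopy equivalence (pinch maps, products, and $h_\Sigma$), so the composite lies in $hS^t$, as needed.

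I expect the main obstacle to be this last identification. Connected sum with a top-dimensional homotopy sphere changes the manifold only within a disc, so $(V\mathbin{\#}\Sigma\times\Sigma^k)\mathbin{\#}\Sigma_{x'}\mathbin{\#}\Sigma_x$ is almost diffeomorphic to $V\mathbin{\#}\Sigma\times\Sigma^k$ via the identity off a disc. Composing with the almost diffeomorphism $N\to N'$ given by Novikov's theorem then completes the argument. The point I would check carefully is that the composite of two almost diffeomorphisms is again an almost diffeomorphism. I would arrange this by isotoping the exceptional points so that they coincide, using the disc theorem, before composing.
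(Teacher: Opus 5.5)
Your proposal is correct and follows essentially the same route as the paper. The paper leaves the corollary as an immediate consequence of Theorem~\ref{thm:5.1}(3): the $\Omega_1$ hypothesis builds $E'$ (part A), surjectivity of $\Omega_2$ makes the realised subgroup in part D exhaust $[V\times S^k,G/O]\cap\operatorname{Im}(\tau_*)$, and the Novikov/$bP$ principle from the start of \S5 then applies. You spell out steps the paper leaves implicit: the coordinatewise argument, that connected sum with top-dimensional homotopy spheres preserves the almost-diffeomorphism type, and transitivity of almost diffeomorphism.
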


\begin{remark}\label{rem:5.4}
There is limited information on homotopy groups of Moore spaces. While the first condition is often satisfied, it is unclear when $\Omega_2$ is surjective onto $Im(\tau_*)$. Nevertheless, the theorem remains useful for finding all inverses and completing the classification. One possible approach is to find representatives of generators of $([S^{2n-1+k}\cup_{e} e^{2n+k},\, G/O] \cap \mathrm{Im}(\tau^*))/ \mathrm{Im}(\Omega_2)$ inside $Im (\eta)$, find their $\eta$-inverses, and show that all of $[S^{2n-1+k}\cup_{e} e^{2n+k},\, G/O] \cap \mathrm{Im}(\tau^*)$ is realised by normal invariants of suitable compbinations.
\end{remark}

\subsection{When $V = V_{2n+2,2}$}

When $V = V_{2n+2,2} \simeq (S^{2n} \vee S^{2n+1}) \cup_\rho e^{4n+1}$, we have analogous results for $V \times S^k$. Let $\Psi_1 \colon \pi_{4n+1+k}(S^{2n}) \to [S^{2n+1+k},SG] \to [S^{2n+1+k},G/O]$ and $\Psi_2 \colon \pi_{4n+1+k}(S^{2n+1}) \to [S^{2n+k},SG] \to [S^{2n+k},G/O]$. Set $J_1 = \mathrm{Im}(\Psi_1)$ and $J_2 = \mathrm{Im}(\Psi_2)$. By the same methods as above:

\begin{theorem}\label{thm:5.5}
We can realise $([S^{4n+1+k},G/O] \oplus J_1 \oplus J_2 \oplus [V,G/O] \oplus [S^k, G/O]) \cap \mathrm{Im}(\tau_*)$ by normal invariants of tangential homotopy equivalences of the form
\begin{equation}\tag{5.14}\label{eq:5.14}
(V \# \Sigma \times \Sigma^k) \# \Sigma_{x'}^{4n+1+k} \# \Sigma_x^{4n+1+k} \xrightarrow{h_{(\Sigma_{x'} \# \Sigma_{x})}} V \# \Sigma \times \Sigma^k \xrightarrow[(\mathrm{Id} \times c_{\Sigma^k}) \circ (\varphi^{(\Sigma,g)} \times \mathrm{Id})]{h} V \times S^k \xrightarrow[P_g]{m} V \times S^k.
\end{equation}
\end{theorem}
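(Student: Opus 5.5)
The proof will mirror that of Theorem~\ref{thm:5.1}(3), with the Moore space $S^{2n-1}\cup_2 e^{2n}$ replaced by the wedge $S^{2n}\vee S^{2n+1}$. This makes several steps easier, because the cofibre sequences split. Write $V\times S^k$ with quotient $c\colon V\times S^k\to\Sigma^kV$. Then there is a split short exact sequence
\[
0\to[\Sigma^kV,G/O]\xrightarrow{c^*}[V\times S^k,G/O]\to[V,G/O]\oplus[S^k,G/O]\to 0,
\]
and $[\Sigma^kV,G/O]$ is itself an extension of $[S^{2n+k},G/O]\oplus[S^{2n+1+k},G/O]$ by $[S^{4n+1+k},G/O]$. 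Since $\rho$ is stably trivial, this second extension splits after stabilisation.

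\textbf{Step 1: the $J_1\oplus J_2$ part.} I will compute normal invariants of pinch maps. For $a\in\pi_{4n+1+k}(S^{2n})$, take $\alpha=\iota\circ l\circ a$, where $\iota\colon V\hookrightarrow V\times S^k$ is the inclusion. Run the Crowley diagrams~\eqref{eq:5.2}--\eqref{eq:5.4} with $S^{2n}$ in place of $S^{2n-1}$. As in Lemma~\ref{lem:3.1}, the tangential normal invariant has the form $c^*(f_a)$. Here $f_a$ vanishes on $S^k$, because the map factors through higher-dimensional spheres. The restriction of $f_a$ to the $S^{2n+1+k}$ cell of $\Sigma^kV$ is $\pm\{a\}$, since homology duality forces the bottom-left map to be $\pm$ the inclusion, and its restriction to $S^{2n+k}$ is zero. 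Symmetrically, take $b\in\pi_{4n+1+k}(S^{2n+1})$ and factor through the section $s$, as in Lemma~\ref{lem:3.2}. This gives $c^*(f_b)$ with restriction $(\pm\{b\},\ \text{$0$ or }\pm\{b\circ\eta\})$. Apply $H$ and then $\tau_*$. By the argument of Theorem~\ref{thm:3.3}, compositions $P_{\alpha}\circ P_{\beta}$ realise every element of $J_1\oplus J_2$ on the $(2n+k)$- and $(2n+1+k)$-cells. The cross term $c^*(\cdot)^*$ in the composition formula dies on the lower skeleton, as in \eqref{eq:3.16}, and the $\beta\circ\eta$ correction can be absorbed by re-choosing $\alpha$. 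There is no Moore-space extension problem here. Therefore the set $E'$ of lifts in $[\Sigma^kV,G/O]$ is obtained directly, and no surjectivity hypothesis such as ``$\Omega_1$ onto'' is needed in this step.

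\textbf{Step 2: the remaining parts.} The $[V,G/O]\oplus[S^k,G/O]$ part is handled verbatim as in Part~B of Theorem~\ref{thm:5.1}. Use $\varphi^{(\Sigma,g)}\times\mathrm{Id}$ from Theorem~\ref{thm:3.4}, which realises all of $\operatorname{Im}\tau_*\subset[V,G/O]$. Then compose with $\mathrm{Id}\times c_{\Sigma^k}$; the exotic sphere $\Sigma^k$ exists because $k\neq 2^i-2$. The product formula from~\cite{luck} gives $\pi_V^*\eta(\varphi)+\pi_S^*\eta(c_{\Sigma^k})$. Next, combine with the pinch map $m=P_g$ exactly as in Part~C. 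The composition formula produces the error term $-c^*g^*\eta(h)$. This term lies in $[S^{4n+1+k},G/O]\cap\operatorname{Im}\tau_*$, so it is the normal invariant of a homotopy sphere, using $\dim\neq 2^i-2$. Finally, any discrepancy in the top cell $[S^{4n+1+k},G/O]$ is corrected by a further connected sum $\#\,h_{\Sigma_x}$, as in Part~D. This yields the maps of the form~\eqref{eq:5.14}.

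\textbf{Expected obstacle.} I expect the main difficulty in Step~1 to be the sign and $\eta$-ambiguity in the $S^{2n+1+k}$-component coming from the section-factored pinch map. I would resolve it as Theorem~\ref{thm:3.4} does, by replacing $s\circ b$ with $s\circ b+l\circ\eta\circ b$. I must also check that the top-cell splitting is compatible with $\operatorname{Im}\tau_*$ coordinatewise; I would verify this using the natural splitting argument already used in Part~D.
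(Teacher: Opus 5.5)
Your proposal is correct and is essentially the paper's argument. The paper proves Theorem 5.5 only by saying it follows ``by the same methods as above'': it reruns Parts A--D of the proof of Theorem 5.1(3), using the wedge $S^{2n}\vee S^{2n+1}$ and the pinch-map computations of Lemmas 3.1, 3.2 and Theorem 3.3 in place of the Moore space, which is exactly what you spell out. Your remark that the wedge removes the extension problem is also consistent with the hypothesis-free form of the statement: no analogue of the $\Omega_1$-surjectivity hypothesis is needed, and the $\eta\circ b$ correction is absorbed into $J_1$.
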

\begin{corollary}\label{cor:5.6}
If $\Psi_1$ and $\Psi_2$ are surjective onto $Im(\tau_*)$, then every smooth manifold tangentially homotopic to $V \times S^k$ is almost diffeomorphic to $V \# \Sigma \times \Sigma^k$. In particular, any smooth manifold tangentially homotopy equivalent to $V_{12,2} \times S^3$, $V_{16,2} \times S^3$, or $V_{10,2} \times S^5$ looks like $V \# \Sigma \times S^k$.
\end{corollary}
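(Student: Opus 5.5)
The plan is to deduce Corollary~\ref{cor:5.6} from Theorem~\ref{thm:5.5} in the same way Corollary~\ref{cor:5.3} follows from Theorem~\ref{thm:5.1}. Then, for the three named examples, I will check the surjectivity hypotheses by hand.

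\textbf{Step 1: cover all of $\operatorname{Im}(\tau_*)$.} Use the split exact sequence
\[
0\to[S^{4n+1+k},G/O]\to[\Sigma^kV,G/O]\to[S^{2n+k},G/O]\oplus[S^{2n+1+k},G/O]\to 0
\]
coming from the CW structure $\Sigma^kV\simeq(S^{2n+k}\vee S^{2n+1+k})\cup e^{4n+1+k}$. Here $\rho$ is stably trivial, so the sequence splits. Combined with $c^*$ and the splitting $\pi_V^*,\pi_S^*$ of~\eqref{eq:5.11}, this gives
\[
[V\times S^k,G/O]\cong[S^{4n+1+k},G/O]\oplus[S^{2n+k},G/O]\oplus[S^{2n+1+k},G/O]\oplus[V,G/O]\oplus[S^k,G/O].
\]
I will argue, as in part~D, that an element lies in $\operatorname{Im}\tau_*$ exactly when each of its coordinates does, since all the sequences split naturally and $\tau_*$ respects the splitting. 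Under the hypothesis, $J_1$ and $J_2$ then contain all of the $\operatorname{Im}\tau_*$ parts of the two middle summands. Theorem~\ref{thm:5.5} therefore realises every element of $\operatorname{Im}(\tau_*)$ as $\eta$ of a composite of the form~\eqref{eq:5.14}.

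\textbf{Step 2: pass from normal invariants to almost diffeomorphism.} Let $N$ be tangentially homotopy equivalent to $V\times S^k$ via $f$. Then $\eta(f)=\tau_*\eta^t(f)\in\operatorname{Im}\tau_*$, so $\eta(f)=\eta(F)$ for some $F$ of the form~\eqref{eq:5.14}. By the surgery exact sequence, $f$ and $F$ differ by the action of $L_{m+1}(\mathbb Z)$, which acts through connected sum with a $bP$ sphere. Hence $N$ is almost diffeomorphic to the source of $F$, namely $(V\#\Sigma\times\Sigma^k)\#\Sigma'$. This is the Novikov argument used at the start of \S5. Since connected sum with a homotopy sphere does not change the almost diffeomorphism type, $N$ is almost diffeomorphic to $V\#\Sigma\times\Sigma^k$. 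The same reasoning lets me absorb $\Sigma^k$ as well, giving the form $V\#\Sigma\times S^k$ stated for the examples.

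\textbf{Step 3: the three examples.} This is the main obstacle. For $V_{12,2}\times S^3$ we have $2n=10$, so I must show that $\Psi_1\colon\pi_{24}(S^{10})\to\pi^s_{14}$ and $\Psi_2\colon\pi_{24}(S^{11})\to\pi^s_{13}$ hit $\operatorname{Im}\tau_*$. For $V_{16,2}\times S^3$ ($2n=14$) the relevant maps are $\pi_{32}(S^{14})\to\pi^s_{18}$ and $\pi_{32}(S^{15})\to\pi^s_{17}$. For $V_{10,2}\times S^5$ ($2n=8$) they are $\pi_{22}(S^8)\to\pi^s_{14}$ and $\pi_{22}(S^9)\to\pi^s_{13}$. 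In each case the source lies at or beyond the stable range only marginally, so I will use Toda's tables to check that the suspension onto $\pi^s_*$ is surjective, or at least surjective onto the cokernel of $J$, which is what maps to $\operatorname{Im}\tau_*\subset[S^j,G/O]$. The first thing I would try is to argue that $\operatorname{Im}\tau_*\cong\pi^s_j/\operatorname{Im}J$ and then exhibit unstable representatives of the generators of $\operatorname{coker}J$: $\sigma^2,\kappa$ in stem~$14$; $\nu\sigma$, $\eta\bar\mu$ and $\eta\eta^*$ in stems $17,18$; and stem~$13$ is zero mod $\operatorname{Im}J$. This requires checking that the relevant Toda elements desuspend to $S^{10},S^{14},S^8$ and so on. The desuspension check, for example for $\sigma^2$ onto $S^{10}$, is where the details could fail.
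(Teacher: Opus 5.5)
Your Steps 1 and 2 follow the paper's own deduction. Theorem~\ref{thm:5.5}, together with the hypothesis on $\Psi_1,\Psi_2$, exhausts $\operatorname{Im}\tau_*$, using the same coordinatewise description of $\operatorname{Im}\tau_*$ as in part~D. Then the $bP$/Novikov argument from the start of \S5 converts equal normal invariants into an almost diffeomorphism with $(V\#\Sigma\times\Sigma^k)\#\Sigma'$. For the general statement this is all the paper does, and it is fine.

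The problems are in how you finish. Your claim that ``the same reasoning'' absorbs $\Sigma^k$ is not valid. $\Sigma^k$ is a product factor, not a connected summand. It contributes $\pi_S^*\eta(\Sigma^k)$ to the normal invariant, which lies in the $[S^k,G/O]$ summand and not in the image of the top-cell collapse. So the argument that disposes of $\Sigma'$ says nothing about it. For the three examples the form $V\#\Sigma\times S^k$ is still correct, but for a different reason: $k=3,5$ and $\Theta_3=\Theta_5=0$. This is what Remark~\ref{rem:5.7} uses, in the form $[S^3,G/O]=[S^5,G/O]=0$.

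Your Step 3 goes beyond the paper, which only cites Toda. However, the stable-stem bookkeeping is wrong in ways that would leave the verification incomplete.

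\emph{Stem $13$.} It is not zero modulo $\operatorname{Im}J$. Since $\pi_{13}(SO)=0$, $J$ vanishes there, while $\pi^s_{13}\cong\mathbb{Z}/3\{\alpha_1\beta_1\}$. So $\operatorname{Im}\tau_*\subset\pi_{13}(G/O)$ is $\mathbb{Z}/3$. Surjectivity of $\Psi_2$ for $V_{12,2}\times S^3$ and $V_{10,2}\times S^5$ therefore requires $\alpha_1\beta_1$ to come from $\pi_{24}(S^{11})$ and $\pi_{22}(S^9)$. This does hold, since $\alpha_1$ and $\beta_1$ are born on low spheres, but your plan skips it.

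\emph{Stem $17$.} The class $\nu\sigma$ lies in stem $10$ and is stably zero, so it is not a target. Here $J(\pi_{17}SO)$ is generated by $\eta^2\rho$, and $\bar\mu$ is not in it. Hence $\operatorname{coker}J_{17}\cong(\mathbb{Z}/2)^3$ on $\eta\eta^*$, $\nu\kappa$, $\bar\mu$. These do come from $S^{15}$, as $\eta_{15}\eta^*_{16}$, $\nu_{15}\kappa_{18}$ and $\bar\mu_{15}$.

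\emph{Stem $18$.} Since $\pi_{18}(SO)=0$, you need all of $\pi^s_{18}\cong\mathbb{Z}/8\{\nu^*\}\oplus\mathbb{Z}/2\{\eta\bar\mu\}$ from $\pi_{32}(S^{14})$. The real desuspension issue is the $\mathbb{Z}/8$ generator, not $\sigma^2$. Toda's named representative $\nu^*_{16}$ sits on $S^{16}$, so you must produce a different representative of a generator on $S^{14}$. An element of the Toda bracket $\{\sigma_{12},\nu_{19},\sigma_{22}\}\subset\pi_{30}(S^{12})$ is the natural candidate.

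\emph{Stem $14$.} This case is immediate: $\sigma_8\sigma_{15}\in\pi_{22}(S^8)$ and $\kappa_7\in\pi_{21}(S^7)$.

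Finally, none of the source groups is ``marginally'' stable. Stem $2n+1+k$ on $S^{2n}$ is always far outside the stable range, which is exactly why explicit unstable representatives are needed.
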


\begin{remark}\label{rem:5.7}
In many cases the conditions are satisfied ~\cite{toda} ~: for example, $V_{12,2} \times S^3$, $V_{16,2} \times S^3$, $V_{10,2} \times S^5$. By Corollary~\ref{cor:5.6}, all smooth manifolds tangentially homotopic to $V \times S^k$ are then homeomorphic to $V \# \Sigma \times S^k$. Note that $[S^3,G/O] = [S^5,G/O] = 0$~\cite[Remark~9.22]{ranicki}. There may be many further cases in high dimensions where this applies, but a complete check is limited by our knowledge of homotopy groups of spheres. Even without surjectivity, the theorem is still useful for finding all inverses and completing the classification.
\end{remark}

\section*{Acknowledgements}
The author is deeply grateful to his doctoral advisor Dr.\ Ramesh Kasilingam (IIT Madras) for many valuable discussions throughout this work.
The first part of this research was supported by CSIR-NET fellowship.


\begin{thebibliography}{99}

\bibitem{adams:J}
J.~F.~Adams,
\emph{On the groups $J(X)$, I--IV},
Topology \textbf{2} (1963), 181--195; \textbf{3} (1965), 137--171; \textbf{3} (1965), 193--222; \textbf{5} (1966), 21--71.

\bibitem{browder}
W.~Browder,
\emph{Surgery on Simply-Connected Manifolds},
Ergebnisse der Mathematik und ihrer Grenzgebiete, Band~65,
Springer-Verlag, New York--Heidelberg, 1972.

\bibitem{crowley:kervaire}
D.~Crowley and I.~Hambleton,
\emph{Finite group actions on Kervaire manifolds},
Adv.\ Math.\ \textbf{283} (2015), 88--129. \texttt{arXiv:1305.6546}.

\bibitem{crowley:7mflds}
D.~Crowley,
\emph{The classification of highly connected manifolds in dimensions $7$ and $15$},
Ph.D.\ thesis, Indiana University, 2002. \texttt{arXiv:math/0203253}.

\bibitem{desapio}
R.~De~Sapio,
\emph{Almost diffeomorphisms of manifolds},
Pacific J.\ Math.\ \textbf{26} (1968), 47--56.

\bibitem{funcspaces}
E.~H.~Spanier,
\emph{Function spaces and duality},
Ann.\ of Math.\ (2) \textbf{70} (1959), 338--378.

\bibitem{james}
I.~M.~James,
\emph{The Topology of Stiefel Manifolds},
London Math.\ Soc.\ Lecture Note Ser., vol.~24,
Cambridge University Press, Cambridge, 1976.

\bibitem{luck}
W.~L\"uck,
\emph{A basic introduction to surgery theory},
ICTP Lecture Notes, Abdus Salam ICTP, Trieste, 2002.
Available at \texttt{https://him-lueck.uni-bonn.de/data/ictp.pdf}.

\bibitem{madsen}
I.~Madsen, L.~R.~Taylor, and B.~Williams,
\emph{Tangential homotopy equivalences},
Comment.\ Math.\ Helv.\ \textbf{55} (1980), 445--484.

\bibitem{may}
J.~P.~May,
\emph{A Concise Course in Algebraic Topology},
Chicago Lectures in Mathematics,
University of Chicago Press, Chicago, 1999.

\bibitem{nomura}
Y.~Nomura,
\emph{Self homotopy equivalences of Stiefel manifolds $W_{m,2}$ and $V_{m,2}$},
Osaka J.\ Math.\ \textbf{20} (1983), 79--93.

\bibitem{ottenberger}
S.~Ottenburger,
\emph{Simply and tangentially homotopy equivalent but non-homeomorphic homogeneous manifolds},
Asian J.\ Math.\ \textbf{27} (2023), no.~1, 57--76. \texttt{arXiv:1102.5708}.

\bibitem{ravenel}
D.~C.~Ravenel,
\emph{Complex Cobordism and Stable Homotopy Groups of Spheres},
Pure and Applied Mathematics, vol.~121,
Academic Press, Orlando, 1986.

\bibitem{ranicki}
A.~Ranicki,
\emph{Algebraic and Geometric Surgery},
Oxford Mathematical Monographs,
Oxford University Press, Oxford, 2002.

\bibitem{toda}
H.~Toda,
\emph{Composition Methods in Homotopy Groups of Spheres},
Ann.\ of Math.\ Studies, vol.~49,
Princeton University Press, Princeton, 1962.

\bibitem{homotopy5mfld}
P.~Li and Z.~Zhu,
\emph{The homotopy decomposition of the suspension of a non-simply-connected five-manifold},
Proc.\ Roy.\ Soc.\ Edinburgh Sect.\ A \textbf{156} (2026), no.~1, 93--121. \texttt{doi:10.1017/prm.2024.49}, \texttt{arXiv:2311.16642}.

\bibitem{hatcher}
A.~Hatcher,
\emph{Algebraic Topology},
Cambridge University Press, Cambridge, 2002.
Available at \texttt{https://pi.math.cornell.edu/\textasciitilde hatcher/AT/ATpage.html}.

\bibitem{kervaire:milnor}
M.~A.~Kervaire and J.~W.~Milnor,
\emph{Groups of homotopy spheres, I},
Ann.\ of Math.\ (2) \textbf{77} (1963), 504--537.

\bibitem{milnor:stasheff}
J.~W.~Milnor and J.~D.~Stasheff,
\emph{Characteristic Classes},
Ann.\ of Math.\ Studies, vol.~76,
Princeton University Press, Princeton, 1974.

\bibitem{spanier}
E.~H.~Spanier,
\emph{Algebraic Topology},
McGraw-Hill, New York, 1966.

\bibitem{wall}
C.~T.~C.~Wall,
\emph{Surgery on Compact Manifolds},
London Math.\ Soc.\ Monogr., vol.~1,
Academic Press, London, 1970;
2nd ed., Math.\ Surveys Monogr., vol.~69, Amer.\ Math.\ Soc., Providence, 1999.

\bibitem{alexander}
J.~W.~Alexander,
\emph{On the deformation of an $n$-cell},
Proc.\ Nat.\ Acad.\ Sci.\ \textbf{9} (1923), 406--407.

\bibitem{bks}
I.~Belegradek, S.~Kwasik, and R.~Schultz,
\emph{Codimension two souls and cancellation phenomena},
Adv.\ Math.\ \textbf{275} (2015), 1--46. \texttt{arXiv:0912.4874}.

\end{thebibliography}
\end{document}